\newlength{\myarrowsize} 
\newenvironment{diagram}[2]{%
\begin{equation}%
\begin{tikzpicture}[>=cmto,baseline=(current bounding box.center),%
	to/.style={->,font=\scriptsize,cap=round},%
	into/.style={cmhook->,font=\scriptsize,cap=round},%
	onto/.style={-cmonto,font=\scriptsize,cap=round},%
	math/.style={matrix of math nodes, row sep=#2, column sep=#1,%
		text height=1.5ex, text depth=0.25ex}]%
}{%
\end{tikzpicture}%
\end{equation}%
\ignorespacesafterend%
}
\newenvironment{diagram*}[2]{%
\[%
\begin{tikzpicture}[>=cmto,baseline=(current bounding box.center),%
	to/.style={->,font=\scriptsize,cap=round},%
	into/.style={cmhook->,font=\scriptsize,cap=round},%
	onto/.style={-cmonto,font=\scriptsize,cap=round},%
	math/.style={matrix of math nodes, row sep=#2, column sep=#1,%
		text height=1.5ex, text depth=0.25ex}]%
}{%
\end{tikzpicture}%
\]%
\ignorespacesafterend%
}
\newcommand{\MHM}{\operatorname{MHM}}
\newcommand{\Dmod}{\mathcal{D}}
\newcommand{\Mmod}{\mathcal{M}}
\newcommand{\Nmod}{\mathcal{N}}
\newcommand{\derR}{\mathbb{R}}
\newcommand{\derL}{\mathbb{L}}
\newcommand{\decal}[1]{\lbrack #1 \rbrack}
\newcommand{\ltriangle}[4][]%
{\begin{diagram}[#1]%
	{#2} &\rTo& {#3} &\rTo& {#4} &\rTo& {#2 \decal{1}}%
\end{diagram}}
\newcommand{\shH}{\mathcal{H}}
\newcommand{\abs}[1]{\lvert #1 \rvert}
\newcommand{\tensor}{\otimes}
\newcommand{\shHom}{\mathcal{H}\hspace{-1pt}\mathit{om}}
\newcommand{\ZZ}{\mathbb{Z}}
\newcommand{\QQ}{\mathbb{Q}}
\newcommand{\CC}{\mathbb{C}}
\newcommand{\HH}{\mathbb{H}}
\newcommand{\menge}[2]{\bigl\{ \thinspace #1 \thinspace\thinspace \big\vert%
\thinspace\thinspace #2 \thinspace \bigr\}}
\newcommand{\Menge}[2]{\Bigl\{ \thinspace #1 \thinspace\thinspace \Big\vert%
\thinspace\thinspace #2 \thinspace \Bigr\}}
\DeclareMathOperator{\im}{im}
\DeclareMathOperator{\Spec}{Spec}
\DeclareMathOperator{\id}{id}
\DeclareMathOperator{\Supp}{Supp}
\DeclareMathOperator{\codim}{codim}
\DeclareMathOperator{\rat}{rat}
\DeclareMathOperator{\Sym}{Sym}
\DeclareMathOperator{\gr}{gr}
\DeclareMathOperator{\DR}{DR}
\newcommand{\define}[1]{\emph{#1}}
\newcommand{\lie}[2]{\lbrack #1, #2 \rbrack}
\newcommand{\shf}[1]{\mathscr{#1}}
\newcommand{\OX}{\shf{O}_X}
\newcommand{\argbl}{-}
\def\overbar#1#2#3{{%
	\setbox0=\hbox{\displaystyle{#1}}%
	\dimen0=\wd0
	\advance\dimen0 by -#2 
	\vbox {\nointerlineskip \moveright #3 \vbox{\hrule height 0.3pt width \dimen0}%
		\nointerlineskip \vskip 1.5pt \box0}%
}}
\newcommand{\into}{\hookrightarrow}
\DeclareMathOperator{\GL}{GL}
\newcommand{\pil}{\pi_{\ast}}
\newcommand{\fu}{f^{\ast}}
\newcommand{\fl}{f_{\ast}}
\newcommand{\iu}{i^{\ast}}
\newcommand{\il}{i_{\ast}}
\newcommand{\pl}{p_{\ast}}
\newcommand{\shF}{\shf{F}}
\newcommand{\shE}{\shf{E}}
\newcommand{\shO}{\shf{O}}
\let\@@seccntformat\@seccntformat
\renewcommand*{\@seccntformat}[1]{%
  \expandafter\ifx\csname @seccntformat@#1\endcsname\relax
    \expandafter\@@seccntformat
  \else
    \expandafter
      \csname @seccntformat@#1\expandafter\endcsname
  \fi
    {#1}%
}
\newcommand*{\@seccntformat@subsection}[1]{%
  \textbf{\csname the#1\endcsname.}
}
\let\@paragraph\paragraph
\renewcommand*{\paragraph}[1]{%
	\vspace{0.3\baselineskip}%
	\@paragraph{\textit{#1}}%
}
\newcommand{\subsecref}[1]{\S\ref{#1}}
\newtheorem{theorem}[equation]{Theorem}
\newtheorem*{theorem*}{Theorem}
\newtheorem{lemma}[equation]{Lemma}
\newtheorem*{lemma*}{Lemma}
\newtheorem{corollary}[equation]{Corollary}
\newtheorem{proposition}[equation]{Proposition}
\newtheorem*{proposition*}{Proposition}
\theoremstyle{definition}
\newtheorem{definition}[equation]{Definition}
\newtheorem*{definition*}{Definition}
\theoremstyle{remark}
\newtheorem*{question}{Question}
\newtheorem{example}[equation]{Example}
\newtheorem*{example*}{Example}
\newtheorem*{note}{Note}
\theoremstyle{plain}
\let\old@caption\caption
\renewcommand*{\caption}[1]{%
	\setcounter{figure}{\value{equation}}%
	\stepcounter{equation}%
	\old@caption{#1}\relax%
}
\newcounter{thmA}
\newtheorem{theorem-intro}[thmA]{Theorem}
\newcommand{\OA}{\shf{O}_A}
\newcommand{\OAh}{\shf{O}_{\Ah}}
\newcommand{\OAsh}{\shf{O}_{\Ash}}
\newcommand{\OBsh}{\shf{O}_{\Bsh}}
\newcommand{\Ash}{A^{\natural}}
\newcommand{\Bsh}{B^{\natural}}
\newcommand{\fsh}{f^{\natural}}
\newcommand{\fp}{f_{+}}
\newcommand{\fsi}{f^{+}}
\newcommand{\Dbcoh}{\mathrm{D}_{\mathit{coh}}^{\mathit{b}}}
\newcommand{\Db}{\mathrm{D}^{\mathit{b}}}
\newcommand{\Dtcoh}[1]{\mathrm{D}_{\mathit{coh}}^{#1}}
\newcommand{\pDtcoh}[2]{ {^{#1}} \Dtcoh{#2}}
\newcommand{\pCoh}[2]{ {^{#1}} \mathrm{Coh}{#2}}
\newcommand{\piDtc}[1]{ {^\pi} \mathrm{D}_c^{#1}}
\newcommand{\mCoh}{ {^m} \mathrm{Coh}}
\newcommand{\mDtcoh}[1]{ {^m} \Dtcoh{#1}}
\newcommand{\Dbh}{\Db_{\mathit{h}}}
\newcommand{\Dbc}{\Db_{\mathit{c}}}
\newcommand{\Dbrh}{\Db_{\mathit{rh}}}
\newcommand{\Dth}[1]{\mathrm{D}_h^{#1}}
\renewcommand{\derR}{\mathbf{R}}
\renewcommand{\derL}{\mathbf{L}}
\newcommand{\Ltensor}{\stackrel{\derL}{\tensor}}
\renewcommand{\argbl}{-}
\renewcommand{\shHom}{\mathcal{H}\mathit{om}}
\newcommand{\Ah}{\widehat{A}}
\DeclareMathOperator{\Pic}{Pic}
\newcommand{\OmA}[1]{\Omega_A^{#1}}
\newcommand{\DA}{\mathbf{D}_A}
\newcommand{\shC}{\mathscr{C}}
\newcommand{\CH}{\operatorname{\mathit{CH}}}
\newcommand{\shA}{\mathcal{A}}
\newcommand{\mm}{\mathfrak{m}}
\DeclareMathOperator{\Char}{Char}
\newcommand{\CCrho}{\CC_{\rho}}
\newcommand{\Psh}{P^{\natural}}
\newcommand{\nablash}{\nabla^{\natural}}
\DeclareMathOperator{\FM}{FM}
\DeclareMathOperator{\FMt}{\widetilde{FM}}
\newcommand{\Cst}{\CC^{\ast}}
\newcommand{\shL}{\mathcal{L}}
\newcommand{\mmrho}{\mm_{\rho}}
\renewcommand{\shH}{\mathcal{H}}
\renewcommand{\HH}{\mathbf{H}}
\newcommand{\opp}{\mathit{opp}}
\newcommand{\shI}{\mathcal{I}}
\newcommand{\shT}{\mathscr{T}}
\renewcommand{\Dmod}{\mathscr{D}}
\newcommand{\Rmod}{\mathscr{R}}
\newcommand{\Pg}{\widetilde{P}}
\newcommand{\nablag}{\widetilde{\nabla}}
\newcommand{\QQb}{\bar{\QQ}}
\DeclareMathOperator{\Aut}{Aut}
\newcommand{\Asig}{A^{\sigma}}
\newcommand{\Asigsh}{(A^{\sigma})^{\natural}}
\newcommand{\Lsig}{L^{\sigma}}
\newcommand{\nablasig}{\nabla^{\sigma}}
\newcommand{\csig}{c_{\sigma}}
\newcommand{\Msig}{M_{\sigma}}
\newcommand{\Mmodsig}{\Mmod_{\sigma}}
\newenvironment{renumerate}{%
	\begin{enumerate}[label=(\roman{*}), ref=(\roman{*})]
}{%
	\end{enumerate}%
}
\newenvironment{aenumerate}{%
	\begin{enumerate}[label=(\alph{*}), ref=(\alph{*})]
}{%
	\end{enumerate}%
}
\newcommand{\Mmodt}{\widetilde{\Mmod}}
\newcommand{\ReesM}{\widetilde{R_F \Mmod}}
\begin{document}

\title[Holonomic complexes on abelian varieties]%
	{Holonomic complexes on abelian varieties, Part I}
\author{Christian Schnell}
\address{%
	Institute for the Physics and Mathematics of the Universe \\
	The University of Tokyo \\
	5-1-5 Kashiwanoha, Kashiwa-shi \\
	Chiba 277-8583, Japan
}

\email{christian.schnell@ipmu.jp}


\keywords{Abelian variety, D-module, holonomic complex, constructible complex,
Fourier-Mukai transform, cohomology support loci, perverse coherent sheaf}

\begin{abstract}
We study the Fourier-Mukai transform for holonomic $\Dmod$-modules on a
complex abelian variety. Among other things, we show that the cohomology support loci
of a holonomic complex are finite unions of translates of triple tori, the
translates being by torsion points for objects of geometric origin; and that the
standard t-structure on the derived category of holonomic complexes corresponds,
under the Fourier-Mukai transform, to a certain perverse coherent t-structure in the
sense of Kashiwara and Arinkin-Bezrukavnikov.
\end{abstract}
\maketitle

\section{Overview of the paper}

\subsection{Introduction}

This is the first in a series of papers about holonomic $\Dmod$-modules on complex
abelian varieties; the ultimate goal of the series is, in a nutshell, the
description of holonomic $\Dmod$-modules in terms of the Fourier-Mukai transform.

Let $A$ be a complex abelian variety, and let $\Dmod_A$ be the sheaf of linear
differential operators. The most basic examples of left $\Dmod_A$-modules are line
bundles $L$ with integrable connection $\nabla \colon L \to \Omega_A^1 \tensor L$.
Because $A$ is an abelian variety, the moduli space $\Ash$ of all such
is a quasi-projective algebraic variety of dimension $2 \dim A$. The main
idea in the study of $\Dmod_A$-modules is to exploit the fact that $\Ash$
is so big. 

One approach is to consider, for a left $\Dmod_A$-module $\Mmod$, the cohomology
groups (in the sense of $\Dmod$-modules) of the various twists $\Mmod \tensor_{\OA}
(L, \nabla)$. This information may be presented using the \define{cohomology
support loci} of $\Mmod$, which are the sets
\begin{equation} \label{eq:CSL-h}
	S_m^k(\Mmod) = \Menge{(L, \nabla) \in \Ash}{\dim \HH^k \Bigl( A, 
		\DR_A \bigl( \Mmod \tensor_{\OA} (L, \nabla) \bigr) \Bigr) \geq m}.
\end{equation}

Another way to present the information about cohomology of twists of $\Mmod$ is
through the \define{Fourier-Mukai transform} for algebraic $\Dmod_A$-modules, which was
introduced and studied by Laumon \cite{Laumon} and Rothstein \cite{Rothstein}. It is
an exact functor
\begin{equation} \label{eq:FM-intro}
	\FM_A \colon \Dbcoh(\Dmod_A) \to \Dbcoh(\OAsh),
\end{equation}
defined as the integral transform with kernel $(\Psh, \nablash)$, the tautological line
bundle with relative integrable connection on $A \times \Ash$.
As shown by Laumon and Rothstein, $\FM_A$ is an equivalence between the bounded
derived category
of coherent algebraic $\Dmod_A$-modules, and that of algebraic coherent sheaves on
$\Ash$. In essence, this means that an algebraic $\Dmod$-module on an
abelian variety can be recovered from the cohomology of its twists by line bundles
with integrable connection. 

Now the most interesting $\Dmod$-modules are clearly
the holonomic ones; recall that a $\Dmod$-module is \define{holonomic} if its
characteristic variety is a Lagrangian subset of the cotangent bundle. One reason is
that, via the Riemann-Hilbert correspondence, the category of regular holonomic
$\Dmod$-modules is equivalent to the category of perverse sheaves. The motivation for
this study is the following natural question:

\begin{question}
Let $\Dbh(\Dmod_A)$ denote the full subcategory of $\Dbcoh(\Dmod_A)$, consisting of
complexes with holonomic cohomology sheaves. What is the image of $\Dbh(\Dmod_A)$
under the Fourier-Mukai transform? In particular, what does the complex
of sheaves $\FM_A(\Mmod)$ look like when $\Mmod$ is a holonomic $\Dmod_A$-module?
\end{question}

In this paper, we prove several results about
cohomology support loci and Fourier-Mukai transforms of holonomic complexes of
$\Dmod_A$-modules. Among other things, we establish a fundamental structure theorem
for cohomology support loci, and show that the Fourier-Mukai transform of a holonomic
$\Dmod_A$-module satisfies certain codimension inequalities that very much resemble
the famous generic vanishing theorem of Green and Lazarsfeld \cite{GL1}. 

In fact, the similarities with generic vanishing theory are no accident. As explained
in \cite{PS}, generic vanishing theory is really a collection of statements about
certain holonomic $\Dmod$-modules on abelian varieties (namely those that are
obtained as direct images of structure sheaves of irregular smooth projective
varieties) and natural filtrations on them; quite surprisingly, it turns out that all
statements that do not involve the filtration are actually true for arbitrary
holonomic $\Dmod$-modules. This study should therefore be viewed as a continuation
and generalization of the work of Green-Lazarsfeld \cites{GL1,GL2}, Arapura \cite{Arapura},
and Simpson \cite{Simpson}.

\begin{note}
Before we proceed, a few remarks about related works may be helpful.
\begin{enumerate}
\item Some of the results in this paper have been announced in \cite{PS}*{\S26}.
\item The main object of \cite{PS} are filtered $\Dmod_A$-modules underlying
mixed Hodge modules on $A$, and in particular, the individual coherent sheaves in the
filtration. We do not prove any result of that type in this paper.
\item For regular holonomic $\Dmod$-modules (or equivalently, for perverse sheaves),
similar but generally weaker statements were also obtained by Kr\"amer and Weissauer
\cites{KW,Weissauer}. 
\item All our results here also hold, suitably interpreted, for
holonomic $\Dmod$-modules on compact complex tori. This will be explained elsewhere.
\end{enumerate}
\end{note}

In Part~II of the series, we will show (using the work of Mochizuki and Sabbah) that
the Fourier-Mukai transform of a semisimple holonomic $\Dmod_A$-module is locally
analytically represented by a complex with linear differentials. In Part~III, we
shall (hopefully) answer the above question, by identifying the image of the category of
holonomic $\Dmod_A$-modules with a suitably defined category of ``hyperk\"ahler perverse
sheaves'' on the noncompact hyperk\"ahler manifold $\Ash$.

\subsection{Results about constructible complexes}
\label{subsec:results-c}

Although the focus of this work is on holonomic $\Dmod$-modules on abelian varieties,
we shall begin by describing the main results in the more familiar setting of
constructible complexes. Proofs for all the theorems in this section may be found in
\subsecref{subsec:proofs}.

First a few words about the terminology. By a \define{constructible complex} on the
abelian variety $A$, we mean a complex $E$ of
sheaves of $\CC$-vector spaces, whose cohomology sheaves $\shH^i E$ are
constructible with respect to an algebraic (or equivalently, complex analytic)
stratification of $A$, and vanish for $i$ outside some bounded
interval. We denote by $\Dbc(\CC_A)$ the bounded derived category of constructible
complexes. A basic fact \cite{HTT}*{Section~4.5} is that the hypercohomology groups
$\HH^i(A, E)$ are finite-dimensional complex vector spaces for any $E \in
\Dbc(\CC_A)$.

Now let $\Char(A)$ be the space of rank one characters of the fundamental group; it is
also the moduli space for local systems of rank one, and for any character $\rho \colon
\pi_1(A) \to \Cst$, we denote the corresponding local system on $A$ by the symbol
$\CCrho$. It is easy to see that $E \tensor_{\CC} \CCrho$ is again constructible for
any $E \in \Dbc(\CC_A)$; we may therefore define the \define{cohomology support loci}
of $E \in \Dbc(\CC_A)$ to be the subsets
\begin{equation} \label{eq:CSL-c}
	S_m^k(E) = \Menge{\rho \in \Char(A)}{\dim \HH^k \bigl( A, E \tensor_{\CC}
		\CCrho \bigr) \geq m},
\end{equation}
for any pair of integers $k,m \in \ZZ$. Since the space of rank one characters is
very large -- its dimension is equal to $2 \dim A$ -- these loci
should contain a lot of information about the original constructible complex $E$, and
indeed they do.

Our first result is the following structure theorem for cohomology support loci.

\begin{theorem} \label{thm:c-linear}
Let $E \in \Dbc(\CC_A)$ be a constructible complex.
\begin{aenumerate}
\item Each $S_m^k(E)$ is a finite union of linear subvarieties of $\Char(A)$.
\label{en:c-linear-a}
\item If $E$ is a semisimple perverse sheaf of geometric origin \cite{BBD}*{6.2.4},
then these linear subvarieties are arithmetic.
\label{en:c-linear-b}
\end{aenumerate}
\end{theorem}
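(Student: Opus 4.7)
The plan is to reduce to the corresponding statement for regular holonomic complexes on $A$ via the Riemann--Hilbert correspondence, and then exploit the Fourier--Mukai transform. Given $E \in \Dbc(\CC_A)$, let $\Mmod \in \Dbh(\Dmod_A)$ be the regular holonomic complex with $\DR_A(\Mmod) \simeq E$. The monodromy map $\mu \colon \Ash \to \Char(A)$ is a surjective morphism of commutative complex algebraic groups (with discrete kernel), so it sends translates of algebraic subgroups to translates of algebraic subgroups, and torsion points to torsion points. Hence it suffices to prove that the analogous loci $S_m^k(\Mmod) \subseteq \Ash$ defined as in \eqref{eq:CSL-h} are finite unions of linear subvarieties of $\Ash$, and torsion translates in case~\ref{en:c-linear-b}. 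Using the Fourier--Mukai transform \eqref{eq:FM-intro}, the locus $S_m^k(\Mmod)$ may be identified with the $m$th Fitting support of the coherent $\OAsh$-module $\cohH^k \FM_A(\Mmod)$, so it is automatically a closed algebraic subvariety of $\Ash$; the substantive content is its linearity.

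Next, via the long exact sequences in hypercohomology coming from the distinguished triangles of the standard t-structure on $\Dbh(\Dmod_A)$, together with a standard inductive argument on the length of a composition series, one reduces to the case where $\Mmod$ is a simple regular holonomic $\Dmod_A$-module. Every such $\Mmod$ is the minimal extension of a simple integrable connection on a Zariski-open subset of an irreducible closed subvariety $Z \subseteq A$. Choosing a resolution $\pi \colon \widetilde Z \to Z$ and composing with the inclusion $Z \into A$, one obtains a projective morphism $f \colon \widetilde Z \to A$ from a smooth projective variety; the decomposition theorem then exhibits $\Mmod$ as a direct summand of $\fp \Nmod$ for some simple local system $\Nmod$ on $\widetilde Z$. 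The induced morphism $\fsh \colon \Ash \to \widetilde Z^\natural$ on moduli spaces of integrable connections pulls cohomology support loci back to cohomology support loci, so each $S_m^k(\Mmod)$ is a finite union of preimages under $\fsh$ of cohomology support loci of $\Nmod$ on $\widetilde Z$.

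Finally, on the smooth projective variety $\widetilde Z$, one invokes the classical structure theorems of Green--Lazarsfeld \cites{GL1,GL2}, Arapura \cite{Arapura}, and Simpson \cite{Simpson}: cohomology support loci of local systems on $\widetilde Z$ are finite unions of linear subvarieties of $\widetilde Z^\natural$, the translates being by torsion points whenever the local system is of geometric origin. Preimages under the algebraic homomorphism $\fsh$ preserve both properties, yielding \ref{en:c-linear-a} and \ref{en:c-linear-b}. The main obstacle is bookkeeping the "of geometric origin" hypothesis through the chain of reductions: the composition factors from the standard t-structure, the simple summands produced by the decomposition theorem, and the local system $\Nmod$ on $\widetilde Z$ must all inherit geometric origin from $E$, since Simpson's integrality--and hence the torsion-translate statement in~\ref{en:c-linear-b}--is only available in that setting. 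A careful application of \cite{BBD}*{\S6.2} at each reduction step should ensure this.
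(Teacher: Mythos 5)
There is a genuine gap, and it begins at the second sentence. The map $\Phi \colon \Ash \to \Char(A)$ (your $\mu$) is \emph{not} a morphism of algebraic groups: it is a bijective biholomorphism between two quasi-projective varieties carrying genuinely different algebraic structures (e.g.\ for an elliptic curve, $\Ash$ is an affine $\CC$-bundle over $\Ah$ while $\Char(A) \simeq (\Cst)^2$), and the correspondence $(L,\nabla)\mapsto\ker\nabla$ is transcendental. If $\Phi$ were algebraic the theorem would lose most of its content. The reduction to $S_m^k(\Mmod)$ is still legitimate because $\Phi$ matches linear subvarieties and torsion points on the two sides by their explicit descriptions (Lemma~\ref{lem:relationship}), but the flawed justification hides the actual mechanism of the proof: one shows that the locus is a closed \emph{algebraic} subset of $\Ash$ (your Fourier--Mukai step, essentially Proposition~\ref{prop:alg-h}) \emph{and} that its image under $\Phi$ is a closed algebraic subset of $\Char(A)$ (Theorem~\ref{thm:alg-c}, proved via the group-ring transform $\derR \pl(E\tensor_{\CC}\shL_R)$ over $R=\CC\lbrack\Lambda\rbrack$), and then invokes Simpson's bi-algebraicity theorem (Theorem~\ref{thm:simpson}): a closed subset algebraic in both models is automatically a finite union of linear subvarieties. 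Your proposal never establishes, or even mentions, algebraicity on the Betti side, which is half of the input to the only known linearity mechanism here.

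The alternative route you substitute for this (d\'evissage to simple objects, decomposition theorem, classical Green--Lazarsfeld/Simpson on a resolution) does not close the gap. First, jump loci behave badly under extensions and under passing to direct summands: the long exact sequences and the decomposition theorem only give \emph{containments} of $S_m^k$ in finite unions of loci of the pieces, and a closed subset merely contained in a finite union of linear subvarieties need not itself be one; so neither the composition-series induction nor the direct-summand step yields the asserted equality. Second, a simple regular holonomic $\Dmod_A$-module is the minimal extension of an irreducible local system on an open subset $U$ of its support $Z$; that local system has nontrivial monodromy around the boundary in general and does \emph{not} extend to a local system $\Nmod$ on a resolution $\widetilde Z$, so the object you want to feed into the classical theorems need not exist. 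Third, the classical structure theorems you cite concern the loci $\{\rho : H^k(X,\CCrho)\neq 0\}$ for constant (or unitary) coefficients on a smooth projective variety; the corresponding statement for twists of an arbitrary simple local system is not classical and is essentially equivalent in difficulty to the theorem being proved. For part~\ref{en:c-linear-b}, the paper likewise does not push ``geometric origin'' through a chain of reductions: it shows the loci are \emph{absolute closed} (all Galois conjugates are algebraic and defined over $\QQb$, using the $\QQ$-structure of the underlying perverse sheaf of a mixed Hodge module) and applies Simpson's theorem on absolute closed subsets.
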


Here we are using the expression \define{(arithmetic) linear subvarieties} for what Simpson
was originally calling \define{(torsion) translates of triple tori} in
\cite{Simpson}*{p.~365}; the precise definition is the following.

\begin{definition}
A \define{linear subvariety} of $\Char(A)$ is any subset of the form 
\begin{equation} \label{eq:lin-CharA}
	\rho \cdot \im \bigl( \Char(f) \colon \Char(B) \to \Char(A) \bigr),
\end{equation}
for a surjective morphism of abelian varieties $f \colon A \to B$ with connected
fibers, and a character $\rho \in \Char(A)$. We say that a linear subvariety is
\define{arithmetic} if $\rho$ can be taken to be torsion point of $\Char(A)$.
\end{definition}

\begin{note}
The reason for the term \define{arithmetic} is as follows. Let $\Ash$ be the moduli
space of line bundles with integrable connection on $A$; it is also a complex
algebraic variety, biholomorphic to $\Char(A)$, but with a different algebraic
structure. When $A$ is defined over a number field, the torsion points are precisely
those points on the algebraic varieties $\Char(A)$ and $\Ash$ that are simultaneously
defined over a number field in both \cite{Simpson}*{Proposition~3.4}.
\end{note}

Our next result has to do with the codimension of the cohomology support loci $S^k(E)
= S_1^k(E)$. Recall that the category $\Dbc(\CC_A)$ has a nonstandard t-structure
\[
	\Bigl( \piDtc{\leq 0}(\CC_A), \piDtc{\geq 0}(\CC_A) \Bigr),
\]
called the \define{perverse t-structure}, whose heart is the abelian category of
perverse sheaves \cite{BBD}. We show that the position of a constructible
complex with respect to this t-structure can be read off from its cohomology
support loci.

\begin{theorem} \label{thm:c-t}
Let $E \in \Dbc(\CC_A)$ be a constructible complex.
\begin{aenumerate}
\item \label{en:c-t-1}
One has $E \in \piDtc{\leq 0}(\CC_A)$ iff $\codim S^k(E) \geq 2 k$ for every $k
\in \ZZ$.
\item \label{en:c-t-2}
Similarly, $E \in \piDtc{\geq 0}(\CC_A)$ iff $\codim S^k(E) \geq -2k$ for every
$k \in \ZZ$.
\item \label{en:c-t-3}
In particular, $E$ is a perverse sheaf iff $\codim S^k(E) \geq \abs{2k}$ for
every $k \in \ZZ$.
\end{aenumerate}
\end{theorem}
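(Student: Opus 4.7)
My plan is to deduce parts (b) and (c) from (a), and then to attack (a) via the Fourier--Mukai transform. Part (c) is immediate: a complex is perverse iff it lies in $\piDtc{\leq 0}(\CC_A) \cap \piDtc{\geq 0}(\CC_A)$, so the two inequalities $\codim S^k(E) \geq 2k$ and $\codim S^k(E) \geq -2k$ combine into $\codim S^k(E) \geq \abs{2k}$. For (b), Verdier duality does the job: $\mathbf{D}_A$ exchanges $\piDtc{\leq 0}(\CC_A)$ and $\piDtc{\geq 0}(\CC_A)$; combined with the natural isomorphism $\mathbf{D}_A(E \tensor_{\CC} \CCrho) \cong \mathbf{D}_A(E) \tensor_{\CC} \CC_{\rho^{-1}}$ and Poincar\'e duality on the compact abelian variety $A$, one obtains $\dim \HH^k(A, E \tensor \CCrho) = \dim \HH^{-k}(A, \mathbf{D}_A E \tensor \CC_{\rho^{-1}})$, so $S^k(E) = \iota\bigl(S^{-k}(\mathbf{D}_A E)\bigr)$ for the codimension-preserving involution $\iota(\rho) = \rho^{-1}$ on $\Char(A)$. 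Hence (a) applied to $\mathbf{D}_A E$ yields (b) for $E$.

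For part (a), the plan is to translate across the Riemann--Hilbert and Fourier--Mukai equivalences. Riemann--Hilbert gives $\Dbc(\CC_A) \simeq \Dbrh(\Dmod_A)$, matching the perverse t-structure on the left with the standard t-structure on the right. Writing $\Mmod$ for the regular holonomic complex corresponding to $E$, base change for the kernel $(\Psh, \nablash)$ on $A \times \Ash$ produces an identification $S^k(E) = \supp \cohH^k \FM_A(\Mmod)$ under the biholomorphism $\Char(A) \simeq \Ash$. So (a) reduces to the claim that $\Mmod$ lies in standard degree $\leq 0$ iff $\codim_{\Ash} \supp \cohH^k \FM_A(\Mmod) \geq 2k$ for every $k$---that is, that $\FM_A$ intertwines the standard t-structure on $\Dbh(\Dmod_A)$ with a perverse coherent t-structure on $\Dbcoh(\OAsh)$ cut out precisely by these codimension inequalities, as announced in the abstract.

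To establish that intertwining, the forward direction is a generic vanishing statement: for a single holonomic $\Dmod_A$-module $\Mmod$, $\codim \supp \cohH^k \FM_A(\Mmod) \geq 2k$. I would use Theorem~\ref{thm:c-linear} to reduce each irreducible component of the support to a linear subvariety $\rho \cdot \Char(f)(\Char(B))$ of dimension $2\dim B$ arising from a surjective homomorphism $f \colon A \to B$ (automatically smooth with connected fibers), and then apply Leray for $f$, together with the compatibility of $\FM_A$ with pullback along $f$ and pushforward along the dual morphism $\fsh \colon \Bsh \to \Ash$, to reduce to a generic vanishing statement on $B$ and induct on $\dim A$. The reverse direction uses the Kashiwara--Arinkin--Bezrukavnikov formalism: the stated codimension inequalities do define a t-structure on $\Dbcoh(\OAsh)$, and one checks via the Laumon--Rothstein equivalence that its heart corresponds under $\FM_A$ to the category of holonomic $\Dmod_A$-modules. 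The main obstacle is the sharp bound $\codim \geq 2k$ (rather than $\geq k$ or $\geq 1$): the factor of $2$ reflects $\dim \Ash = 2\dim A$ and must be squeezed out of the Leray step through the precise base-change formula along $\fsh$ combined with the smoothness of $f$, ensuring that cohomology on $A$ transfers to cohomology on $B$ with the full doubled codimension budget.
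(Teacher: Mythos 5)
Your proposal follows essentially the same route as the paper: part (a) is reduced via Riemann--Hilbert and Lemma~\ref{lem:relationship} to the Fourier--Mukai t-structure correspondence (Theorem~\ref{thm:t-structure}), whose forward direction is proved exactly as you sketch (linear subvarieties from the structure theorem, Laumon's compatibilities with $\fp$ and $\fsh$, induction on $\dim A$) and whose converse uses the Kashiwara--Arinkin--Bezrukavnikov formalism, while parts (b) and (c) follow by Verdier duality and by combining (a) and (b), as in the paper. The only imprecision is the degree-by-degree identification $S^k(E) = \Supp \shH^k \FM_A(\Mmod)$, which base change only gives cumulatively as in \eqref{eq:base-change}; this is harmless and is handled in the paper by a descending induction on the cohomological degree.
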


A consequence is the following ``generic vanishing theorem'' for perverse sheaves; a
similar (but less precise) statement has also been proved recently by Kr\"amer and
Weissauer \cite{KW}*{Theorem~2}.

\begin{corollary}
Let $E \in \Dbc(\CC_A)$ be a perverse sheaf on a complex abelian variety. Then 
the cohomology support loci $S_m^k(E)$ are finite unions of linear subvarieties of
$\Char(A)$ of codimension at least $\abs{2k}$. In particular, one has
\[
	\HH^k \bigl( A, E \tensor_{\CC} \CCrho \bigr) = 0 
\]
for general $\rho \in \Char(A)$ and $k \neq 0$.
\end{corollary}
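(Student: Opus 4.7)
The plan is to deduce the corollary directly by combining the two main theorems of the subsection, Theorem~\ref{thm:c-linear} and Theorem~\ref{thm:c-t}, together with the elementary containment $S_m^k(E) \subseteq S_1^k(E)$ for all $m \geq 1$.

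First I would apply Theorem~\ref{thm:c-linear}\ref{en:c-linear-a} to conclude that each $S_m^k(E)$ is a finite union of linear subvarieties of $\Char(A)$; this already accounts for the structural half of the claim. Next, since $E$ is perverse, Theorem~\ref{thm:c-t}\ref{en:c-t-3} yields the codimension inequality $\codim S^k(E) \geq \abs{2k}$ for every $k \in \ZZ$, where $S^k(E) = S_1^k(E)$. This is the essential input.

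To upgrade the codimension bound from $m=1$ to arbitrary $m$, I would observe that $S_m^k(E) \subseteq S_1^k(E)$ trivially from the definition \eqref{eq:CSL-c}. Writing $S_1^k(E) = \bigcup_i Z_i$ as the decomposition into (linear) irreducible components from Theorem~\ref{thm:c-linear}\ref{en:c-linear-a}, each $Z_i$ has codimension at least $\abs{2k}$ in $\Char(A)$. Any irreducible component $W$ of $S_m^k(E)$ is contained in some $Z_i$, so $\codim W \geq \codim Z_i \geq \abs{2k}$; thus every linear subvariety appearing in the decomposition of $S_m^k(E)$ has the required codimension bound.

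Finally, the ``in particular'' statement follows at once: for $k \neq 0$, the set $S^k(E)$ has codimension at least $\abs{2k} \geq 2$ in $\Char(A)$, so it is a proper closed subset, and for $\rho$ in its complement one has $\HH^k(A, E \tensor_{\CC} \CCrho) = 0$. There is no genuine obstacle here since all the real work has been done in Theorems~\ref{thm:c-linear} and~\ref{thm:c-t}; the only minor subtlety is the bookkeeping argument that extends the bound from $S_1^k$ to $S_m^k$ via the nesting of support loci.
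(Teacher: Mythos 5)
Your argument is correct and is exactly the deduction the paper intends: the corollary is stated as an immediate consequence of Theorems~\ref{thm:c-linear} and~\ref{thm:c-t}, with the nesting $S_m^k(E) \subseteq S_1^k(E)$ supplying the codimension bound for all $m \geq 1$. Nothing further is needed.
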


The generic vanishing theorem implies that the \define{Euler characteristic}
\[
	\chi(A, E) = \sum_{k \in \ZZ} (-1)^k \dim \HH^k \bigl( A, E \bigr)
\]
of a perverse sheaf on an abelian variety is always nonnegative, a result originally
due to Franecki and Kapranov \cite{FK}*{Corollary~1.4}. Indeed, from the deformation
invariance of the Euler characteristic, we get
\[
	\chi(A, E) = \chi \bigl( A, E \tensor_{\CC} \CCrho \bigr) = 
		\dim \HH^0 \bigl( A, E \tensor_{\CC} \CCrho \bigr) \geq 0
\]
for a generic character $\rho \in \Char(A)$. For \emph{simple} perverse
sheaves with $\chi(A, E) = 0$, we have the following structure theorem, which has
also been proved by Weissauer \cite{Weissauer}*{Theorem~2}.

\begin{theorem} \label{thm:c-simple}
Let $E \in \Dbc(\CC_A)$ be a simple perverse sheaf. If $\chi(A, E) = 0$, then there
exists an abelian variety $B$, a surjective morphism $f \colon A \to B$ with connected
fibers, and a simple perverse sheaf $E' \in \Dbc(\CC_B)$ with $\chi(B, E') > 0$, such that
\[
	E = \fu E' \tensor_{\CC} \CCrho
\]
for some character $\rho \in \Char(A)$.
\end{theorem}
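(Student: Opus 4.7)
The plan is to combine the structure theorems for cohomology support loci with a descent argument along the projection associated to a maximal linear component of $S^0(E)$. Since $E$ is perverse and $\chi(A, E) = 0$, the corollary following Theorem~\ref{thm:c-t} gives $H^{\ast}(A, E \tensor_\CC \CC_\rho) = 0$ for generic $\rho$, so $S^0(E)$ is a proper closed subvariety of $\Char(A)$. By Theorem~\ref{thm:c-linear}\ref{en:c-linear-a}, $S^0(E)$ is a finite union of linear subvarieties of positive codimension. I would pick an irreducible component $Z$ of maximum dimension and write it as $Z = \rho_0 \cdot \im(\Char(f_0))$ for a surjective homomorphism of abelian varieties $f_0 \colon A \to B_0$ with connected fibers; such an $f_0$ is automatically smooth, so the shifted pullback $f_0^{\ast}\decal{\dim A - \dim B_0}$ is perverse t-exact. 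After replacing $E$ by $E \tensor_\CC \CC_{\rho_0^{-1}}$, still simple perverse with vanishing Euler characteristic, I may assume $\rho_0 = 1$ and that the whole subgroup $\im(\Char(f_0))$ is contained in $S^0(E)$.

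The central step is then to show that $E = f_0^{\ast} E' \decal{\dim A - \dim B_0}$ for some simple perverse sheaf $E'$ on $B_0$. I would extract $E'$ from the pushforward $F \defeq Rf_{0,\ast} E \in \Dbc(\CC_{B_0})$. By the projection formula,
\[
	H^0(B_0, F \tensor_\CC \CC_\sigma)
		= H^0 \bigl( A, E \tensor_\CC \CC_{f_0^{\ast} \sigma} \bigr) \neq 0
\]
for every $\sigma \in \Char(B_0)$, since $f_0^{\ast} \sigma \in \im(\Char(f_0)) \subseteq S^0(E)$. Analysing the perverse hypercohomology spectral sequence of $F$, some perverse composition factor $E'$ of $F$ must then satisfy $S^0(E') = \Char(B_0)$. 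Adjunction between $f_0^{\ast}$ and $Rf_{0,\ast}$ produces a nonzero morphism $f_0^{\ast} E' \decal{\dim A - \dim B_0} \to E$; simplicity of $E$, combined with a support-dimension comparison ruling out perverse constituents of $F$ supported on proper subvarieties of $B_0$ (via the maximality of $\dim Z$), should upgrade this to an isomorphism.

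Once the descent is established, the positivity $\chi(B_0, E') > 0$ is immediate: $E'$ is perverse on $B_0$ with $S^0(E') = \Char(B_0)$, so Theorem~\ref{thm:c-t} applied on $B_0$ yields $\codim S^k(E') \geq \abs{2k}$, forcing $H^k(B_0, E' \tensor_\CC \CC_\sigma) = 0$ for $k \neq 0$ and generic $\sigma$, while $H^0$ is nonzero for \emph{every} $\sigma$; hence $\chi(B_0, E') = \dim H^0(B_0, E' \tensor_\CC \CC_\sigma) > 0$. The main obstacle lies in the descent step: the pushforward $Rf_{0,\ast}$ is not perverse t-exact, so extracting the correct simple perverse constituent $E'$ from $F$, and then verifying by adjunction and dimension comparison that $f_0^{\ast} E' \decal{\dim A - \dim B_0}$ genuinely recovers $E$ rather than merely maps into it, is delicate. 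Theorem~\ref{thm:c-linear}, applied now to the support loci of the various perverse constituents of $F$, together with the maximality of $Z$, is what controls this comparison; the simplicity of $E$ enters only at the very end, to turn the nonzero adjunction morphism into an isomorphism.
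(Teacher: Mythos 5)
There is a genuine gap, and it sits exactly where you flag the argument as ``delicate'': the production of the nonzero morphism $f_0^{\ast} E' \decal{d} \to E$ (where $d = \dim A - \dim B_0$). Your $E'$ is extracted as a composition factor of a perverse cohomology of $F = \derR f_{0\ast} E$ (implicitly of ${}^p\shH^0(F)$, since that is what controls $\HH^0(B_0, F \tensor \CC_\sigma)$ for generic $\sigma$). But a composition factor carries no natural morphism to or from $F$. Adjunction gives $\Hom\bigl(f_0^{\ast}E'\decal{d}, E\bigr) = \Hom\bigl(E', F\decal{-d}\bigr)$, and since $F\decal{-d}$ lies in ${}^{\pi}\mathrm{D}^{\geq 0}$ with ${}^p\shH^0(F\decal{-d}) = {}^p\shH^{-d}(F)$, a nonzero such morphism forces $E'$ to be a \emph{subobject of the lowest perverse cohomology} ${}^p\shH^{-d}(F)$ (dually, the other adjunction requires a quotient of the top one). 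Nothing in your argument places $E'$ there, and ``support-dimension comparison via maximality of $\dim Z$'' does not do it: perverse constituents supported on proper subvarieties of $B_0$ (e.g.\ skyscrapers) can have $S^0 = \Char(B_0)$, so they are not ruled out this way. This is precisely the step where the paper does real work: in the proof of Theorem~\ref{thm:simple-support} one shows that the composite of the adjunction morphism $\Mmod \to \fsi \fp \Mmod$ with the projection onto $\fu \shH^r(\fp\Mmod)$ is nonzero, and the proof goes through the Fourier--Mukai transform, using Theorem~\ref{thm:t-structure} and Proposition~\ref{prop:surprise} to pin down $\codim \Supp \shH^r \FM_A(\Mmod) = 2r$, and the Nakayama-type Lemma~\ref{lem:support} on the coherent side. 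You have no substitute for this mechanism.

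A secondary issue is your choice of $Z$ as a maximal-dimensional component of $S^0(E)$. The paper instead takes a codimension-$2r$ component of $S^r(\Mmod)$, where $r \geq 1$ is the least degree with $\shH^r \FM_A(\Mmod) \neq 0$; that $r$ determines $\dim B = \dim A - r$. It is not clear a priori that $S^0(E)$ is even nonempty for a simple perverse sheaf with $\chi(A,E)=0$ (this would follow from hard Lefschetz for arbitrary simple perverse sheaves, i.e.\ Kashiwara's conjecture, which the paper deliberately avoids), nor that its top-dimensional component identifies the correct quotient. Note finally that the paper's own proof of Theorem~\ref{thm:c-simple} is a one-line reduction: apply the Riemann--Hilbert correspondence and invoke Corollary~\ref{cor:h-simple}, whose content is Theorem~\ref{thm:simple-support}. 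Your proposal is in effect an attempt to reprove that structure theorem purely on the constructible side, and the essential descent step is missing.
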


\subsection{Results about holonomic complexes}

All the theorems in the previous section are actually consequences of similar results
about holonomic complexes of $\Dmod$-modules on abelian varieties. In fact, the
situation for $\Dmod_A$-modules is considerably better, because we have the
Fourier-Mukai transform \eqref{eq:FM-intro} at our disposal.

Again, we begin by saying a few words about terminology.
Recall that $\Dmod_A$ is the sheaf of linear differential operators
of finite order; since the tangent bundle of $A$ is trivial, $\Dmod_A$ is generated,
as an $\OA$-algebra, by any basis $\partial_1, \dotsc, \partial_g \in H^0(A,
\shT_A)$, subject to the relations $\lie{\partial_i}{\partial_j} = 0$ and $\lie{\partial_i}{f}
= \partial_i f$. By an (algebraic) \define{$\Dmod_A$-module}, we mean a sheaf of left
$\Dmod_A$-modules that is quasi-coherent as a sheaf of $\OA$-modules; a
$\Dmod_A$-module is holonomic if its characteristic variety, as a subset of the
cotangent bundle $T^{\ast} A$, has dimension equal to $\dim A$. Finally, a
\define{holonomic complex} is a complex of $\Dmod_A$-modules $\Mmod$, whose
cohomology sheaves $\shH^i \Mmod$ are holonomic, and vanish for $i$ outside some
bounded interval. We denote by $\Dbcoh(\Dmod_A)$ the bounded derived
category of coherent $\Dmod_A$-modules, and by $\Dbh(\Dmod_A)$ the full subcategory
of holonomic complexes.

Let $\Ash$ be the moduli space of line bundles with integrable connection on $A$.
For any $\Dmod_A$-module $\Mmod$, and any $(L, \nabla) \in \Ash$, the tensor product
$\Mmod \tensor_{\OA} L$ again has the structure of a $\Dmod_A$-module; for the sake
of clarity, we will denote it by the symbol $\Mmod \tensor_{\OA} (L, \nabla)$. We
then define the cohomology support loci of a complex of $\Dmod_A$-modules 
$\Mmod \in \Dbcoh(\Dmod_A)$ by the same formula as in \eqref{eq:CSL-h}, where 
where $\DR_A$ denotes the de Rham functor.

One of the main results of this paper is the following structure theorem for
cohomology support loci of holonomic complexes.

\begin{definition}
A \define{linear subvariety} of $\Ash$ is any subset of the form 
\begin{equation} \label{eq:lin-Ash}
	(L, \nabla) \tensor \im \bigl( \fsh \colon \Bsh \to \Ash \bigr),
\end{equation}
for a surjective morphism of abelian varieties $f \colon A \to B$ with connected
fibers, and a line bundle with integrable connection $(L, \nabla) \in \Ash$. We say
that a linear subvariety is \define{arithmetic} if $(L, \nabla)$ can be taken to be torsion
point of $\Ash$.
\end{definition}

\begin{theorem} \label{thm:h-linear}
Let $\Mmod \in \Dbh(\Dmod_A)$ be a holonomic complex.
\begin{aenumerate}
\item Each $S_m^k(\Mmod)$ is a finite union of linear subvarieties of $\Ash$.
\item If $\Mmod$ is a semisimple regular holonomic $\Dmod_A$-module of geometric
origin, in the sense of \cite{BBD}*{6.2.4}, then these linear subvarieties are
arithmetic.  
\end{aenumerate}
\end{theorem}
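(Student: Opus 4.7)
The plan is to pass to the Fourier-Mukai transform $\FM_A(\Mmod) \in \Dbcoh(\OAsh)$ and translate the problem into a question about the coherent cohomology sheaves $\cohH^k \FM_A(\Mmod)$. By base change applied to the kernel $(\Psh, \nablash)$ on $A \times \Ash$ (using that $A$ is proper), one identifies the fiber of $\FM_A(\Mmod)$ at a point $(L, \nabla) \in \Ash$, up to the usual semicontinuity comparison, with the de Rham complex of the twist $\Mmod \tensor_{\OA} (L, \nabla)$. Consequently each $S_m^k(\Mmod)$ coincides with a determinantal jumping locus of $\cohH^{\bullet} \FM_A(\Mmod)$ in $\Ash$, and is in particular a Zariski-closed algebraic subvariety.

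For part (a), the essential content is the linearity. I would argue by induction on $\dim A$, analyzing an irreducible component $Z \subseteq S_m^k(\Mmod)$ at a general point $p = (L_0, \nabla_0)$. The tangent space $T_p \Ash$ identifies with $H^1(A, \OmA{1}) \oplus H^1(A, \OA) \cong H^1_{\mathrm{dR}}(A, \CC)$, and a derivative-complex calculation in the spirit of Green-Lazarsfeld, refined for $\Dmod$-modules in \cite{PS}, shows that the tangent cone to $Z$ at $p$ is a finite union of linear subspaces of the form $H^1_{\mathrm{dR}}(B, \CC) \hookrightarrow H^1_{\mathrm{dR}}(A, \CC)$ coming from surjective morphisms $f \colon A \to B$ with connected fibers. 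Because $\Ash$ is a commutative group whose exponential restricts algebraically to each such linear subspace (namely as $\fsh \colon \Bsh \to \Ash$), one then upgrades the infinitesimal statement, using the induction hypothesis applied on $B$ to a suitable pushforward of $\Mmod$, to the conclusion that each irreducible component of $Z$ is a translate of $\im \fsh$.

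For part (b), assume $\Mmod$ is a semisimple regular holonomic $\Dmod_A$-module of geometric origin. The Riemann-Hilbert correspondence converts $\Mmod$ into a semisimple perverse sheaf $E \in \Dbc(\CC_A)$ of geometric origin, and the natural biholomorphism $\Ash \to \Char(A)$ takes $S_m^k(\Mmod)$ bijectively onto $S_m^k(E)$. By Theorem \ref{thm:c-linear}\ref{en:c-linear-b} the latter is a finite union of arithmetic linear subvarieties of $\Char(A)$, while part (a) says the same set is a finite union of linear subvarieties of $\Ash$. Simpson's theorem \cite{Simpson}*{Proposition~3.4} then forces any subvariety simultaneously linear in both algebraic structures to be translated by a torsion point common to $\Char(A)$ and $\Ash$, which is the arithmeticity claim.

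The main obstacle will be executing the derivative-complex analysis in step (a) for arbitrary, possibly irregular, holonomic $\Dmod$-modules. In the regular case one can reduce to Theorem \ref{thm:c-linear} via Riemann-Hilbert, but for genuinely irregular $\Mmod$ there is no constructible avatar, and one must work directly with $\FM_A(\Mmod)$ as a coherent complex on $\Ash$ and construct an infinitesimal model for its local structure---essentially a noncommutative replacement, at the level of Rees modules or $\Rmod$-modules, for the Hodge-theoretic BGG machinery underlying the original generic vanishing theorem.
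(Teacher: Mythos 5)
Your first paragraph (algebraicity of $S_m^k(\Mmod)$ in $\Ash$ via base change for $\FM_A$) is exactly the paper's Proposition~\ref{prop:alg-h}, but from there the two arguments diverge, and your route for part~(a) has a real gap. The paper never performs a tangent-cone or derivative-complex analysis. Instead it establishes algebraicity of the \emph{same} set in the \emph{other} algebraic structure: it builds a ``Fourier--Mukai transform'' for constructible complexes by pushing $\DR_A(\Mmod) \tensor_{\CC} \shL_R$ forward to a coherent complex over the group ring $R = \CC\lbrack\Lambda\rbrack$, deducing that $\Phi\bigl(S_m^k(\Mmod)\bigr) = S_m^k\bigl(\DR_A(\Mmod)\bigr)$ is Zariski-closed in $\Char(A)$; linearity is then immediate from Simpson's Theorem~\ref{thm:simpson}, which says that a subset closed in both $\Ash$ and $\Char(A)$ must be a finite union of linear subvarieties. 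Your proposed alternative --- induction on $\dim A$ plus a Green--Lazarsfeld derivative complex identifying the tangent cone with a union of subspaces $H^1_{\mathrm{dR}}(B,\CC)$ --- is not carried out, and you concede yourself that no such infinitesimal model is available for irregular holonomic modules. Even in the regular case, the step from ``the tangent cone at a general point is a union of linear subspaces'' to ``the component is a translate of $\im \fsh$'' is a substantial unproved global statement (this is precisely the hard content that Green--Lazarsfeld, Arapura and Simpson had to supply in the classical setting), so part~(a) as written is incomplete.

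Part~(b) is circular within the paper's logic: Theorem~\ref{thm:c-linear}\ref{en:c-linear-b} is itself \emph{deduced} (in \subsecref{subsec:proofs}) from the $\Dmod$-module statement you are trying to prove, via Riemann--Hilbert and Lemma~\ref{lem:relationship}; the two assertions are the same theorem in two languages, so neither can be used to prove the other. Moreover, Simpson's Proposition~3.4 only characterizes torsion points as those defined over number fields in both structures; it does not convert a pair of linearity statements into arithmeticity of the translating point. The actual argument (Theorem~\ref{thm:geom-origin}) shows that $S_m^k(\Mmod)$ is \emph{absolute closed}: one conjugates the geometric construction of $\Mmod$ by automorphisms $\sigma \in \Aut(\CC/\QQ)$, uses Saito's theory to endow the conjugates with $\QQ$-structures so that the corresponding Betti loci are defined over $\QQb$, and then invokes Simpson's theorem that absolute closed subsets are finite unions of arithmetic linear subvarieties. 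None of this Galois-conjugation input appears in your sketch, and it cannot be replaced by the duality between the two algebraic structures alone.
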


Theorem~\ref{thm:h-linear} and Theorem~\ref{thm:c-linear} are proved together; in fact,
the main idea is to exploit the close relationship between cohomology support loci for
constructible and holonomic complexes. Recall that we have a biholomorphic mapping
\begin{equation} \label{eq:Phi}
	\Phi \colon \Ash \to \Char(A), \quad (L, \nabla) \mapsto \ker \nabla,
\end{equation}
by the well-known correspondence between local systems and vector bundles with
integrable connection. Now if $\Mmod$ is a holonomic $\Dmod_A$-module, then according
to a theorem of Kashiwara \cite{HTT}*{Theorem~4.6.6}, its de Rham complex
\[
	\DR_A(\Mmod) = \Bigl\lbrack \Mmod \to \Omega_A^1 \tensor \Mmod \to 
		\dotsb \to \Omega_A^{\dim A} \tensor \Mmod \Bigr\rbrack,
\]
placed in degrees $-\dim A, \dotsc, 0$, is a perverse sheaf on $A$. More generally,
$\DR_A(\Mmod)$ is a constructible complex for any $\Mmod \in \Dbh(\Dmod_A)$
\cite{HTT}*{Theorem~4.6.3}. It is very easy to show -- see
Lemma~\ref{lem:relationship} below -- that the cohomology support loci for $\Mmod$
and $\DR_A(\Mmod)$ are connected by the formula 
\begin{equation} \label{eq:relation}
	\Phi \bigl( S_m^k(\Mmod) \bigr) = S_m^k \bigl( \DR_A(\Mmod) \bigr).
\end{equation}
A much deeper relationship comes from the Riemann-Hilbert correspondence of Kashiwara
and Mebkhout \cite{HTT}*{Theorem~7.2.1}, which asserts that the functor
\[
	\DR_A \colon \Dbrh(\Dmod_A) \to \Dbc(\CC_A)
\]
from regular holonomic complexes to constructible complexes is an equivalence
of categories. Together with \eqref{eq:relation}, this means that cohomology support
loci for holonomic and constructible complexes completely determine each other.

Let us now briefly sketch the proof of Theorem~\ref{thm:h-linear}. Our starting point
is the observation that both complex manifolds $\Char(A)$ and $\Ash$ naturally have
the structure of complex algebraic varieties; while isomorphic as complex
manifolds, they are not isomorphic as algebraic varieties. The special property of
\emph{linear subvarieties} is that they are algebraic in both models. Indeed, for any
surjective morphism $f \colon A \to B$ of abelian varieties, \eqref{eq:lin-Ash} is an
algebraic subvariety of $\Ash$, and \eqref{eq:lin-CharA} is an algebraic subvariety
of $\Char(A)$; moreover, since $\Phi$ is an isomorphism of complex Lie groups,
\[
	\Phi \Bigl( (L, \nabla) \tensor 
		\im \bigl( \fsh \colon \Bsh \to \Ash \bigr) \Bigr) = 
		 \Phi(L, \nabla) \cdot \im \bigl( \Char(f) \colon \Char(B) \to \Char(A) \bigr).
\]
The following difficult theorem by Simpson \cite{Simpson}*{Theorem~3.1} shows that
finite unions of linear subvarieties are the only closed subsets with this property.

\begin{theorem}[Simpson] \label{thm:simpson}
Let $Z$ be a closed algebraic subset of $\Ash$. If $\Phi(Z)$ is again a closed
algebraic subset of $\Char(A)$, then $Z$ is a finite union of linear subvarieties of
$\Ash$, and $\Phi(Z)$ is a finite union of linear subvarieties of $\Char(A)$.
\end{theorem}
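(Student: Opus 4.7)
The plan is to exploit the fact that $\Ash$ and $\Char(A)$ are two different algebraic structures on the same underlying complex Lie group $M$, and to show that closed subsets algebraic in both structures are strongly constrained. Linear subvarieties are, essentially by construction, the translates of those algebraic subgroups that remain algebraic under both structures, so the statement amounts to showing that every ``bi-algebraic'' closed subset of $M$ is a finite union of cosets of bi-algebraic subgroups. First I would reduce to the case where $Z$ is irreducible, which is legitimate because algebraic sets have finitely many irreducible components and $\Phi$ is a homeomorphism.

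Next I would study the translation stabilizer $H = \menge{w \in M}{w \cdot Z = Z}$. Because the group law is algebraic in both structures, $H$ is a closed algebraic subgroup in both $\Ash$ and $\Char(A)$. A structural lemma is then needed: a connected algebraic subgroup of $M$ that is algebraic in both structures has the form $\im \bigl( \fsh \colon \Bsh \to \Ash \bigr)$ for some surjective morphism $f \colon A \to B$ of abelian varieties with connected fibers. One direction is clear. For the converse, the connected algebraic subgroups of $\Char(A) \cong (\Cst)^{2g}$ are subtori, cut out by integral characters; matching such a subtorus with an algebraic subgroup of $\Ash$ forces the defining characters to come from a quotient $f \colon A \to B$, essentially because the only Lie subalgebras of $H^1(A, \CC)$ common to the two Hodge-theoretic structures are the ones carried by sub-abelian varieties.

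Having set this up, I would proceed by induction on $\dim Z$. If the identity component $H^\circ$ is positive-dimensional, say $H^\circ = \im(\fsh)$ for $f \colon A \to B$, then $Z$ is a union of cosets of $H^\circ$, and its image in $M / H^\circ$ is canonically a closed subset, algebraic in both structures, of the analogous Lie group attached to the identity component of the kernel of $f$; the induction applies there since the dimension has dropped. What remains is the base case: $Z$ irreducible with finite translation stabilizer must be a single point. This is the main obstacle. The argument requires a transcendence input, because under a suitable identification $\Phi$ is essentially the componentwise exponential on a half-dimensional subspace of $H^1(A, \CC)$; a positive-dimensional irreducible subvariety algebraic in both structures would yield algebraic relations among exponentials of algebraic numbers ruled out by a Schneider--Lang or Ax--Schanuel type theorem, adapted to the specific geometry of $\Ash$. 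This is the deep heart of Simpson's original argument, and it seems unavoidable.

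Finally, once $Z$ is known to be a finite union of cosets of subgroups of the form $\im(\fsh)$, one applies the complex Lie group isomorphism $\Phi$ coset by coset: each $\im(\fsh)$ maps to $\im \bigl( \Char(f) \colon \Char(B) \to \Char(A) \bigr)$ by functoriality of $\Phi$ with respect to $f$, and translates go to translates. This yields the corresponding finite union decomposition of $\Phi(Z)$ into linear subvarieties of $\Char(A)$ and closes the argument.
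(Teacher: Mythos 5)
You should be aware first that the paper does not prove this statement at all: it is quoted verbatim as Simpson's Theorem~3.1 from \cite{Simpson} and used as a black box (indeed the text introduces it as ``the following difficult theorem by Simpson''). So the relevant comparison is between your sketch and Simpson's original argument, and there your proposal has a genuine gap. The formal skeleton you set up --- reduction to irreducible $Z$, passage to the translation stabilizer $H$, the classification of connected bi-algebraic subgroups as the images $\im(\fsh)$, and induction on dimension via the quotient by $H^{\circ}$ --- is a reasonable and essentially standard ``bi-algebraicity'' framework, and your identification of the base case (irreducible, finite stabilizer, hence must be a point) as the crux is correct. But at exactly that point you write that the conclusion ``would yield algebraic relations among exponentials \dots ruled out by a Schneider--Lang or Ax--Schanuel type theorem, adapted to the specific geometry of $\Ash$,'' and you supply neither the precise transcendence statement nor the adaptation. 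That step is the entire mathematical content of the theorem; without it the proposal is an outline of where a proof would live, not a proof. In particular, to run such an argument you must first produce points of $Z$ at which to apply transcendence (e.g.\ points rational over a number field in both structures, which requires descending $A$ and $Z$ to $\QQb$ and a specialization argument), and you must formulate exactly which functional or numerical transcendence statement about the comparison map $\Phi$ --- which is not literally a coordinatewise exponential on $\Ash$, but the holonomy map $(L,\nabla)\mapsto\ker\nabla$ --- forces positive-dimensionality to fail. Simpson's proof does this work (via Gelfond--Schneider-type inputs and a careful analysis of the two algebraic structures), and none of it is reproduced or replaced in your write-up.

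Two smaller points. In the induction step you assert that the image of $Z$ in $M/H^{\circ}$ lives in ``the analogous Lie group attached to the identity component of the kernel of $f$'' and is closed and bi-algebraic there; this is true but needs justification (the quotient $\Ash/\im(\fsh)$ is identified with $K^{\natural}$ for $K=\ker f$, and one must check both that the image is closed --- which holds because $Z$ is a union of $H^{\circ}$-cosets --- and that linear subvarieties of $K^{\natural}$ pull back to linear subvarieties of $\Ash$). These are routine but should be spelled out. The final paragraph, transporting the decomposition through $\Phi$ coset by coset, is fine.
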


Thus it suffices to show that cohomology support loci are algebraic subsets of
$\Char(A)$ and $\Ash$, which we do in Theorem~\ref{thm:alg-c} and
Proposition~\ref{prop:alg-h} below. The argument in \subsecref{subsec:constructible}
is based on a sort of ``Fourier-Mukai transform'' for constructible complexes, which
may be of independent interest.

Theorem~\ref{thm:h-linear} has the following consequence for the support of the
Fourier-Mukai transform of a holonomic complex.

\begin{corollary} \label{cor:FM-linear}
Let $\Mmod \in \Dbh(\Dmod_A)$ be a holonomic complex on an abelian variety. Then the
support of the complex $\FM_A(\Mmod)$ is a finite union of linear subvarieties of
$\Ash$. These linear subvarieties are arithmetic whenever $\Mmod$ is a semisimple
regular holonomic $\Dmod_A$-module of geometric origin.
\end{corollary}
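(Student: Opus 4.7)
The plan is to identify $\supp \FM_A(\Mmod)$ with the finite union $\bigcup_k S_1^k(\Mmod)$ of cohomology support loci, and then invoke Theorem~\ref{thm:h-linear}. The key input is the base change property of the Laumon-Rothstein transform: for any closed point $s = (L, \nabla) \in \Ash$ with inclusion $i_s \colon \{s\} \into \Ash$, proper base change applied to the second projection $p_2 \colon A \times \Ash \to \Ash$ in the integral transform defining $\FM_A$ yields a canonical isomorphism
\begin{equation*}
\derL i_s^{\ast} \FM_A(\Mmod) \simeq \derR\Gamma\bigl(A, \DR_A(\Mmod \tensor_{\OA} (L, \nabla))\bigr)\decal{g},
\end{equation*}
with $g = \dim A$ (up to a shift depending on sign and grading conventions). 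In particular, the derived fiber $\derL i_s^{\ast} \FM_A(\Mmod)$ is nonzero precisely when some hypercohomology group appearing in \eqref{eq:CSL-h} is nonzero, i.e.\ iff $s$ belongs to $\bigcup_k S_1^k(\Mmod)$.

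Since $\FM_A(\Mmod) \in \Dbcoh(\OAsh)$ is a bounded complex with coherent cohomology, a standard Nakayama argument---applied to the topmost cohomology sheaf of $\FM_A(\Mmod)$ that is nonzero near $s$---shows that $\derL i_s^{\ast} \FM_A(\Mmod) \neq 0$ if and only if $s \in \supp \FM_A(\Mmod)$. Combining this with the base change isomorphism gives
\begin{equation*}
\supp \FM_A(\Mmod) = \bigcup_{k \in \ZZ} S_1^k(\Mmod),
\end{equation*}
and only finitely many terms contribute because $\Mmod$ is bounded and holonomic. Applying Theorem~\ref{thm:h-linear}\ref{en:c-linear-a} to each nonempty $S_1^k(\Mmod)$ exhibits $\supp \FM_A(\Mmod)$ as a finite union of linear subvarieties of $\Ash$; if in addition $\Mmod$ is a semisimple regular holonomic $\Dmod_A$-module of geometric origin, Theorem~\ref{thm:h-linear}\ref{en:c-linear-b} guarantees that each of these linear subvarieties is arithmetic.

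The only step where real care is required is the base change identification, which is a classical feature of the Laumon-Rothstein functor but demands attention to the shift $\decal{g}$ and to the distinction between the algebraic $\Dmod$-module pushforward along $p_2$ and its relative de Rham realization as an $\OAsh$-complex; everything else in the argument is formal, reducing the corollary to Theorem~\ref{thm:h-linear}.
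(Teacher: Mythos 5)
Your proposal is correct and follows essentially the same route as the paper: the paper's proof consists of citing the base-change identity $\bigcup_{k \geq n} \Supp \shH^k \FM_A(\Mmod) = \bigcup_{k \geq n} S^k(\Mmod)$ (equation \eqref{eq:base-change}, described there as ``a familiar consequence of the base change theorem'') and then applying Theorem~\ref{thm:h-linear}. Your Nakayama argument is simply an explicit write-up of the $n \ll 0$ case of that identity, and it is sound.
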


Our second main result is that that the position of a holonomic complex $\Mmod$ with
respect to the standard t-structure on $\Dbh(\Dmod_A)$ can be read off from the
codimension of its cohomology support loci $S^k(\Mmod) = S_1^k(\Mmod)$.

\begin{theorem}  \label{thm:h-t}
Let $\Mmod \in \Dbh(\Dmod_A)$ be a holonomic complex.
\begin{aenumerate}
\item 
One has $\Mmod \in \Dth{\leq 0}(\Dmod_A)$ iff $\codim S^k(\Mmod) \geq 2 k$ for every $k
\in \ZZ$.
\item 
Similarly, $\Mmod \in \Dth{\geq 0}(\Dmod_A)$ iff $\codim S^k(\Mmod) \geq -2k$ for every
$k \in \ZZ$.
\item
In particular, $\Mmod$ is a single holonomic $\Dmod_A$-module iff $\codim S^k(\Mmod)
\geq \abs{2k}$ for every $k \in \ZZ$.
\end{aenumerate}
\end{theorem}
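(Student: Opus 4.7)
The plan is to deduce Theorem~\ref{thm:h-t} from Theorem~\ref{thm:c-t} by transporting the problem across the de Rham functor $\DR_A \colon \Dbh(\Dmod_A) \to \Dbc(\CC_A)$.

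First I would verify that the codimension hypotheses on the two sides agree. Combining the formula $\Phi\bigl(S_m^k(\Mmod)\bigr) = S_m^k\bigl(\DR_A(\Mmod)\bigr)$ from \eqref{eq:relation} with the fact that $\Phi$ is a biholomorphism between the equidimensional algebraic varieties $\Ash$ and $\Char(A)$, and using that both support loci are algebraic subsets (as established en route to Theorem~\ref{thm:h-linear} and Theorem~\ref{thm:c-linear}), one obtains $\codim S^k(\Mmod) = \codim S^k(\DR_A(\Mmod))$ for every $k$. Thus the numerical conditions in Theorem~\ref{thm:h-t} applied to $\Mmod$ are equivalent to those in Theorem~\ref{thm:c-t} applied to $\DR_A(\Mmod)$.

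Second, the core claim is that $\DR_A$ detects the standard t-structure on $\Dbh(\Dmod_A)$ from the perverse t-structure on $\Dbc(\CC_A)$; that is, for every $\Mmod \in \Dbh(\Dmod_A)$,
\[
\Mmod \in \Dth{\leq 0}(\Dmod_A) \iff \DR_A(\Mmod) \in \piDtc{\leq 0}(\CC_A),
\]
and similarly with $\leq$ replaced by $\geq$. The forward implications follow from Kashiwara's theorem that $\DR_A$ sends a single holonomic $\Dmod_A$-module to a perverse sheaf in perverse degree zero, combined with the truncation triangles and the long exact sequence of perverse cohomology. For the converse, if $i_0 > 0$ is the largest integer with $\shH^{i_0}\Mmod \neq 0$, the triangle
\[
\tau^{<i_0}\Mmod \to \Mmod \to \bigl(\shH^{i_0}\Mmod\bigr)\decal{-i_0} \to
\]
exhibits $\DR_A(\shH^{i_0}\Mmod)$ as the $i_0$-th perverse cohomology of $\DR_A(\Mmod)$. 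It then suffices to invoke the conservativity of $\DR_A$ on the abelian category of holonomic $\Dmod_A$-modules: on a Zariski-dense open subset of its support, a nonzero holonomic module restricts to a nonzero $\shO$-coherent $\Dmod$-module, i.e.\ a flat vector bundle, whose local system of flat sections is detected by $\DR_A$.

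Combining the two reductions with Theorem~\ref{thm:c-t} applied to $\DR_A(\Mmod)$ yields parts~(a) and~(b); part~(c) is their intersection. The main obstacle I anticipate is the converse half of the second step, which lies outside the Riemann-Hilbert equivalence and requires conservativity of $\DR_A$ on the full holonomic category, including irregular modules. This is standard but the one point where care is needed; everything else is a bookkeeping exercise once Theorem~\ref{thm:c-t} and \eqref{eq:relation} are in hand.
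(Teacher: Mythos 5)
Your reduction runs in the wrong direction and is circular within this paper. You deduce Theorem~\ref{thm:h-t} from Theorem~\ref{thm:c-t}, but the paper's proof of Theorem~\ref{thm:c-t} (in \subsecref{subsec:proofs}) consists precisely of choosing $\Mmod \in \Dbrh(\Dmod_A)$ with $\DR_A(\Mmod) \simeq E$ and invoking Theorem~\ref{thm:t-structure}, which is the substantive form of Theorem~\ref{thm:h-t}; the paper states explicitly that the constructible results are consequences of the holonomic ones, not the other way around. Your two transfer steps --- the equality $\codim S^k(\Mmod) = \codim S^k\bigl(\DR_A(\Mmod)\bigr)$ via \eqref{eq:relation}, and the t-exactness and conservativity of $\DR_A$ from the standard t-structure on $\Dbh(\Dmod_A)$ to the perverse t-structure --- are both correct (and are exactly the bridge the paper uses, in the opposite direction), but they only show that Theorems~\ref{thm:h-t} and~\ref{thm:c-t} are equivalent. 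They do not prove either one.

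The actual content --- that the codimension bounds on the $S^k$ characterize the t-structure --- is established in the paper by entirely different means, none of which appear in your proposal: Proposition~\ref{prop:KW}, a generic vanishing statement proved by degenerating the Fourier--Mukai transform of the Rees module $R_F \Mmod$ over $\lambda^{-1}(0)$ to a transform of the characteristic sheaf $\shC(\Mmod, F)$ and using that its support has pure dimension $g$ for holonomic $\Mmod$; the structure theorem~\ref{thm:h-linear}, which makes every component of $S^{\ell}(\Mmod)$ a translate of $\im \fsh$, so that one can restrict along $\fsh$, apply Laumon's identity $\derL(\fsh)^{\ast} \circ \FM_A = \FM_B \circ \fp$, and induct on $\dim A$ (Lemma~\ref{lem:t-structure1}); the Fourier--Mukai inversion formula for the converse bound (Lemma~\ref{lem:t-structure2}); and duality for part~(b). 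To repair your argument you would need an independent proof of Theorem~\ref{thm:c-t}, which the paper does not supply and which is essentially as hard as the statement you are trying to prove.
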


The proof (in \subsecref{subsec:codimension}) uses the structure theorem above,
together with certain properties of the
Fourier-Mukai transform established by Laumon. Theorem~\ref{thm:h-t} can be
reformulated using the theory of \define{perverse coherent sheaves}, developed by 
Kashiwara \cite{Kashiwara} and by Arinkin and Bezrukavnikov \cite{AB}. In fact, there is
a perverse t-structure on $\Dbcoh(\OAsh)$ with the property that
\[
	\mDtcoh{\leq 0}(\OAsh) = \menge{E \in \Dbcoh(\OAsh)}%
		{\text{$\codim \Supp \shH^k E \geq 2k$ for every $k \in \ZZ$}};
\]
it corresponds to the supporting function $m = \left\lfloor \tfrac{1}{2} \codim
\right\rfloor$ on the topological space of the scheme $\Ash$, in Kashiwara's
terminology. Its heart $\mCoh(\OAsh)$ is the abelian category of \define{$m$-perverse
coherent sheaves}.

\begin{theorem} \label{thm:h-t-FM}
Let $\Mmod \in \Dbh(\Dmod_A)$ be a holonomic complex on $A$.
\begin{aenumerate}
\item One has $\Mmod \in \Dth{\leq k}(\Dmod_A)$ iff $\FM_A(\Mmod) \in \mDtcoh{\leq
k}(\OAsh)$.
\item Similarly, $\Mmod \in \Dth{\geq k}(\Dmod_A)$ iff $\FM_A(\Mmod) \in \mDtcoh{\geq
k}(\OAsh)$.
\item In particular, $\Mmod$ is a single holonomic $\Dmod_A$-module iff
its Fourier-Mukai transform $\FM_A(\Mmod)$ is an $m$-perverse coherent sheaf on $\Ash$.
\end{aenumerate}
\end{theorem}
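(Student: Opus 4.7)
The plan is to combine Theorem~\ref{thm:h-t} with Laumon's base-change formula for the Fourier--Mukai transform and Kashiwara's defining conditions for the perverse coherent t-structure.

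First, Laumon's fiber formula asserts that for any $\Mmod \in \Dbh(\Dmod_A)$ and any closed point $(L,\nabla) \in \Ash$, the derived restriction $L(L,\nabla)^* \FM_A(\Mmod)$ computes, up to a fixed degree shift, the de Rham hypercohomology $\derR\Gamma\bigl(A, \DR_A(\Mmod \tensor (L,\nabla))\bigr)$. Consequently, up to the same shift in $k$, the cohomology support locus $S^k(\Mmod)$ coincides with the derived-fiber support
\[ T^k\bigl(\FM_A(\Mmod)\bigr) = \menge{(L,\nabla) \in \Ash}{H^k\bigl(\FM_A(\Mmod) \Ltensor k(L,\nabla)\bigr) \neq 0}, \]
and the condition ``$\codim S^k(\Mmod) \geq 2k$ for every $k \in \ZZ$'' from Theorem~\ref{thm:h-t}(a) translates to ``$\codim T^k(\FM_A \Mmod) \geq 2k$ for every $k$''.

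Second, the condition ``$\codim T^k(E) \geq 2k$ for every $k$'' is Kashiwara's own definition of ${}^m\Dtcoh{\leq 0}(\OAsh) = \mDtcoh{\leq 0}(\OAsh)$ for the supporting function $m = \lfloor \tfrac{1}{2}\codim \rfloor$: it says exactly that $H^k(Li_y^* E) = 0$ whenever $k > m(\overline{\{y\}})$, for every point $y$. Its agreement with the support characterization $\codim \Supp \shH^k(E) \geq 2k$ stated in the paper is a homological comparison lemma. The easy direction follows from the Tor spectral sequence, yielding $T^k(E) \subseteq \bigcup_{j \geq k} \Supp \shH^j(E)$. The converse proceeds by descending induction on $N = \max\{k : \shH^k(E) \neq 0\}$: Nakayama gives $T^N(E) = \Supp \shH^N(E)$ (since $\shH^N$ is top), so $\codim \Supp \shH^N(E) \geq 2N$; then the truncation triangle $\tau_{<N} E \to E \to \shH^N(E)[-N] \to$, derived-tensored with $k(y)$, yields $T^k(\tau_{<N} E) \subseteq T^k(E) \cup \Supp \shH^N(E)$ for each $k$, and the inductive hypothesis applies to $\tau_{<N} E$.

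Combining Steps 1 and 2 gives part (a). Part (b) is proved by the same strategy, using Kashiwara's dual definition of ${}^m\Dtcoh{\geq 0}$ in terms of $Ri_y^!$: the condition ``$\codim S^k(\Mmod) \geq -2k$ for every $k$'' from Theorem~\ref{thm:h-t}(b) translates, via Laumon and Grothendieck--Serre duality on the smooth variety $\Ash$ (together with the pairing $m + m^* = \codim$ between $m = \lfloor \tfrac{1}{2}\codim \rfloor$ and its dual $m^* = \lceil \tfrac{1}{2}\codim \rceil$), into the local-cohomology condition defining $\mDtcoh{\geq 0}(\OAsh)$. Part (c) is immediate from (a) and (b). The main obstacle is the ${}^m\Dtcoh{\geq 0}$ side of the argument: while the ${}^m\Dtcoh{\leq 0}$ side is elementary once one identifies $T^k$ with Kashiwara's $Li_y^*$ condition, the dual side requires Grothendieck--Serre duality, and one must also carefully track the shift in Laumon's fiber formula to avoid an off-by-one error in the matching of t-structures.
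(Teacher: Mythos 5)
Your argument never leaves the circle: its essential input is Theorem~\ref{thm:h-t}, but nowhere in the paper is Theorem~\ref{thm:h-t} proved independently of the statement you are asked to prove. The introduction presents Theorem~\ref{thm:h-t-FM} as the \emph{reformulation} of Theorem~\ref{thm:h-t} in the language of perverse coherent sheaves, and the only proof actually supplied (Theorem~\ref{thm:t-structure} in \subsecref{subsec:codimension}) establishes the Fourier--Mukai version directly; the codimension statement for $S^k(\Mmod)$ is then read off from it via \eqref{eq:base-change} and Lemma~\ref{lem:m-structure}. What you have written is therefore the (correct, and already implicit in the paper) translation between the two formulations, not a proof of either. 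The genuine content lives in Lemma~\ref{lem:t-structure1} --- an induction on $\dim A$ that uses the linearity of the components of $S^k(\Mmod)$ from Theorem~\ref{thm:h-linear} to restrict $\FM_A(\Mmod)$ along $\fsh \colon \Bsh \to \Ash$ and identify the restriction with $\FM_B \bigl( \fp ( \Mmod \tensor_{\OA} (L,\nabla)) \bigr)$, together with the generic vanishing statement of Proposition~\ref{prop:KW}, proved by degenerating the Rees module to the characteristic variety --- and in Lemma~\ref{lem:t-structure2}, which rests on Laumon's inversion formula. None of this appears in your proposal.

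Separately, even granting Theorem~\ref{thm:h-t}, your part (b) has a gap that you sense (``the main obstacle is the ${}^m\mathrm{D}^{\geq 0}$ side'') but do not identify. Coherent duality exchanges the supporting function $m = \lfloor \tfrac{1}{2}\codim \rfloor$ with $\hat{m} = \lceil \tfrac{1}{2}\codim \rceil$, not with itself: combining Theorem~\ref{thm:Laumon}\ref{en:Laumon4} with part (a) applied to $\DA \Mmod$ yields that $\Mmod \in \Dth{\geq 0}(\Dmod_A)$ is equivalent to $\FM_A(\Mmod) \in \pDtcoh{\hat{m}}{\geq 0}(\OAsh)$, whereas the theorem asserts membership in $\mDtcoh{\geq 0}(\OAsh)$, a strictly larger category. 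Concretely, by Lemma~\ref{lem:m-structure} the hypothesis $\FM_A(\Mmod) \in \mDtcoh{\geq 0}(\OAsh)$ only gives $\codim \Supp \shH^i \FM_A(\DA \Mmod) \geq 2i-1$, one less than what the backward direction of part (a) requires. Closing this off-by-one is not a bookkeeping matter about shifts in Laumon's fiber formula; it requires knowing that the supports in question have \emph{even} codimension, which is exactly what the structure theorem supplies, since a linear subvariety $\im \fsh$ has codimension $2(\dim A - \dim B)$ in $\Ash$. Without that input, $\mDtcoh{\geq 0}$ and $\pDtcoh{\hat{m}}{\geq 0}$ genuinely differ, and the equivalence in (b) does not follow.
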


Using basic properties of the $m$-perverse t-structure (see
\subsecref{subsec:perverse}), it follows that the Fourier-Mukai transform of a
holonomic $\Dmod_A$-module is always concentrated in degrees $0, 1, \dotsc, \dim A$.
Moreover, $\shH^i \FM_A(\Mmod)$ is a torsion sheaf for $i > 0$; and for $i = 0$, it is a
torsion sheaf iff it is zero.

For that reason, one would expect $\shH^0 \FM_A(\Mmod)$ to be supported on all of
$\Ash$, but examples show that this fails when $\Mmod$ is pulled back from a
lower-dimensional abelian variety. This suggests our third main result, namely the
following structure theorem for \emph{simple} holonomic $\Dmod_A$-modules.

\begin{theorem} \label{thm:h-simple}
Let $\Mmod$ be a simple holonomic $\Dmod_A$-module. Then there exists an abelian
variety $B$, a surjective morphism $f \colon A \to B$ with connected fibers, and a
simple holonomic $\Dmod_B$-module $\Nmod$ with $\Supp \shH^0 \FM_B(\Nmod) = \Bsh$,
such that 
\[
	\Mmod \simeq \fu \Nmod \tensor_{\OA} (L, \nabla)
\]
for some $(L, \nabla) \in \Ash$.
\end{theorem}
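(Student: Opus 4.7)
The plan is to transfer simplicity across the Fourier--Mukai equivalence and then read off the desired descent from the support of $\FM_A(\Mmod)$. By Theorem~\ref{thm:h-t-FM}, $\FM_A$ restricts to an equivalence between the abelian category of holonomic $\Dmod_A$-modules and a heart inside $\mCoh(\OAsh)$; the image $\shf{F} \defeq \FM_A(\Mmod)$ is therefore a simple $m$-perverse coherent sheaf. A standard feature of perverse coherent hearts in the sense of Kashiwara~\cite{Kashiwara} and Arinkin--Bezrukavnikov~\cite{AB} is that each simple object is an intermediate extension from an open dense subset of its support, which is in particular \emph{irreducible}. Combining with Corollary~\ref{cor:FM-linear}, which expresses $\Supp \shf{F}$ as a finite union of linear subvarieties of $\Ash$, irreducibility forces
\[
	\Supp \shf{F} = (L, \nabla) \tensor \im \bigl( \fsh \colon \Bsh \to \Ash \bigr)
\]
for a single surjective morphism $f \colon A \to B$ of abelian varieties with connected fibers and a single $(L, \nabla) \in \Ash$.

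Since twisting by $(L, \nabla)^{-1}$ corresponds under $\FM_A$ to translation on $\Ash$, I may replace $\Mmod$ by $\Mmod \tensor_{\OA} (L, \nabla)^{-1}$ and reduce to the case $\Supp \shf{F} = \im \fsh$. The next step is descent along the closed immersion $\fsh \colon \Bsh \into \Ash$: exploiting that $\fsh_{\ast}$ is t-exact for the $m$-perverse t-structures (the perverse coherent analogue of Kashiwara's equivalence along closed immersions), I would write $\shf{F} \simeq \fsh_{\ast} \shf{G}$ for a unique $m$-perverse coherent sheaf $\shf{G}$ on $\Bsh$. Laumon's compatibility of Fourier--Mukai with smooth pullback, $\FM_A \circ \fu \simeq \fsh_{\ast} \circ \FM_B$ up to a cohomological shift, then identifies $\Mmod$ with $\fu \Nmod$, where $\Nmod \in \Dbh(\Dmod_B)$ is the object defined by $\FM_B(\Nmod) = \shf{G}$. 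Theorem~\ref{thm:h-t-FM} applied on $B$ forces $\Nmod$ to be concentrated in a single degree, i.e.\ a holonomic $\Dmod_B$-module; and faithful exactness of $\fu$ on holonomic modules (as $f$ is smooth and surjective with connected fibers) transfers simplicity from $\Mmod$ to $\Nmod$.

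It remains to arrange $\Supp \shH^0 \FM_B(\Nmod) = \Bsh$, which I handle by induction on $\dim A$. If $\dim B = \dim A$ then $f$ is an isomorphism, $\Supp \shf{F} = \Ash$, and automatically $\Supp \shH^0 \shf{F} = \Ash$ since the higher cohomology sheaves of $\shf{F}$ have strictly smaller support by $m$-perversity; in that case $\Nmod = \Mmod$ works. Otherwise $\dim B < \dim A$, and if $\Supp \shH^0 \FM_B(\Nmod) \subsetneq \Bsh$ the inductive hypothesis applied to the simple $\Nmod$ yields $\Nmod \simeq g^{\ast} \Nmod' \tensor_{\shf{O}_B} (M, \nabla')$ for a surjective $g \colon B \to C$ with connected fibers and some $(M, \nabla') \in \Bsh$; composing and absorbing the twists produces the required factorization $\Mmod \simeq (g \circ f)^{\ast} \Nmod' \tensor_{\OA} \bigl( (L, \nabla) \tensor \fu(M, \nabla') \bigr)$ through the strictly smaller abelian variety $C$.

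The main obstacle is the first paragraph: confirming that a simple object of $\mCoh(\OAsh)$ has irreducible support, and setting up the descent along closed immersions of smooth abelian varieties compatibly with Laumon's formula at the level of $m$-perverse coherent sheaves. Both are essentially formal once the theory of perverse coherent sheaves is properly installed, but the specific supporting function $m(Z) = \lfloor \codim Z / 2 \rfloor$ and its interaction with the codimension-$2(\dim A - \dim B)$ embedding $\fsh$ require some care.
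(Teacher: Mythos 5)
Your plan founders on two steps that look formal but are not. First, the descent along the closed immersion $\fsh \colon \Bsh \into \Ash$: there is no Kashiwara-type equivalence for coherent sheaves. A coherent sheaf (or complex) on $\Ash$ whose \emph{set-theoretic} support is $\im \fsh$ need not lie in the essential image of $\derR \fsh_{\ast}$ --- think of $\shO_{\Ash}/\shI^2$ for $\shI$ the ideal of $\im\fsh$ --- so knowing $\Supp \FM_A(\Mmod) = \im\fsh$ does not let you write $\FM_A(\Mmod) \simeq \derR\fsh_{\ast}\shG$, and without that you cannot invoke Theorem~\ref{thm:Laumon}\ref{en:Laumon3} to produce $\Nmod$ with $\Mmod \simeq \fu\Nmod$. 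Second, the claim that a simple object of $\mCoh(\OAsh)$ has irreducible support is unsupported: the classification of simple perverse coherent sheaves as intermediate extensions in \cite{AB} requires the perversity to be \emph{strictly} monotone and comonotone, and $m(x) = \lfloor \tfrac{1}{2}\codim(x)\rfloor$ is not strictly monotone (it is constant as $\codim$ increases by one). Moreover, it is not even clear that $\FM_A(\Mmod)$ is simple in $\mCoh(\OAsh)$: Theorem~\ref{thm:h-t-FM} identifies holonomic $\Dmod_A$-modules with a full subcategory of $\mCoh(\OAsh)$, not with the whole heart, so a subobject of $\FM_A(\Mmod)$ in $\mCoh(\OAsh)$ need not come from a holonomic submodule of $\Mmod$.

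The paper circumvents both problems by staying on the $\Dmod$-module side. It uses Proposition~\ref{prop:surprise} (both $E = \FM_A(\Mmod)$ and its dual lie in $\mCoh(\OAsh)$) to show that the lowest nonvanishing cohomology $\shH^r(E)$ has support of codimension exactly $2r$, whose codimension-$2r$ components are components of $S^r(\Mmod)$ and hence, after a twist, of the form $\im\fsh$ with $\dim B = \dim A - r$. It then considers the adjunction morphism $\Mmod \to \fsi\fp\Mmod$ and proves that the induced map $\Mmod \to \fu\shH^r(\fp\Mmod)$ is nonzero, by transporting a putative factorization through $\fsi\tau_{\leq r-1}(\fp\Mmod)$ to the Fourier--Mukai side and contradicting a Nakayama-type statement at the generic point of $\im\fsh$ (Lemma~\ref{lem:support}). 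A simple quotient $\Nmod$ of $\shH^r(\fp\Mmod)$ then admits a nonzero map $\Mmod \to \fu\Nmod$, and since $\fu\Nmod$ is again simple (Lemma~\ref{lem:inverse-simple}), this map is an isomorphism; the statement about $\Supp\shH^0\FM_B(\Nmod)$ falls out of $\FM_A(\fu\Nmod) \simeq \derR\fsh_{\ast}\FM_B(\Nmod)\decal{-r}$. If you want to salvage your approach, you would need to prove directly that $\FM_A(\Mmod)$ is scheme-theoretically supported on $\im\fsh$, which is essentially equivalent to the conclusion you are after.
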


This result again follows from the basic properties of the Fourier-Mukai transform,
together with the following interesting fact about the $m$-perverse t-structure
(see Proposition~\ref{prop:surprise}): If a complex of coherent sheaves $E$ and its dual
complex $\derR \shHom(E, \OAsh)$ both belong to $\mCoh(\OAsh)$, and if $r \in \ZZ$
denotes the least integer with $\shH^r E \neq 0$, then $\codim \Supp \shH^r E = 2r$. 

One application of Theorem~\ref{thm:h-simple} is to describe simple holonomic
$\Dmod_A$-modules with Euler characteristic zero. Recall that the \define{Euler
characteristic} of a coherent algebraic $\Dmod_A$-module $\Mmod$ is the integer
\[
	\chi(A, \Mmod) = \sum_{k \in \ZZ} (-1)^k \dim \HH^k \bigl( A, \DR_A(\Mmod) \bigr).
\]
When $\Mmod$ is holonomic, we have $\chi(A, \Mmod) \geq 0$ as a consequence of
Theorem~\ref{thm:h-t-FM}. In the regular case, the following result has
independently been proved by Weissauer \cite{Weissauer}*{Theorem~2}.
	
\begin{corollary} \label{cor:h-simple}
Let $\Mmod$ be a simple holonomic $\Dmod_A$-module with $\chi(A, \Mmod) = 0$.
Then there is a surjective morphism with connected fibers $f \colon A \to B$ to a
lower-dimensional abelian variety, and a simple holonomic $\Dmod_B$-module $\Nmod$,
such that
\[	
	\Mmod \simeq \fu \Nmod \tensor_{\OA} (L, \nabla)
\]
for a suitable point $(L, \nabla) \in \Ash$. Moreover, we may assume that $\chi(B,
\Nmod) > 0$.
\end{corollary}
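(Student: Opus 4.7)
The plan is to deduce this corollary from Theorem~\ref{thm:h-simple} together with Theorem~\ref{thm:h-t-FM}. First I would apply Theorem~\ref{thm:h-simple} to the simple holonomic $\Dmod_A$-module $\Mmod$, producing a surjective morphism $f \colon A \to B$ with connected fibers, a simple holonomic $\Dmod_B$-module $\Nmod$ satisfying $\Supp \shH^0 \FM_B(\Nmod) = \Bsh$, and a point $(L, \nabla) \in \Ash$ such that $\Mmod \simeq \fu \Nmod \tensor_{\OA} (L, \nabla)$. It then remains to check the two additional claims that $\chi(B, \Nmod) > 0$ and that $f$ is not an isomorphism (so that $\dim B < \dim A$).

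For the first claim, I would exploit the fact that $\Nmod$ is a single holonomic module, so by Theorem~\ref{thm:h-t-FM} its Fourier-Mukai transform $\FM_B(\Nmod)$ is an $m$-perverse coherent sheaf on $\Bsh$. Consequently $\shH^i \FM_B(\Nmod)$ vanishes for $i < 0$ and is a torsion sheaf for $i > 0$, whereas $\shH^0 \FM_B(\Nmod)$ has full support by construction. At a sufficiently general point $(L', \nabla') \in \Bsh$, Theorem~\ref{thm:h-t} applied to $\Nmod$ forces $\HH^k(B, \DR(\Nmod \tensor_{\shf{O}_B} (L', \nabla')))$ to vanish for $k \neq 0$, and the standard base-change relation between $\FM_B(\Nmod)$ and these cohomology groups identifies $\dim \HH^0$ at a generic $(L', \nabla')$ with the generic rank of $\shH^0 \FM_B(\Nmod)$, which is strictly positive. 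Deformation invariance of the Euler characteristic over the connected variety $\Bsh$ then yields $\chi(B, \Nmod) > 0$.

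For the second claim, a short contradiction argument suffices. Using deformation invariance once more to absorb the twist by $(L, \nabla)$, we have $\chi(A, \Mmod) = \chi(A, \fu \Nmod)$. If $f$ were an isomorphism, we would obtain $\chi(A, \fu \Nmod) = \chi(B, \Nmod) > 0$, contradicting the hypothesis $\chi(A, \Mmod) = 0$. Since $f$ is surjective with connected fibers, the only alternative is $\dim B < \dim A$. The main subtlety is the identification of $\chi(B, \Nmod)$ with the generic rank of $\shH^0 \FM_B(\Nmod)$; once that base-change point is in hand, the rest is a routine combination of the theorems already established with the fact that $\chi$ is constant on the connected family $\liste{\Mmod \tensor (L, \nabla)}_{(L, \nabla) \in \Ash}$.
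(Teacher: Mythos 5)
Your argument is correct and relies on the same two pillars as the paper's proof: the structure theorem for simple holonomic modules (Theorem~\ref{thm:h-simple}, i.e.\ Theorem~\ref{thm:simple-support}) and the identification, via base change and deformation invariance, of $\chi(B,\Nmod)$ with the generic rank of $\shH^0 \FM_B(\Nmod)$. The only difference is in how $\dim B < \dim A$ is obtained: the paper first runs the generic-rank computation on $A$ itself to see that $\Supp \shH^0 \FM_A(\Mmod)$ is proper, then invokes Proposition~\ref{prop:surprise} to conclude $\shH^0 \FM_A(\Mmod) = 0$, so that the integer $r$ in Theorem~\ref{thm:simple-support} is at least $1$ and $\dim B = \dim A - r < \dim A$ comes for free; you instead apply the structure theorem as a black box and exclude the case $B = A$ afterwards by the contradiction $0 = \chi(A,\Mmod) = \chi(B,\Nmod) > 0$. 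Both routes are valid and of comparable length; the paper's version has the small bonus of exhibiting the vanishing $\shH^0 \FM_A(\Mmod) = 0$ explicitly, while yours avoids any direct appeal to Proposition~\ref{prop:surprise} at this stage (it is of course still used inside the proof of the structure theorem).
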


\subsection{Acknowledgements}

This work was supported by the World Premier International Research Center Initiative
(WPI Initiative), MEXT, Japan, and by NSF grant DMS-1100606.  I thank Mihnea Popa and
Pierre Schapira for their comments about the paper, and Takuro Mochizuki, Kiyoshi
Takeuchi, Giovanni Morando, and Kentaro Hori for useful discussions. I am also very
grateful to my parents-in-law for their hospitality during the preparation of the
manuscript.

\section{The Fourier-Mukai transform}

\subsection{The case of $\Dmod$-modules}

In this section, we recall the definition of the generalized Fourier-Mukai transform,
introduced by Rothstein \cite{Rothstein} and Laumon \cite{Laumon}. It is defined in
the algebraic category, and so we begin by explaining why $\Ash$ is a complex
algebraic variety. As always, let $A$ be a complex abelian variety. The moduli space
$\Ash$ of algebraic line bundles with integrable connection on $A$ naturally has the
structure of a quasi-projective algebraic variety: on the dual abelian variety $\Ah =
\Pic^0(A)$, there is a canonical extension of vector bundles
\begin{equation} \label{eq:extension}
	0 \to \Ah \times H^0(A, \OmA{1}) \to E(A) \to \Ah \times \CC \to 0,
\end{equation}
and $\Ash$ is isomorphic to the preimage of $\Ah \times \{1\}$ inside of $E(A)$.
The projection
\[
	\pi \colon \Ash \to \Ah,  \quad (L, \nabla) \mapsto L,
\] 
is thus a torsor for the trivial bundle $\Ah \times H^0(A, \OmA{1})$; this
corresponds to the fact that $\nabla + \omega$ is again an integrable connection for any
$\omega \in H^0(A, \OmA{1})$. Note that $\Ash$ is a group under tensor product, with
unit element the trivial line bundle $(\OA, d)$.

The generalized Fourier-Mukai transform takes bounded complexes of coherent algebraic
$\Dmod_A$-modules to bounded complexes of algebraic coherent sheaves on $\Ash$;
we briefly describe it following the presentation in \cite{Laumon}*{\S3}. Let $P$
denote the normalized Poincar\'e bundle on the product $A \times \Ah$.
Since $\Ash$ is the moduli space of line bundles with integrable connection on $A$, the
pullback $\Psh = (\id_A \times \pi)^{\ast} P$ of the Poincar\'e bundle to the product
$A \times \Ash$ is endowed with a universal integrable connection 
\[
	\nablash \colon \Psh \to \Omega_{A \times \Ash / \Ash}^1 \tensor \Psh 
\]
relative to $\Ash$.
Given any algebraic left $\Dmod_A$-module $\Mmod$, interpreted as a quasi-coherent
sheaf of $\OA$-modules with integrable connection $\nabla \colon \Mmod \to \Omega_A^1
\tensor \Mmod$, the tensor product $p_1^* \Mmod \tensor_{\OA} (\Psh, \nablash)$
inherits a natural integrable connection relative to $\Ash$. We then define the
\define{Fourier-Mukai transform} of $\Mmod$ by the formula
\begin{equation} \label{eqn:laumon}
	\FM_A(\Mmod) = \derR (p_2)_{\ast} \DR_{A \times \Ash / \Ash}
		 \bigl( \, p_1^* \Mmod \tensor (\Psh, \nablash) \bigr)
\end{equation}
where the relative de Rham complex 
\[
	\Bigl\lbrack
		p_1^{\ast} \Mmod \tensor \Psh \to \Omega^1_{A\times \Ash/ \Ash} 
			\tensor p_1^{\ast} \Mmod \tensor \Psh \to \dotsb \to
 		\Omega^g_{A\times \Ash/ \Ash} \tensor p_1^{\ast} \Mmod \tensor \Psh
	\Bigr\rbrack
\]
is placed in degrees $-g, \ldots,0$ as usual. Since every differential in the complex
is $\OAsh$-linear, it follows that $\FM_A(\Mmod)$ is a complex of
algebraic quasi-coherent sheaves on $\Ash$. Finally, Laumon
\cite{Laumon}*{Th\'eor\`em~3.2.1 and Corollaire~3.2.5} proves that this operation
induces an equivalence
\begin{equation} \label{eq:FM}
	\FM_A \colon \Dbcoh(\Dmod_A) \to \Dbcoh(\OAsh)
\end{equation}
between the bounded derived category of coherent algebraic $\Dmod_A$-modules and 
the bounded derived category of algebraic coherent sheaves on $\Ash$. Rothstein
obtained the same result by a different method in \cite{Rothstein}*{Theorem 6.2}.

\begin{note}
Since $A$ is a complex projective variety, the category of coherent analytic
$\Dmod$-modules on $A$ is equivalent to the category of coherent algebraic
$\Dmod$-modules by a version of the GAGA theorem. On the other hand, it is essential
to consider only \emph{algebraic} coherent sheaves on $\Ash$ in \eqref{eq:FM},
because $\Ash$ is not projective.
\end{note}

We list some basic properties of the Fourier-Mukai transform. For $f \colon A \to B$
a surjective morphism of abelian varieties, one has the \define{direct image functor}
\[
	\fp \colon \Dbcoh(\Dmod_A) \to \Dbcoh(\Dmod_B), \qquad
		\fp \Mmod = \derR \fl \DR_{A/B}(\Mmod),
\]
where $\DR_{A/B}(\Mmod)$ denotes the relative de Rham complex 
\[
	\DR_{A/B}(\Mmod) = \Bigl\lbrack
		\Mmod \to \Omega_{A/B}^1 \tensor \Mmod \to \dotsb \to 
			\Omega_{A/B}^r \tensor \Mmod \Bigr\rbrack,
\]
placed in degrees $-r, \dotsc, 0$, and $r = \dim A - \dim B$ is the relative
dimension of $f$. For holonomic complexes, we have an induced functor
\[
	\fp \colon \Dbh(\Dmod_A) \to \Dbh(\Dmod_B)
\]
since direct images under algebraic morphisms preserve holonomicity
\cite{HTT}*{Theorem~3.2.3}.  We also use the \define{shifted inverse image functor}
\[
	\fsi = \derL \fu \decal{\dim A - \dim B} \colon \Dbc(\Dmod_B) \to \Dbc(\Dmod_A),
\]
which again preserves holonomic complexes since $f$ is smooth. Finally, we use the
\define{duality functor}
\[
	\DA \colon \Dbcoh(\Dmod_A) \to \Dbcoh(\Dmod_A)^{\opp}, \quad
		\DA(\Mmod) = \derR \shHom_{\Dmod_A} \bigl( \Mmod, 
		(\Omega_A^{g})^{-1} \tensor \Dmod_A \bigr) \decal{g}.
\]
Note that a $\Dmod_A$-module $\Mmod$ is holonomic exactly when $\DA(\Mmod)$ is
again a $\Dmod_A$-module, viewed as a complex concentrated in degree zero.

\begin{theorem}[Laumon] \label{thm:Laumon}
The Fourier-Mukai transform for $\Dmod$-modules on abelian varieties has the
following properties.
\begin{aenumerate}
\item \label{en:Laumon1}
For $(L, \nabla) \in \Ash$, denote by $t_{(L, \nabla)} \colon \Ash \to \Ash$ the
translation morphism. Then one has a canonical and functorial isomorphism
\[
	\FM_A \bigl( \argbl \tensor_{\OA} (L, \nabla) \bigr) =
		\derL (t_{(L, \nabla)})^{\ast} \circ \FM_A.
\]
\item \label{en:Laumon4}
One has a canonical and functorial isomorphism
\[
	\FM_A \circ \DA = \langle -1_{\Ash} \rangle^{\ast} \, 
		\derR \shHom \bigl( \FM_A(\argbl), \OAsh \bigr).
\]
\item \label{en:Laumon2}
For a surjective morphism $f \colon A \to B$, denote by $\fsh \colon \Bsh \to \Ash$
the induced morphism. Then one has a canonical and functorial isomorphism
\[
	\derL (\fsh)^{\ast} \circ \FM_A = \FM_B \circ \fp.
\]
\item \label{en:Laumon3}
In the same situation, one has a canonical and functorial isomorphism
\[
	\derR \fsh_{\ast} \circ \FM_B = \FM_A \circ \fsi.
\]
\end{aenumerate}
\end{theorem}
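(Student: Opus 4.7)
The plan is to verify each statement by reducing to a universal property of the tautological line bundle with connection $(\Psh, \nablash)$, combined with either flat base change or Grothendieck--Serre duality, in the spirit of Laumon's original argument.

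For \ref{en:Laumon1}, the universal property of $\Ash$ as a moduli space yields an isomorphism
\[
(\id_A \times t_{(L,\nabla)})^{\ast}(\Psh, \nablash) \simeq (\Psh, \nablash) \tensor p_1^{\ast}(L, \nabla)
\]
of line bundles on $A \times \Ash$ carrying integrable connections relative to $\Ash$. Substituting this into the defining formula \eqref{eqn:laumon}, pulling $p_1^{\ast}(L, \nabla)$ through the relative de Rham complex (whose differentials are $\OAsh$-linear), and applying flat base change along $t_{(L,\nabla)}$ yields the claim.

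For \ref{en:Laumon2}, the essential input is the identity
\[
(\id_A \times \fsh)^{\ast}(\Psh_A, \nablash_A) \simeq (f \times \id_{\Bsh})^{\ast}(\Psh_B, \nablash_B)
\]
on $A \times \Bsh$, which reflects the universal property of $\Bsh$. Flat base change for the cartesian square whose horizontal arrows are $\id_A \times \fsh$ and $\fsh$ and whose vertical arrows are the second projections $p_2$ then reduces $\derL (\fsh)^{\ast} \FM_A(\Mmod)$ to $\derR (p_2)_{\ast} \DR_{A \times \Bsh / \Bsh}\bigl( p_1^{\ast} \Mmod \tensor (f \times \id_{\Bsh})^{\ast}(\Psh_B, \nablash_B) \bigr)$. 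Factoring $p_2 \colon A \times \Bsh \to \Bsh$ through $f \times \id_{\Bsh}$ and invoking the compatibility of the relative de Rham functor with direct image along the smooth morphism $f$, together with the projection formula for $\Psh_B$, identifies this complex with $\FM_B(\fp \Mmod)$. For \ref{en:Laumon3}, I would run an analogous base-change computation, now using the projection formula and Grothendieck--Serre duality for the smooth morphism $f \times \id_{\Bsh}$ to move $\derR (\fsh)_{\ast}$ inside the relative de Rham expression; the definition $\fsi = \derL \fu \decal{\dim A - \dim B}$ produces exactly the cohomological shift needed. Alternatively, it should follow from \ref{en:Laumon2} via the adjunction $(\derL (\fsh)^{\ast}, \derR (\fsh)_{\ast})$ paired with the known adjunction between $\fp$ and $\fsi$ (up to shift) on the $\Dmod$-side; since $f$ is smooth, the relative dualizing sheaf is trivial up to shift, so the bookkeeping matches.

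The main obstacle is expected to be \ref{en:Laumon4}, which interchanges coherent Grothendieck--Serre duality on $\Ash$ with $\Dmod$-module duality on $A$. The key geometric input is the self-duality of the Poincar\'e bundle, which upgrades to $(\Psh, \nablash)^{-1} \simeq \langle -1_{\Ash} \rangle^{\ast} (\Psh, \nablash)$ once one tracks the behavior of the connection carefully. The plan is to compute $\FM_A(\DA \Mmod)$ by applying relative Grothendieck--Serre duality along the smooth projective morphism $p_2 \colon A \times \Ash \to \Ash$ (whose relative dualizing complex is $\omega_{A \times \Ash / \Ash} \decal{\dim A}$) to the complex $\DR_{A \times \Ash / \Ash}\bigl( p_1^{\ast} \DA \Mmod \tensor (\Psh, \nablash) \bigr)$ and to match the outcome with $\derR \shHom(\FM_A \Mmod, \OAsh)$ pulled back along $\langle -1_{\Ash}\rangle$, using the standard local duality pairing between $\DR_{A \times \Ash / \Ash}$ of a holonomic $\Dmod$-module and of its $\Dmod$-dual. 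Keeping all the shifts, the $\omega_A$-twist, and the sign inversion from Poincar\'e self-duality consistent is where the bulk of the work lies.
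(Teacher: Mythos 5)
Your proposal is correct in outline, but it is worth pointing out that the paper does not actually prove this theorem: it disposes of \ref{en:Laumon1} as an immediate consequence of the properties of the Poincar\'e bundle on $A \times \Ash$, and for \ref{en:Laumon4}, \ref{en:Laumon2}, \ref{en:Laumon3} it simply cites Laumon (Propositions~3.3.2 and~3.3.4 of \cite{Laumon}, with a sign correction). What you have written is essentially a reconstruction of Laumon's arguments rather than an alternative to them. Your treatment of \ref{en:Laumon1} is complete as stated: the see-saw identity $(\id_A \times t_{(L,\nabla)})^{\ast}(\Psh,\nablash) \simeq (\Psh,\nablash) \tensor p_1^{\ast}(L,\nabla)$ plus flat base change along $t_{(L,\nabla)}$ does the job. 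For \ref{en:Laumon2} the one step you gloss over that requires genuine work is the factorization of $\DR_{A \times \Bsh/\Bsh}$ through $\DR_{A \times \Bsh/B \times \Bsh}$ followed by $\DR_{B \times \Bsh/\Bsh}$ (the filtration of the relative de Rham complex attached to the composition $A \to B \to \pt$), which is what makes "$\derR(f\times\id)_{\ast}$ of the de Rham complex $=$ de Rham complex of $\fp\Mmod$" precise. For \ref{en:Laumon3} your second suggestion is the cleaner one and deserves emphasis: since $\FM_A$ and $\FM_B$ are equivalences, $\derL(\fsh)^{\ast} \dashv \derR\fsh_{\ast}$, and $\fp \dashv \fsi$ for the proper morphism $f$ \cite{HTT}*{Theorem~3.2.16}, statement \ref{en:Laumon3} follows formally from \ref{en:Laumon2} by passing to right adjoints, with no further base-change computation. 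Your plan for \ref{en:Laumon4} (relative Grothendieck--Serre duality along $p_2$ combined with the $\Dmod$-module duality pairing on de Rham complexes and the identity $(\id_A \times \langle -1_{\Ash}\rangle)^{\ast}\Psh \simeq \Psh^{-1}$) is exactly Laumon's; as you say, the content there is entirely in the bookkeeping of shifts and twists, and carrying it out in full would roughly reproduce the proof of \cite{Laumon}*{Proposition~3.3.4}.
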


\begin{proof}
\ref{en:Laumon1} follows immediately from the properties of the Poincar\'e bundle on
$A \times \Ash$. \ref{en:Laumon2} and \ref{en:Laumon3} are proved in
\cite{Laumon}*{Proposition~3.3.2}; note that ``$g-1-g_2$'' should read ``$g_1 -
g_2$.'' Lastly, \ref{en:Laumon4} is contained in \cite{Laumon}*{Proposition~3.3.4}.
\end{proof}

\subsection{The Rees algebra}

Let $A$ be a complex abelian variety of dimension $g$. The sheaf $\Dmod_A$ of linear
differential operators is naturally filtered by the order of
differential operators, and we consider the associated Rees algebra
\[
	\Rmod_A = R_F \Dmod_A = \bigoplus_{k=0}^{\infty} F_k \Dmod_A \cdot z^k
		\subseteq \Dmod_A \tensor_{\OA} \OA \lbrack z \rbrack.
\]
More concretely, let $\partial_1, \dotsc, \partial_g \in H^0(A, \shT_A)$ be a basis for
the space of tangent vector fields on $A$; then as a sheaf of algebras, $\Rmod_A$ is
generated over $\OA \lbrack z \rbrack$ by the operators $\delta_1, \dotsc, \delta_g$, where
$\delta_i = z \partial_i$, subject to the relations
\[
	\lbrack \delta_i, \delta_j \rbrack = 0 \qquad \text{and} \qquad
		\lbrack \delta_i, f \rbrack = z \cdot \partial_i f.
\]
It is easy to see that we have $\Rmod_A / (z-1) \Rmod_A \simeq \Dmod_A$, and $\Rmod_A / 
z \Rmod_A \simeq \Sym \shT_A$.

\begin{definition}
An algebraic \define{$\Rmod_A$-module} is a sheaf of left $\Rmod_A$-modules that is
quasi-coherent over $\OA$. An $\Rmod_A$-module is called \define{strict} if it has no
$z$-torsion.
\end{definition}

\begin{example}
Let $(\Mmod, F)$ be a filtered $\Dmod_A$-module; then the Rees module
\[
	R_F \Mmod = \bigoplus_{k \in \ZZ} F_k \Mmod \cdot z^k
\]
is a strict $\Rmod_A$-module; it is coherent over $\Rmod_A$ iff the filtration $F =
F_{\bullet} \Mmod$ is good.
\end{example}

An equivalent point of view is the following. On the product 
\[
	A \times \CC = A \times_{\Spec \CC} \Spec \CC \lbrack z \rbrack,
\]
consider the subsheaf of $\Dmod_{A \times \CC / \CC}$ generated by $z \shT_{A \times
\CC / \CC}$. For any quasi-coherent sheaf of $\shO_{A \times
\CC}$-modules with a left action by that sheaf of operators, the pushforward to $A$ is
then naturally an $\Rmod_A$-module. Conversely, any algebraic $\Rmod_A$-module
$\Mmod$ gives rise to a quasi-coherent sheaf $\Mmodt$ on $A \times \CC$ that has
the structure of a left module over the above sheaf of operators. 

Given an $\Rmod_A$-module $\Mmod$, we have a $\CC \lbrack z \rbrack$-linear morphism
of sheaves
\[
	\nabla \colon \Mmodt \to \frac{1}{z} \Omega_{A \times \CC/\CC}^1 
		\tensor_{\shO_{A \times \CC}} \Mmodt, \qquad
		\nabla m = \sum_{i=1}^g \frac{\omega_i}{z} \tensor \delta_i m,
\]
where $\omega_1, \dotsc, \omega_g \in H^0(A, \Omega_A^1)$ is the basis dual to
$\partial_1, \dotsc, \partial_g \in H^0(A, \shT_A)$. The \define{de Rham complex} of
$\Mmod$ is the resulting complex
\begin{equation} \label{eq:DR-R}
	\DR_A(\Mmod) = \Bigl\lbrack 
		\Mmodt \to \frac{1}{z} \Omega_{A \times \CC/\CC}^1 \tensor \Mmodt 
			\to \dotsb \to \frac{1}{z^g} \Omega_{A \times \CC/\CC}^g \tensor \Mmodt,
	\Bigr\rbrack
\end{equation}
placed in degrees $-g, \dotsc, 0$, whose differentials are given by the formula
\[
	\frac{\omega}{z^k} \tensor m \mapsto 
		(-1)^g \frac{d \omega}{z^{k+1}} \tensor zm + (-1)^{g+k} 
			\frac{\omega}{z^k} \wedge \nabla m.
\]

\subsection{The moduli space of generalized connections}

In this section, we introduce the moduli space of generalized connections on $A$; it
will be used in the following section to define a Fourier-Mukai transform for
algebraic $\Rmod_A$-modules. As explained in \cite{Bonsdorff}, the idea of this
construction is originally due to Deligne and Simpson.

\begin{definition}
Let $X$ be a complex manifold, and $\lambda \colon X \to \CC$ a holomorphic function. 
A \define{generalized connection with parameter $\lambda$}, or more briefly a
\define{$\lambda$-connection}, on a locally free sheaf of $\OX$-modules $\shE$ is a
$\CC$-linear morphism of sheaves 
\[
	\nabla \colon \shE \to \Omega_X^1 \tensor_{\OX} \shE
\]
that satisfies the Leibniz rule with parameter $\lambda$, which is to say that
\[
	\nabla(f \cdot s) = f \cdot \nabla s + df \tensor \lambda s
\]
for local sections $f \in \OX$ and $s \in \shE$. A $\lambda$-connection is called
\define{integrable} if its $\OX$-linear curvature operator $\nabla \circ \nabla
\colon \shE \to \Omega_X^2 \tensor_{\OX} \shE$ is equal to zero.
\end{definition}

\begin{example}
An integrable $1$-connection is an integrable connection in the usual sense; an
integrable $0$-connection is the same thing as the structure of a Higgs bundle on
$\shE$.
\end{example}

On an abelian variety $A$, the moduli space $E(A)$ of line bundles with
integrable $\lambda$-connection (for arbitrary $\lambda \in \CC$) may be constructed 
as follows. Observe first that a $\lambda$-connection on a line bundle $L \in
\Pic(A)$ is integrable iff $L \in \Pic^0(A)$. To construct the moduli
space, let $\mm_A \subseteq \OA$ denote the ideal sheaf of the unit element
$0_A \in A$.  Restriction of differential forms induces an isomorphism
\[
	\mm_A / \mm_A^2 = H^0(A, \Omega_A^1) \tensor \OA / \mm_A,
\]
and therefore determines an extension of coherent sheaves
\[
	0 \to H^0(A, \Omega_A^1) \tensor \OA / \mm_A \to \OA / \mm_A^2 
		\to \OA / \mm_A \to 0.
\]
Let $P$ be the normalized Poincar\'e bundle on the product $A \times \Ah$, and denote
by $\derR \Phi_P \colon \Dbcoh(\OA) \to \Dbcoh(\OAh)$ the Fourier-Mukai transform.
Then $\derR \Phi_P \bigl( \OA / \mm_A^2 \bigr)$ is a locally free sheaf $\shE(A)$,
and so we obtain an extension of locally free sheaves 
\begin{equation} \label{eq:ext-sheaf}
	0 \to H^0(A, \Omega_A^1) \tensor \OAh \to \shE(A) \to \OAh \to 0
\end{equation}
on the dual abelian variety $\Ah$. The resulting vector bundle extension
\[
	0 \to \Ah \times H^0(A, \Omega_A^1) \to E(A) \to \Ah \times \CC \to 0
\]
defines an algebraic vector bundle $E(A)$ on $\Ah$; by construction, it comes
with two algebraic morphisms $\pi \colon E(A) \to \Ah$ and $\lambda \colon E(A) \to
\CC$.

Now the claim is that $E(A)$ is the moduli space of line bundles with integrable
$\lambda$-connection, with $\pi \colon E(A) \to \Ah$ the map that takes $(L, \nabla,
\lambda)$ to the underlying line bundle $L$. The following lemma establishes the
existence of a tautological line bundle with generalized connection on $A \times
E(A)$.

\begin{lemma} \label{lem:connection}
Let $\Pg = (\id \times \pi)^{\ast} P$ denote the pullback of the Poincar\'e bundle to
$A \times E(A)$. Then there is a canonical generalized relative connection
\[
	\nablag \colon \Pg \to \Omega_{A \times E(A) / E(A)}^1 \tensor \Pg
\]
that satisfies the Leibniz rule $\nablag(f \cdot s) = f \cdot \nablag s
+ d_{A \times E(A) / E(A)} f \tensor \lambda s$.
\end{lemma}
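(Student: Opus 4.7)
I would construct $\nablag$ directly from the tautological section of $\pi^{\ast}\shE(A)$ on $E(A)$, using the description $\shE(A) = \derR\Phi_P(\OA/\mm_A^2)$ as the starting point.

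Step 1: Unravel the Fourier--Mukai description. Let $Z = \Spec(\OA/\mm_A^2)$ denote the first infinitesimal neighbourhood of $0_A \in A$. Since $\OA/\mm_A^2$ is supported at $0_A$ and $P|_{\{0_A\} \times \Ah} \cong \OAh$ by normalisation, the transform simplifies to $\shE(A) = (p_2)_{\ast}\bigl(P|_{Z \times \Ah}\bigr)$, and the extension \eqref{eq:ext-sheaf} arises from the short exact sequence $0 \to \mm_A/\mm_A^2 \to \OA/\mm_A^2 \to \OA/\mm_A \to 0$. Flat base change along $\pi \colon E(A) \to \Ah$ then produces a canonical isomorphism
\[
	\pi^{\ast}\shE(A) \;\cong\; (\tilde p_2)_{\ast}\bigl(\Pg|_{Z \times E(A)}\bigr),
\]
with $\tilde p_2 \colon A \times E(A) \to E(A)$. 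Under this identification the tautological section $\tau \in \Gamma(E(A), \pi^{\ast}\shE(A))$ corresponds to a distinguished section $\tilde\tau$ of $\Pg$ on the first-order thickening $Z \times E(A)$; its image in the quotient $\shO_{E(A)}$ is, by construction of $E(A)$, exactly the coordinate function $\lambda \colon E(A) \to \CC$.

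Step 2: Extract $\nablag$ from $\tilde\tau$. Under the canonical isomorphism $\mm_A/\mm_A^2 \cong H^0(A, \Omega_A^1)$ via translation-invariant forms, the section $\tilde\tau$ decomposes as $\lambda \cdot 1 + \phi$ with $\phi \in H^0(A, \Omega_A^1) \tensor \shO_{E(A)}$; this is precisely the infinitesimal datum of a relative $\lambda$-connection at $\{0_A\} \times E(A)$. Translation invariance of the Poincar\'e bundle along $A$ propagates this datum uniquely to a $\CC$-linear operator
\[
	\nablag \colon \Pg \longrightarrow \Omega_{A \times E(A)/E(A)}^1 \tensor \Pg,
\]
and the Leibniz rule $\nablag(f s) = f \cdot \nablag s + df \tensor \lambda s$ is built into the extension \eqref{eq:ext-sheaf}: the quotient $\OAh$ records the factor $\lambda$, while the kernel $H^0(A, \Omega_A^1) \tensor \OAh$ supplies the $\Omega^1$-contribution.

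The principal obstacle is the middle step: making rigorous the passage from the first-order section $\tilde\tau$ on $Z \times E(A)$ to a genuine relative connection on all of $A \times E(A)$. This requires working with the Appell--Humbert presentation of $P$, or equivalently invoking the theorem of the cube, to see that the translation-invariance structure of the Poincar\'e bundle patches the infinitesimal data uniformly in $E(A)$. Once this is in place, integrability $\nablag \circ \nablag = 0$ comes for free, since the construction involves only translation-invariant and hence closed $1$-forms on $A$.
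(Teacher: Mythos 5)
Your overall strategy coincides with the paper's: the connection is manufactured from the tautological section of $\pi^{\ast}\shE(A)$, and you correctly identify that its image in the quotient $\shO_{E(A)}$ of \eqref{eq:ext-sheaf} is $\lambda$, which is what makes the Leibniz rule come out with parameter $\lambda$. But the step you yourself flag as ``the principal obstacle'' --- passing from a first-order datum along $\{0_A\}\times E(A)$ to an actual operator $\nablag$ on all of $A\times E(A)$ --- is exactly the mathematical content of the lemma, and your proposal does not carry it out; appealing to ``translation invariance of the Poincar\'e bundle'' is a gesture, not an argument, because $P$ is not literally translation-invariant along $A$ (its restrictions to the slices $A\times\{\alpha\}$ are nontrivial line bundles), so one must say precisely which structure propagates the datum.

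The paper closes this gap by working with the relative sheaf of first principal parts globally rather than at the origin. Let $\shI$ be the ideal of the diagonal in $A\times A$ and $Z\subseteq A\times A\times E(A)$ the thickening cut out by $\shI^2$; a relative $\lambda$-connection on $\Pg$ is by definition a splitting-up-to-$\lambda$ of the jet sequence
\[
0 \to \Pg\tensor H^0(A,\Omega_A^1) \to (p_{13})_{\ast}\bigl(\shO_Z\tensor p_{23}^{\ast}\Pg\bigr) \to \Pg \to 0 .
\]
The key computation is that pulling back along $f\times\id_{E(A)}$ with $f(a,b)=(a,a+b)$, which carries the first infinitesimal neighborhood of $A\times\{0_A\}\times E(A)$ isomorphically onto $Z$, and using $(m\times\id_{\Ah})^{\ast}P = p_{13}^{\ast}P\tensor p_{23}^{\ast}P$, identifies this jet sequence with $\Pg$ tensored with the pullback of \eqref{eq:ext-sheaf} by $\pi\circ p_2$, i.e.
\[
(p_{13})_{\ast}\bigl(\shO_Z\tensor p_{23}^{\ast}\Pg\bigr) \;\cong\; \Pg\tensor p_2^{\ast}\pi^{\ast}\shE(A).
\]
This isomorphism is the precise form of the ``propagation'' you invoke: it shows that the principal parts of $\Pg$ at \emph{every} point of $A$, not just at $0_A$, are controlled by the single bundle $\shE(A)=\derR\Phi_P(\OA/\mm_A^2)$. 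Once it is in place, the tautological section of $\pi^{\ast}\shE(A)$ gives the required splitting and hence $\nablag$, with the Leibniz rule exactly as you describe. So your outline is recoverable, but you should replace the appeal to Appell--Humbert/translation invariance by this jet-bundle identification (or prove an equivalent statement); as written, the proof is incomplete at its central point. One further small remark: the lemma does not assert integrability, so your closing claim that $\nablag\circ\nablag=0$ ``comes for free'' is not needed here (and would require its own verification).
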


\begin{proof}
Let $\shI$ denote the ideal sheaf of the diagonal in $A \times A$. Let $Z$ be the
non-reduced subscheme of $A \times A \times E(A)$ defined by the ideal sheaf
$\shI^2 \cdot \shO_{A \times A \times E(A)}$. We have a natural exact sequence
\begin{equation} \label{eq:seq-connection}
	0 \to \Pg \tensor H^0(A, \Omega_A^1) \to (p_{13})_{\ast} \bigl( 
		\shO_Z \tensor p_{23}^{\ast} \Pg \bigr) \to \Pg \to 0,
\end{equation}
and a generalized relative connection is the same thing as a morphism of sheaves
\[
	\Pg \to (p_{13})_{\ast} \bigl( \shO_Z \tensor p_{23}^{\ast} \Pg \bigr)
\]
whose composition with the morphism to $\Pg$ acts as multiplication by $\lambda$. In
fact, there is a canonical choice, which we shall now describe.
Consider the morphism
\[
	f \colon A \times A \to A \times A, \quad f(a, b) = (a, a+b).
\]
Since $f \times \id_{E(A)}$ induces an isomorphism between the first infinitesimal
neighborhood of $A \times \{0_A\} \times E(A)$ and the subscheme $Z$, we have
\begin{align*}
	(f \times \id_{E(A)})^{\ast} \bigl( \shO_Z \tensor p_{23}^{\ast} \Pg \bigr)
		&= p_2^{\ast}(\OA / \mm_A^2) \tensor (m \times \id_{E(A)})^{\ast} \Pg \\
		&= p_2^{\ast}(\OA / \mm_A^2) \tensor 
			p_{13}^{\ast} \Pg \tensor p_{23}^{\ast} \Pg,
\end{align*}
due to the well-known fact that the Poincar\'e bundle satisfies
\[
	(m \times \id_{\Ah})^{\ast} P = p_{13}^{\ast} P \tensor p_{23}^{\ast} P
\]
on $A \times A \times \Ah$. Since $p_{13} \circ (f \times \id_{E(A)}) = p_{13}$, we
conclude that we have
\[
	(p_{13})_{\ast} \bigl( \shO_Z \tensor p_{23}^{\ast} \Pg \bigr)
	= \Pg \tensor p_2^{\ast} \pi^{\ast} \derR \Phi_P \bigl( \OA / \mm_A^2 \bigr)
	= \Pg \tensor p_2^{\ast} \pi^{\ast} \shE(A)
\]
on $A \times E(A)$; more precisely, \eqref{eq:seq-connection} is isomorphic to the
tensor product of $\Pg$ and the pullback of \eqref{eq:ext-sheaf} by $\pi \circ p_2$.

Now the pullback of the exact sequence \eqref{eq:ext-sheaf} to $E(A)$ obviously has a
splitting of the type we are looking for: indeed, the tautological section of
$\pi^{\ast} \shE(A)$ gives a morphism $\shO_{E(A)} \to \pi^{\ast} \shE(A)$ whose
composition with the projection to $\shO_{E(A)}$ is multiplication by $\lambda$. Thus
we obtain a canonical morphism $\Pg \to \Pg \tensor p_2^{\ast} \pi^{\ast} \shE(A)$
and hence, by the above, the desired generalized relative connection.
\end{proof}

\begin{note}
At any closed point $e \in E(A)$, we obtain a $\lambda(e)$-connection
on the line bundle corresponding to $\pi(e) \in \Pic^0(A)$.
Using the properties of the Picard scheme, it is not difficult to show that
$E(A)$ is a fine moduli space, in the obvious sense; but since we do not need this
fact below, we shall omit the proof.
\end{note}

\begin{corollary}
The pullback of the Poincar\'e bundle to $A \times E(A)$ has the structure
of a relative $\Rmod_A$-module, where $z$ act as multiplication by $\lambda$, and
$\delta_i$ as $\nabla_{\partial_i}$.
\end{corollary}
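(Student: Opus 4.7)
\emph{Plan.} I would define the structure by letting $z$ act as multiplication by $p_2^{\ast} \lambda$, where $\lambda \colon E(A) \to \CC$ is the structure morphism, and $\delta_i$ act as $\nablag_{\partial_i}$, the contraction of $\nablag$ with the translation-invariant vector field $\partial_i \in H^0(A, \shT_A)$. Since $\Rmod_A$ is generated over $\OA[z]$ by $\delta_1, \dotsc, \delta_g$ subject to $[\delta_i, \delta_j] = 0$ and $[\delta_i, f] = z \cdot \partial_i f$ for $f \in \OA$, it is enough to verify these two relations (together with $\OS$-linearity of every operator, so that the module structure really is relative).

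The relation $[\delta_i, f] = z \cdot \partial_i f$ is a direct consequence of the generalized Leibniz rule established in Lemma \ref{lem:connection}: pairing the identity $\nablag(f s) = f \nablag s + df \tensor \lambda s$ with $\partial_i$ yields $\nablag_{\partial_i}(f s) = f \nablag_{\partial_i} s + (\partial_i f) \lambda s$. The $\OS$-linearity of $\delta_i$, and the resulting commutation $[z, \delta_i] = 0$, is built into the fact that $\nablag$ is a connection \emph{relative} to $E(A)$, so that $d_{A \times E(A)/E(A)} \lambda = 0$.

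The only substantive point is the integrability $[\delta_i, \delta_j] = 0$, which amounts to vanishing of the curvature $\nablag \circ \nablag \colon \Pg \to \Omega_{A \times E(A)/E(A)}^{2} \tensor \Pg$. Since $\Pg$ is a line bundle and $\Omega_A^{2} = \OA \tensor_{\CC} \Lambda^{2} H^{0}(A, \Omega_A^{1})$ is globally trivial, this curvature is $\OA \tensor \OS$-linear and corresponds to a regular map $E(A) \to \Lambda^{2} H^{0}(A, \Omega_A^{1})$; it therefore suffices to check that it vanishes on the dense open subset $U = \lambda^{-1}(\Cst) \subseteq E(A)$. On $U$, the rescaling isomorphism $U \simeq \Ash \times \Cst$, sending $(L, \nabla, \lambda)$ to $\bigl( (L, \tfrac{1}{\lambda} \nabla), \lambda \bigr)$, identifies the pair $\bigl( \Pg, \tfrac{1}{\lambda} \nablag \bigr) \big\vert_{A \times U}$ with the pullback of the tautological pair $(\Psh, \nablash)$ from $A \times \Ash$, because both are obtained from the canonical section of the pullback of \eqref{eq:ext-sheaf}. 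Integrability of $\nablash$ then implies that $\tfrac{1}{\lambda} \nablag \big\vert_U$ is integrable, and a short computation using $d_{A \times E(A)/E(A)} \lambda = 0$ shows that multiplying an integrable $1$-connection by a relatively constant function $\lambda$ produces an integrable $\lambda$-connection, so $\nablag$ itself has vanishing curvature on $U$. The main obstacle will be making the identification with $(\Psh, \nablash)$ on $U$ precise, i.e., verifying that the two canonical splittings are compatible under the rescaling; everything else is formal.
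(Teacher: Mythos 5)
Your proposal is correct, and it actually supplies more than the paper does: the paper states this corollary with no proof at all, treating it as immediate from Lemma~\ref{lem:connection}, which only establishes the Leibniz rule and says nothing explicit about integrability. Your reduction of the relations $[\delta_i,f]=z\cdot\partial_i f$ and $[z,\delta_i]=0$ to the relative Leibniz rule is exactly right, and your treatment of $[\delta_i,\delta_j]=0$ --- observing that the curvature is an $\shO$-linear section of $p_1^{\ast}\Omega_A^2\tensor\End{\Pg}\simeq \shO_{A\times E(A)}\tensor\bigwedge^2 H^0(A,\Omega_A^1)$, hence a regular map from the irreducible variety $E(A)$, and then killing it on the dense open set $\lambda^{-1}(\Cst)$ via the rescaling identification with $\Ash\times\Cst$ --- is sound. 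The compatibility of splittings that you flag as the remaining obstacle is genuinely unproblematic: the generalized connection at a point $e\in E(A)$ is cut out by the tautological section $e$ of $\pi^{\ast}\shE(A)$, and scaling $e\mapsto e/\lambda(e)$ within the fiber of $E(A)\to\Ah$ preserves the underlying line bundle and rescales the splitting linearly, which is precisely the identification $\tfrac{1}{\lambda}\nablag\restr{A\times\{e\}}=\nablash\restr{A\times\{e/\lambda(e)\}}$. A marginally shorter route to integrability, in the spirit of the paper's earlier remark that a $\lambda$-connection on a line bundle $L\in\Pic^0(A)$ is automatically integrable, is to note that the relative curvature vanishes iff its restriction to each slice $A\times\{e\}$ does, and on each slice it equals $\lambda(e)\,d\theta$ with $\theta$ a translation-invariant (hence closed) one-form; but your density argument, which instead quotes the known integrability of $\nablash$, is equally valid.
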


\subsection{The case of $\Rmod$-modules}

We shall now describe a version of the Fourier-Mukai transform that works for
algebraic $\Rmod_A$-modules. Since we only need a very special case in this paper, we
leave a more careful discussion to a future publication. 

Let $\Mmod$ be an $\Rmod_A$-module, and denote by $\Mmodt$ the associated
quasi-coherent sheaf on $A \times \CC$; as explained above, $\Mmodt$ is a left module
over the subalgebra of $\Dmod_{A \times \CC/\CC}$ generated by $z \shT_{A \times
\CC/\CC}$. Thus the tensor product
\[
	(\id \times \lambda)^{\ast} \Mmodt \tensor_{\shO_{A \times E(A)}} \Pg
\]
naturally has the structure of a relative $\Rmod_A$-module on $A \times E(A)$, with
$z(m \tensor s) = (\lambda m) \tensor s = m \tensor \lambda s$ and $\delta_i(m \tensor s) =
(\delta_i m) \tensor s + m \tensor \nablag_{\partial_i} s$. We may therefore consider
the relative de Rham complex
\[
	\DR_{A \times E(A) / E(A)} \Bigl( (\id \times \lambda)^{\ast} \Mmodt
		\tensor_{\shO_{A \times E(A)}} \bigl( \Pg, \nablag \bigr) \Bigr),
\]
which is defined analogously to \eqref{eq:DR-R}.

\begin{definition} \label{def:FM-R}
The \define{Fourier-Mukai transform} of an $\Rmod_A$-module $\Mmod$ is
\[
	\FMt_A(\Mmod) = \derR (p_2)_{\ast} 
		\DR_{A \times E(A) / E(A)} \Bigl( (\id \times \lambda)^{\ast} \Mmodt 
			\tensor_{\shO_{A \times E(A)}} \Pg \Bigr);
\]
it is an object of $\Db \bigl( \shO_{E(A)} \bigr)$, the bounded derived category of
quasi-coherent sheaves on $E(A)$.
\end{definition}

\begin{note}
Using the general formalism in \cite{PR}, one can show that the Fourier-Mukai transform
induces an equivalence of categories
\[
	\FMt_A \colon \Db(\Rmod_A) \to \Db \bigl( \shO_{E(A)} \bigr),
\]
which restricts to an equivalence between the coherent subcategories. Since this fact
will not be used below, we again omit the proof.
\end{note}

\subsection{Compatibility}
\label{subsec:compatible}

Just as $\Rmod_A$-modules interpolate between $\Dmod_A$-modules and quasi-coherent
sheaves on the cotangent bundle $T^{\ast} A$, Definition~\ref{def:FM-R} interpolates
between the Fourier-Mukai transform for $\Dmod_A$-modules and the usual Fourier-Mukai
transform for quasi-coherent sheaves. The purpose of this section is to make that
relationship precise. 

Throughout the discussion, let $\Mmod$ be a coherent $\Dmod_A$-module and $F =
F_{\bullet} \Mmod$ a good filtration of $\Mmod$ by $\OA$-coherent subsheaves. The
graded $\Sym
\shT_A$-module $\gr_{\bullet}^F \Mmod$ is then coherent over $\Sym \shT_A$, and
therefore defines a coherent sheaf on the cotangent bundle $T^{\ast} A$ that we
denote by $\shC(\Mmod, F)$. Now consider the Rees module
\[
	R_F \Mmod = \bigoplus_{k \in \ZZ} F_k \Mmod \cdot z^k 
		\subseteq \Mmod \tensor_{\OA} \OA \lbrack z, z^{-1} \rbrack,
\]
which is a graded $\Rmod_A$-module, coherent over $\Rmod_A$. The associated quasi-coherent
sheaf on $A \times \CC$, which we again denote by the symbol $\ReesM$, is then equivariant
for the natural $\CC^{\ast}$-action on the product.  Moreover, it is easy to see that
the restriction of $\ReesM$ to $A \times \{1\}$ is a $\Dmod_A$-module isomorphic to
$\Mmod$, while the restriction to $A \times \{0\}$ is a $\Sym \shT_A$-module
isomorphic to $\gr_{\bullet}^F \Mmod$.

\begin{proposition} \label{prop:compatible}
Let $\Mmod$ be a coherent $\Dmod_A$-module with good filtration $F_{\bullet} \Mmod$.
Then the Fourier-Mukai transform $\FMt_A(R_F \Mmod)$ has the following properties:
\begin{renumerate}
\item It is equivariant for the natural $\CC^{\ast}$-action on the
vector bundle $E(A)$.
\label{en:FM-R-i}
\item Its restriction to $\Ash = \lambda^{-1}(1)$ is canonically
isomorphic to $\FM_A(\Mmod)$.
\label{en:FM-R-ii}
\item Its restriction to $A \times H^0(A, \Omega_A^1) =
\lambda^{-1}(0)$ is canonically isomorphic to
\[
	\derR (p_{23})_{\ast} \Bigl( p_{12}^{\ast} P \tensor p_1^{\ast} \Omega_A^g
			\tensor p_{13}^{\ast} \shC(\Mmod, F) \Bigr),
\]
where the notation is as in the diagram in \eqref{eq:diag-cotangent} below.
\label{en:FM-R-iii}
\end{renumerate}
\end{proposition}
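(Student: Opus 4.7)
The plan is to prove each part in turn. Parts (i) and (ii) are essentially formal---applications of $\CC^*$-equivariance and base change, respectively---while part (iii) requires a Koszul-type resolution argument.

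\emph{Part (i).} The Rees module $R_F \Mmod = \bigoplus_k F_k \Mmod \cdot z^k$ is tautologically $\ZZ$-graded, so the associated sheaf $\ReesM$ on $A \times \CC$ carries a natural $\CC^*$-equivariant structure with respect to scaling on the $\CC$-factor. The bundle $E(A) \to \Ah$ inherits a $\CC^*$-action by fiber scaling, and by inspection of Lemma~\ref{lem:connection} the universal pair $(\Pg, \nablag)$ is $\CC^*$-equivariant: scaling a $\lambda$-connection by $t$ yields a $t\lambda$-connection. Every operation entering Definition~\ref{def:FM-R} respects these $\CC^*$-actions, so $\FMt_A(R_F\Mmod)$ is $\CC^*$-equivariant.

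\emph{Part (ii).} I would apply flat base change along the closed immersion $\iota_1 \colon \Ash = \lambda^{-1}(1) \hookrightarrow E(A)$; this is permitted because $p_2 \colon A \times E(A) \to E(A)$ is proper. On input, $(R_F\Mmod)/(z-1)R_F\Mmod \simeq \Mmod$ as a $\Dmod_A$-module (since $\Rmod_A/(z-1)\Rmod_A \simeq \Dmod_A$), and $(\Pg, \nablag)$ restricts to $(\Psh, \nablash)$ because the Leibniz rule with parameter $\lambda = 1$ is the ordinary one. The relative de Rham complex therefore restricts to the complex displayed in \eqref{eqn:laumon}, and Definition~\ref{def:FM-R} reduces to the classical Fourier-Mukai $\FM_A(\Mmod)$.

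\emph{Part (iii).} Write $V = H^0(A, \Omega_A^1)$, so that $T^*A = A \times V$ and $\lambda^{-1}(0) = \Ah \times V$. Apply base change along $\iota_0 \colon \Ah \times V \hookrightarrow E(A)$. The restriction of $(\id \times \lambda)^* \ReesM$ is $p_1^*\gr^F\Mmod$ on $A \times \Ah \times V$, together with its residual $\Sym\shT_A$-action. By inspection of Lemma~\ref{lem:connection}, the generalized connection $\nablag$ specializes at $\lambda = 0$ to the Higgs field on $p_{12}^* P$ given by multiplication by the tautological section of $V \subseteq p_1^* \Omega_A^1$. The relative de Rham complex thus becomes a Koszul-type complex in degrees $-g, \dotsc, 0$, whose differential combines the $\Sym\shT_A$-action on $p_1^*\gr^F\Mmod$ with the tautological Higgs field on $p_{12}^* P$. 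The main obstacle is to verify that $\derR (p_{23})_\ast$ of this complex coincides with $\derR (p_{23})_\ast(p_{12}^* P \otimes p_1^* \Omega_A^g \otimes p_{13}^* \shC(\Mmod, F))$. I plan to check this by showing that, at each point $(L, v) \in \Ah \times V$, the Koszul differential on $\gr^F\Mmod \otimes L$ acts through the operators $\partial_i - v_i$ (after the appropriate sign convention), which form a regular sequence in $\Sym V^*$ cutting out the point $v \in V$. Consequently the complex is a resolution of $\Omega_A^g \otimes \shC(\Mmod, F)|_{A \times \{v\}} \otimes L$ in top degree, the $\Omega_A^g$ factor arising from the placement of the de Rham complex in degrees $-g, \dotsc, 0$ together with the duality between $\wedge^g V$ and $\Omega_A^g$. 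A careful sign check is required to confirm that the shift is $\partial_i - v_i$ rather than $\partial_i + v_i$, but this is routine given the conventions in Definition~\ref{def:FM-R} and Lemma~\ref{lem:connection}.
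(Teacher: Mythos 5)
Parts (i) and (ii) of your proposal are correct and essentially identical to the paper's argument (for (ii) the relevant tool is derived base change for the proper projection $p_2 \colon A \times E(A) \to E(A)$, which applies because $A \times E(A)$ is flat over $E(A)$; calling it ``flat base change along the closed immersion'' is a terminological slip but harmless). The setup of part (iii) is also right: restrict to $\lambda^{-1}(0)$, observe that $\nablag$ specializes to the tautological Higgs field $\theta_A$, and recognize the relative de Rham complex as a Koszul-type complex. The gap is in your verification that this complex resolves $p_{12}^{\ast}P \tensor p_1^{\ast}\Omega_A^g \tensor p_{13}^{\ast}\shC(\Mmod,F)$. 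You propose to check this fibre by fibre over $\Ah \times V$ (where $V = H^0(A,\Omega_A^1)$): at a point $(L,v)$ the differential acts through $\partial_i - v_i$, and you conclude the complex is a resolution because these ``form a regular sequence in $\Sym V^{\ast}$.'' That inference is invalid: regularity of a sequence in the ring does not give regularity on the module $\gr_{\bullet}^F \Mmod$, and the Koszul complex of $(\partial_i - v_i)$ acting on $\gr_{\bullet}^F\Mmod$ computes $\Tor_{\bullet}^{\Sym \shT_A}\bigl(\gr_{\bullet}^F\Mmod, \, \shO_{A\times\{v\}}\bigr)$, i.e.\ the \emph{derived} fibre of $\shC(\Mmod,F)$, which is not concentrated in top degree whenever $\shC(\Mmod,F)$ fails to be flat over $V$. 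Concretely, take $\Mmod = \OA$ with the trivial good filtration, so that $\shC(\Mmod,F) = \shO_{A \times \{0\}}$: at $v = 0$ every $\partial_i$ acts as zero on $\gr_{\bullet}^F\OA = \OA$, the Koszul differentials vanish, and the complex has nonzero cohomology in every degree from $-g$ to $0$. (Even where the fibrewise claim does hold, matching fibres of two objects does not by itself produce the isomorphism of derived pushforwards you need; one must exhibit a quasi-isomorphism on $A \times \Ah \times V$ before applying $\derR(p_{23})_{\ast}$.)

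The correct statement, which is what the paper proves, is global in $v$: on $A \times V$ the complex with terms $p_1^{\ast}(\Omega_A^k \tensor \gr_{\bullet}^F\Mmod)$ and differential $\omega \tensor m \mapsto \pm\bigl(\omega\wedge\theta_A\tensor m + \sum_i \omega\wedge\omega_i\tensor\partial_i m\bigr)$ is a resolution of $p_1^{\ast}\Omega_A^g \tensor \shC(\Mmod,F)$ for \emph{any} coherent $\Sym\shT_A$-module. The reason it works for any module is that the tautological form keeps the coordinates $y_1,\dotsc,y_g$ of $V$ as genuinely free variables: after the change of variables $u_i = y_i - \partial_i$ in $\gr_{\bullet}^F\Mmod \tensor_{\CC} \CC[y_1,\dotsc,y_g]$, the relevant elements become a regular sequence with quotient $\gr_{\bullet}^F\Mmod$ regarded as a sheaf on $A \times V$ via its $\Sym\shT_A$-module structure, which is exactly $\shC(\Mmod,F)$. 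Replacing your fibrewise check with this relative Koszul argument closes the gap; the remainder of your outline is sound.
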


We begin the proof with two simple lemmas that describe how $E(A)$ and $(\Pg,
\nablag)$ behave under restriction to the fibers of $\lambda \colon E(A) \to \CC$.

\begin{lemma} \label{lem:compatible-1}
We have $\lambda^{-1}(1) = \Ash$, and the restriction of $(\Pg, \nablag)$ to
$A \times \Ash$ is equal to $(\Psh, \nablash)$.
\end{lemma}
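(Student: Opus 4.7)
The first equality is essentially a matter of unwinding definitions. By construction, $E(A)$ is the total space of a vector bundle on $\Ah$ sitting in the extension
\[
	0 \to \Ah \times H^0(A, \Omega_A^1) \to E(A) \to \Ah \times \CC \to 0,
\]
and $\Ash$ was introduced earlier as the preimage of $\Ah \times \{1\}$ in $E(A)$; the morphism $\lambda \colon E(A) \to \CC$ is by definition the composition with the projection to the $\CC$-factor, so $\lambda^{-1}(1) = \Ash$ tautologically, and $\pi \colon E(A) \to \Ah$ restricts to the map $(L, \nabla) \mapsto L$ on $\Ash$.

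For the line bundles, I would observe that $\Pg = (\id_A \times \pi)^{\ast} P$ on $A \times E(A)$, while $\Psh = (\id_A \times \pi)^{\ast} P$ on $A \times \Ash$; since both are pullbacks of the Poincar\'e bundle along compatible maps to $A \times \Ah$, the restriction of $\Pg$ to $A \times \Ash$ is canonically $\Psh$.

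The interesting part is comparing the connections. Going back to the proof of Lemma~\ref{lem:connection}, the generalized connection $\nablag$ is built from the tautological section of $\pi^{\ast} \shE(A)$, viewed as a morphism $\shO_{E(A)} \to \pi^{\ast} \shE(A)$ whose projection to $\shO_{E(A)}$ equals $\lambda$. Restricting to $\Ash = \lambda^{-1}(1)$, this section lands in the affine subsheaf mapping to $1 \in \shO_{\Ash}$, and thus provides an honest splitting of the pullback
\[
	0 \to H^0(A, \Omega_A^1) \tensor \shO_{\Ash} \to \pi^{\ast} \shE(A)
		\to \shO_{\Ash} \to 0.
\]
I would then trace this through the identification of \eqref{eq:seq-connection} with $\Pg \tensor p_2^{\ast} \pi^{\ast} \shE(A)$; the resulting splitting of \eqref{eq:seq-connection} restricted to $A \times \Ash$ produces an ordinary (rather than $\lambda$-twisted) relative integrable connection on $\Psh$. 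Finally, by the moduli-theoretic universal property used to define $\Ash$ and $(\Psh, \nablash)$ in Section~B.1, this connection must coincide with $\nablash$.

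The main obstacle is the bookkeeping in the last step: carefully verifying that the canonical splitting built via the tautological section of $\pi^{\ast} \shE(A)$ at $\lambda = 1$ matches, under the extension class identification for the Picard scheme of $A$, the universal relative connection on $\Psh$. Everything else is formal from the constructions.
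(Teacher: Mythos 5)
Your proposal is correct and follows essentially the same route as the paper, which simply remarks that the lemma "follows from the construction of $\Ash$" (citing Mazur--Messing); you have merely unwound that construction explicitly, including the key point that the tautological section of $\pi^{\ast}\shE(A)$ restricted to $\lambda^{-1}(1)$ splits the extension into an honest integrable connection identified with $\nablash$ by the universal property. No gaps.
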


\begin{proof}
This follows from the construction of $\Ash$ in \cite{MM}*{Chapter~I}.
\end{proof}

Recall that the cotangent bundle of $A$ satisfies $T^{\ast} A = A \times H^0(A,
\Omega_A^1)$, and consider the following diagram:
\begin{diagram}{2.5em}{2.5em} \label{eq:diag-cotangent}
\matrix[math] (m) {
	A \times H^0(A, \Omega_A^1) & A \times \Ah \times H^0(A, \Omega_A^1)
		& \Ah \times H^0(A, \Omega_A^1) \\
	& A \times \Ah \\
}; 
\path[to] (m-1-2) edge node[above] {$p_{13}$} (m-1-1)
	(m-1-2) edge node[auto] {$p_{12}$} (m-2-2)
	(m-1-2) edge node[auto] {$p_{23}$} (m-1-3);
\end{diagram}

\begin{lemma} \label{lem:compatible-0}
We have $\lambda^{-1}(0) = \Ah \times H^0(A, \Omega_A^1)$, and the restriction of
$(\Pg, \nablag)$ to $A \times \Ah \times H^0(A, \Omega_A^1)$ is equal to the
Higgs bundle
\[
	\bigl( p_{12}^{\ast} P, \, p_{23}^{\ast} \theta_A \bigr),
\]
where $\theta_A$ denotes the tautological holomorphic one-form on $T^{\ast} A$.
\end{lemma}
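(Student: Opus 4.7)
The first assertion is immediate from the construction of $E(A)$: by definition $\lambda \colon E(A) \to \CC$ factors as the composition of the bundle projection $E(A) \to \Ah \times \CC$ with the second projection to $\CC$, so $\lambda^{-1}(0)$ is the kernel of $E(A) \to \Ah \times \CC$, which by \eqref{eq:ext-sheaf} is $\Ah \times H^0(A, \OmA{1})$. Since $\Pg = (\id_A \times \pi)^{\ast} P$ and $\pi$ restricted to $\lambda^{-1}(0)$ is the first projection $\Ah \times H^0(A, \OmA{1}) \to \Ah$, the restriction of $\Pg$ to $A \times \Ah \times H^0(A, \OmA{1})$ is $p_{12}^{\ast} P$.

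The substance of the lemma is therefore the identification of the restricted $\nablag$ with the tautological Higgs field on $p_{12}^{\ast} P$. The plan is to revisit the construction of $\nablag$ from the proof of Lemma~\ref{lem:connection}: the generalized connection arose by tensoring $\Pg$ with the canonical splitting $\shO_{E(A)} \to \pi^{\ast} \shE(A)$ of the pullback of \eqref{eq:ext-sheaf}, characterized by the property that its composition with the projection $\pi^{\ast} \shE(A) \to \shO_{E(A)}$ is multiplication by $\lambda$. Over $\lambda^{-1}(0)$ this composition vanishes, so the splitting factors through the sub-bundle $H^0(A, \OmA{1}) \tensor \shO_{\lambda^{-1}(0)}$; direct inspection shows it is the identity section of this trivial bundle, i.e.\ the universal linear coordinate on the $H^0(A, \OmA{1})$-factor of $\lambda^{-1}(0) = \Ah \times H^0(A, \OmA{1})$.

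Transporting this through the isomorphism $(p_{13})_{\ast}(\shO_Z \tensor p_{23}^{\ast} \Pg) \simeq \Pg \tensor p_2^{\ast} \pi^{\ast} \shE(A)$ used in the proof of Lemma~\ref{lem:connection}, the restricted $\nablag$ becomes $\shO$-linear, lands in $\Pg \tensor H^0(A, \OmA{1})$, and is multiplication by this tautological section. Under the canonical identification of $H^0(A, \OmA{1}) \tensor \shO_{T^{\ast} A}$ with $p_1^{\ast} \OmA{1}$ on $T^{\ast} A = A \times H^0(A, \OmA{1})$, this tautological section is precisely the universal one-form $\theta_A$; pulled back to the triple product via the appropriate projection, it yields the Higgs field in the statement.

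The main obstacle is purely bookkeeping: chaining the canonical identifications --- the definition of $\shE(A)$ via the Fourier-Mukai transform of $\OA / \mm_A^2$, the construction of $\nablag$ from the tautological splitting of $\pi^{\ast} \shE(A)$, and the reinterpretation of $H^0(A, \OmA{1})$ as the fiber coordinates of $T^{\ast} A$ --- and verifying that the normalizations are consistent with the standard convention defining $\theta_A$.
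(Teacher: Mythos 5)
Your argument is correct and is exactly the route the paper intends: its proof of this lemma is the single line ``this follows easily from the proof of Lemma~\ref{lem:connection},'' and what you have written is precisely the unwinding of that proof (the tautological splitting of $\pi^{\ast}\shE(A)$ lands in the sub-bundle $H^0(A,\OmA{1})\tensor\shO$ over $\lambda^{-1}(0)$, where it is the tautological linear coordinate, hence the universal one-form, and the $\lambda=0$ Leibniz rule makes $\nablag$ $\shO$-linear). No discrepancy to report.
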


\begin{proof}
This follows easily from the proof of Lemma~\ref{lem:connection}.
\end{proof}

We can now prove the asserted compatibilities between Definition~\ref{def:FM-R} and
the Fourier-Mukai transforms for $\Dmod_A$-modules and coherent sheaves.

\begin{proof}[Proof of Proposition~\ref{prop:compatible}]
\ref{en:FM-R-i} is true because $R_F \Mmod$ is a graded $\Rmod_A$-module, and because
$\Pg$, $\nablag$, and the relative de Rham complex are obviously
$\CC^{\ast}$-equivariant. \ref{en:FM-R-ii} follows directly from the definition of the
Fourier-Mukai transform, using the base change formula for the morphism $\lambda
\colon E(A) \to \CC$ and Lemma~\ref{lem:compatible-1}.

The proof of \ref{en:FM-R-iii} is a little less obvious, and so we give some details.
By base change, it suffices to show that the restriction of the relative de Rham complex
\[
	\DR_{A \times E(A) / E(A)} \Bigl( (\id \times \lambda)^{\ast} \Mmodt
		\tensor_{\shO_{A \times E(A)}} \bigl( \Pg, \nablag \bigr) \Bigr)
\]
to $A \times \Ah \times H^0(A, \Omega_A^1)$ is a resolution of the
coherent sheaf $p_{12}^{\ast} P \tensor p_1^{\ast} \Omega_A^g \tensor p_{13}^{\ast}
\shC(\Mmod, F)$. After a short computation, one finds that this restriction is
isomorphic to the tensor product of $p_{12}^{\ast} P$ and the pullback, via $p_{13}
\colon A \times \Ah \times H^0(A, \Omega_A^1) \to A \times H^0(A, \Omega_A^1)$, of
the complex
\[
	\biggl\lbrack 
		p_1^{\ast} \Bigl( \gr_{\bullet}^F \Mmod \Bigr) \to 
			p_1^{\ast} \Bigl( \Omega_A^1 \tensor_{\OA} \gr_{\bullet}^F \Mmod \Bigr) 
		\to \dotsb \to 
			p_1^{\ast} \Bigl( \Omega_A^g \tensor_{\OA} \gr_{\bullet}^F \Mmod \Bigr)
	\biggr\rbrack,
\]
placed in degrees $-g, \dotsc, 0$, with differential
$p_1^{\ast} \bigl( \Omega_A^k \tensor \gr_{\bullet}^F \Mmod \bigr) \to
		p_1^{\ast} \bigl( \Omega_A^{k+1} \tensor \gr_{\bullet}^F \Mmod \bigr)$
given by the formula
\[
	\omega \tensor m \mapsto (-1)^{g+k} \left( \omega \wedge \theta_A \tensor m + 
		\sum_{i=1}^g \omega \wedge \omega_i \tensor \partial_i m \right).
\]
But since $\shC(\Mmod, F)$ is the coherent sheaf on $A \times H^0(A, \Omega_A^1)$
corresponding to the $\Sym \shT_A$-module $\gr_{\bullet}^F \Mmod$, said complex
resolves the coherent sheaf $p_1^{\ast} \Omega_A^g \tensor \shC(\Mmod, F)$, and so we
get the desired result.
\end{proof}

\section{Results about cohomology support loci}

\subsection{Cohomology of constructible complexes}
\label{subsec:constructible}

In this section, we describe an analogue of the Fourier-Mukai transform for
constructible complexes on $A$, and use it to prove that cohomology support loci are
algebraic subvarieties of $\Char(A)$. We refer the reader to \cite{HTT}*{Section~4.5}
and to \cite{Dimca}*{Chapter~4} for details about constructible complexes and
perverse sheaves.

As a complex manifold, the abelian variety $A$ may be presented as a quotient $V /
\Lambda$, where $V$ is a complex vector space of dimension $g$, and $\Lambda
\subseteq V$ is a lattice of rank $2g$. Note that $V$ is isomorphic to the tangent
space of $A$ at the unit element, while $\Lambda$ is isomorphic to the fundamental
group $\pi_1(A, 0_A)$. We shall denote by $R = \CC \lbrack \Lambda \rbrack$ the group
ring of $\Lambda$; thus
\[
	R = \bigoplus_{\lambda \in \Lambda} \CC e_{\lambda},
\]
with $e_{\lambda} \cdot e_{\mu} = e_{\lambda + \mu}$. A choice of basis for $\Lambda$
shows that $R$ is isomorphic to the ring of Laurent polynomials in $2g$ variables.
Any character $\rho \colon \Lambda \to \Cst$ extends uniquely to a homomorphism of
$\CC$-algebras
\[
	R \to \CC, \quad e_{\lambda} \mapsto \rho(\lambda),
\]
whose kernel is a maximal ideal $\mmrho$ of $R$; concretely, $\mmrho$ is 
generated by the elements $e_{\lambda} - \rho(\lambda)$ for $\lambda \in \Lambda$.
It is easy to see that any maximal ideal of $R$ is of this form;
this means that $\Char(A)$ is the set of closed points of the scheme $\Spec R$, and
therefore naturally an affine algebraic variety over $\Spec \CC$.

For any finitely generated $R$-module $M$, multiplication by the ring elements
$e_{\lambda}$ determines a natural action of $\Lambda$ on the $\CC$-vector space $M$.
By the well-known correspondence between representations of the fundamental group and
local systems, it thus gives rise to a local system on $A$.

\begin{definition}
For a finitely generated $R$-module $M$, we denote by the symbol $\shL_M$ the
corresponding local system of $\CC$-vector spaces on $A$.
\end{definition}

Since $R$ is commutative, $\shL_M$ is actually a \emph{local system of $R$-modules}. The
most important case of this construction is $\shL_R$, which is a local system of
$R$-modules of rank one; one can show that it is isomorphic to the direct image with
proper support $\pi_! \, \CC_V$ of the constant local system on the covering space $\pi
\colon V \to A$, but we do not need this fact here. The device above allows us
to construct finitely-generated $R$-modules (and hence coherent sheaves on $\Spec R$)
by twisting a constructible complex on $A$ by a local system of the form $\shL_M$,
and pushing forward along the morphism $p \colon A \to \Spec \CC$ to a point.

\begin{proposition}
Let $E \in \Dbc(\CC_A)$. Then for any finitely generated $R$-module $M$, the direct
image $\derR \pl \bigl( E \tensor_{\CC} \shL_M \bigr)$ belongs to $\Dbcoh(R)$.
\end{proposition}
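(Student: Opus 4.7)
The plan is to reduce the statement to the special case $M = R$ and then to identify $\shL_R$ with the extension by zero of the constant sheaf on the universal cover. For the reduction, observe that $R = \CC[\Lambda]$ is a Laurent polynomial ring in $2g$ variables, hence Noetherian and of finite global dimension. Every finitely generated $R$-module $M$ therefore admits a bounded resolution $F_{\bullet} \to M$ by finitely generated free $R$-modules. The assignment $N \mapsto \shL_N$ is exact, since after choice of basepoint it is just the identity functor on the underlying $\CC$-vector spaces endowed with $\Lambda$-actions. Hence the resolution yields a quasi-isomorphism $E \tensor_{\CC} \shL_{F_{\bullet}} \simeq E \tensor_{\CC} \shL_M$, and since each $\shL_{F_i}$ is a finite direct sum of copies of $\shL_R$, it suffices to treat the case $M = R$.

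For the case $M = R$, let $\pi \colon V \to A$ be the universal covering map. Since $\pi$ is a $\Lambda$-Galois cover, the stalk of $\pi_{!} \CC_V$ at any point is canonically a rank-one free $R$-module (concretely, the free $\CC$-module on the $\Lambda$-torsor $\pi^{-1}(a)$), and this identifies $\shL_R \cong \pi_{!} \CC_V$ as local systems of $R$-modules on $A$. The projection formula then gives
\[
	E \tensor_{\CC} \shL_R \cong \pi_{!} \bigl( \pi^{\ast} E \bigr),
\]
and hence $\derR \pl (E \tensor_{\CC} \shL_R) \cong \derR q_{!} \bigl( \pi^{\ast} E \bigr)$, where $q = p \circ \pi \colon V \to \pt$.

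To see that this hypercohomology lies in $\Dbcoh(R)$, one chooses a finite Whitney stratification of $A$ with respect to which $E$ is constructible, and a compatible finite triangulation $K$ of $A$ such that, up to quasi-isomorphism, $E$ is cellular on $K$ with finite-dimensional stalks on each open simplex. Lifting $K$ to $V$ produces a $\Lambda$-equivariant triangulation $\widetilde K$ whose simplices are indexed bijectively by $\Lambda \times K$. The cellular cochain complex computing $\derR q_{!}(\pi^{\ast} E)$ is then a bounded complex of $R$-modules whose $i$-th term has the form $\bigoplus_{\sigma \in K_i} V_{\sigma} \tensor_{\CC} R$, for finite-dimensional $\CC$-vector spaces $V_{\sigma}$ built from the stalks of $E$ along $\sigma$. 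Since $K$ is finite, this is a bounded complex of finitely generated free $R$-modules, which exhibits $\derR \pl(E \tensor_{\CC} \shL_R)$ as an object of $\Dbcoh(R)$.

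The main obstacle is the cellular step: justifying rigorously that a constructible complex on $A$ admits, after passage to a fine enough compatible triangulation, a cellular representative whose pullback to the $\Lambda$-equivariant triangulation of $V$ is computed by a bounded complex of finitely generated free $R$-modules. This is standard in the theory of constructible sheaves on triangulable spaces (in the spirit of Kashiwara--Schapira), but requires some care in choosing $K$ finely enough and in replacing $E$ by a cellular representative; alternatively, one can avoid explicit triangulations by running a Mayer--Vietoris spectral sequence for a finite good cover of $A$ by contractible opens and using that $\pi^{-1}$ of each intersection is a disjoint union indexed by $\Lambda$.
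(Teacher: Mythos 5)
Your argument is correct, but it takes a substantially more hands-on route than the paper. The paper's proof is two lines: since the stalks of $E \tensor_{\CC} \shL_M$ are of the form $E_a \tensor_{\CC} M$, hence finitely generated over the Noetherian ring $R$, the tensor product $E \tensor_{\CC} \shL_M$ is itself a constructible complex of sheaves of \emph{$R$-modules}, and the finiteness theorem for direct images of constructible complexes (cited as \cite{Dimca}*{Corollary~4.1.6}) immediately gives $\derR \pl \bigl( E \tensor_{\CC} \shL_M \bigr) \in \Dbcoh(R)$. What you do instead is (i) reduce to $M = R$ via a bounded finite free resolution and the exactness of $N \mapsto \shL_N$, (ii) identify $\shL_R \simeq \pi_! \, \CC_V$ and apply the projection formula --- an identification the paper explicitly mentions and explicitly declines to use --- and (iii) reprove the relevant finiteness by lifting a finite triangulation of $A$ adapted to $E$ to a $\Lambda$-equivariant triangulation of $V$, obtaining a bounded complex of finitely generated free $R$-modules of compactly supported cellular cochains. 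Step (iii) is exactly the content of the general finiteness theorem being invoked in the paper, specialized to a free $\Lambda$-cover; you are right to flag the cellular-representative step as the delicate point, and also right to use $q_!$ (direct sums over the simplices of $\widetilde{K}$) rather than ordinary cochains, which would produce non-finitely-generated products over $\Lambda$. Your approach buys an explicit finite free $R$-complex representing the transform (and makes the $\QQ$-structure of Proposition~\ref{prop:subfield} visible), at the cost of redoing standard constructible-sheaf finiteness; the paper's approach is shorter and handles arbitrary finitely generated $M$ in one stroke, with no reduction to $M = R$ needed.
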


\begin{proof}
Since $E$ is a constructible complex of sheaves of $\CC$-vector spaces, the tensor
product $E \tensor_{\CC} \shL_M$ is a constructible complex of sheaves of
$R$-modules. By \cite{Dimca}*{Corollary~4.1.6}, its direct image is thus an object of
$\Dbcoh(R)$.
\end{proof}

To understand how $\derR \pl \bigl( E \tensor_{\OA} \shL_M \bigr)$ depends on $M$, we
will need the following auxiliary lemma. Recall that a \define{fine sheaf} on a
manifold is a sheaf admitting partitions of unity; such sheaves are acyclic for
direct image functors.

\begin{lemma} \label{lem:functor}
Let $\shF$ be a fine sheaf of $\CC$-vector spaces on $A$. Then the space of global
sections $H^0 \bigl( A, \shF \tensor_{\CC} \shL_R \bigr)$ is a flat $R$-module, and
for every finitely generated $R$-module $M$, one has
\[
	H^0 \bigl( A, \shF \tensor_{\CC} \shL_M \bigr) \simeq
		H^0 \bigl( A, \shF \tensor_{\CC} \shL_R \bigr) \tensor_R M,
\]
functorially in $M$.
\end{lemma}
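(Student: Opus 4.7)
The plan is to reduce everything to two observations: (i) the functor $N \mapsto \shL_N$ from $R$-modules to sheaves of $R$-modules on $A$ is exact, and (ii) each sheaf $\shF \tensor_{\CC} \shL_N$ inherits a $C^{\infty}_A$-module structure from $\shF$ and is therefore fine, hence $\Gamma$-acyclic on the compact manifold $A$. Once these are in hand, the isomorphism statement and the flatness assertion both fall out from a finite presentation of $M$.

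First I would identify $\shL_N$ with the tensor product $\shL_R \tensor_R \underline{N}$, where $\underline{N}$ is the constant sheaf on $A$ with value $N$; this is easily checked on stalks, where it reduces to $R \tensor_R N = N$, and on a trivializing open set for $\shL_R$ the monodromy actions agree. Since tensor product of $R$-modules is right exact and stalks of $\shL_R$ are free of rank one over $R$, the functor $N \mapsto \shL_N$ is in fact exact. Tensoring over $\CC$ with $\shF$ is exact on sheaves of $\CC$-vector spaces (stalkwise it is just a tensor product of vector spaces), so the composition $N \mapsto \shF \tensor_{\CC} \shL_N$ is exact from $R$-modules to sheaves of $R$-modules on $A$.

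Next, because $\shF$ is a module over the sheaf of smooth $\CC$-valued functions on $A$, so is $\shF \tensor_{\CC} \shL_N$ via the action on the first factor; thus $\shF \tensor_{\CC} \shL_N$ is fine, and in particular acyclic for $H^0(A, \argbl)$. Consequently the functor
\[
	\Phi(N) = H^0\bigl( A, \shF \tensor_{\CC} \shL_N \bigr)
\]
is exact on the category of $R$-modules and commutes with finite direct sums. Now choose a finite presentation $R^p \to R^q \to M \to 0$, which exists since $R \cong \CC[\Lambda]$ is a Noetherian Laurent polynomial ring. Applying $\Phi$ yields a right exact sequence whose first two terms are $\Phi(R) \tensor_R R^p \to \Phi(R) \tensor_R R^q$; comparing with the analogous right exact sequence obtained by tensoring the presentation with $\Phi(R)$ over $R$ and taking cokernels gives the functorial isomorphism $\Phi(M) \cong \Phi(R) \tensor_R M$.

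Flatness of $\Phi(R) = H^0(A, \shF \tensor_{\CC} \shL_R)$ is then automatic: since $\Phi$ is exact, the established isomorphism shows that $\argbl \tensor_R \Phi(R)$ is exact on finitely generated $R$-modules, which over a Noetherian ring is equivalent to flatness. I do not expect any serious obstacle here; the only point that requires a moment's thought is verifying that fineness of $\shF$ is preserved by $\tensor_{\CC} \shL_N$, but this is immediate from the $C^{\infty}_A$-module structure.
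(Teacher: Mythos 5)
Your proof is correct and follows essentially the paper's own route: the paper likewise observes that each $\shF \tensor_{\CC} \shL_M$ is again fine, so that $M \mapsto H^0\bigl(A, \shF \tensor_{\CC} \shL_M\bigr)$ is an exact, direct-sum-preserving functor, and then either invokes the Eilenberg--Watts theorem or (as it explicitly notes, and as you do) concludes directly by resolving $M$ by finite free $R$-modules. The only cosmetic point is that fineness of $\shF \tensor_{\CC} \shL_N$ is better justified by tensoring the partition-of-unity endomorphisms of $\shF$ with the identity of $\shL_N$, since the hypothesis that $\shF$ is fine does not literally provide a $C^{\infty}$-module structure.
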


\begin{proof}
It is easy to see that each sheaf of the form $\shF \tensor_{\CC} \shL_M$ is again a
fine sheaf. Consequently, $M \mapsto H^0 \bigl( A, \shF \tensor_{\CC} \shL_M
\bigr)$ is an exact functor from the category of finitely generated $R$-modules to
the category of $R$-modules. Since the functor also preserves direct sums, the result
follows from the Eilenberg-Watts theorem in homological algebra \cite{Watts}. A
direct proof can be had by resolving $M$ by a bounded complex of free $R$-modules of
finite rank, and using the exactness of the functor.
\end{proof}

\begin{proposition} \label{prop:iso-c}
Let $E \in \Dbc(\CC_A)$. Then for every finitely generated $R$-module $M$, one has an
isomorphism
\[
	\derR \pl \bigl( E \tensor_{\CC} \shL_M \bigr) \simeq
		\derR \pl \bigl( E \tensor_{\CC} \shL_R \bigr) \Ltensor_R M,
\]
functorial in $M$.
\end{proposition}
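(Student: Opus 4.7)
My plan is to choose a bounded fine resolution of $E$ and reduce, term by term, to Lemma \ref{lem:functor}. Let $\mathcal{E}^{\bullet}_A$ denote the de Rham complex of smooth differential forms on $A$, which is a bounded resolution of $\CC_A$ by fine sheaves of $C^{\infty}_A$-modules. Since every $\CC$-module is flat, forming the total complex
\[
\shF^{\bullet} = \mathrm{Tot} \bigl( E \tensor_{\CC} \mathcal{E}^{\bullet}_A \bigr)
\]
produces a bounded complex of $C^{\infty}_A$-modules quasi-isomorphic to $E$, and each $\shF^n$ is a fine sheaf. For any finitely generated $R$-module $M$, the sheaf $\shF^n \tensor_{\CC} \shL_M$ is again a $C^{\infty}_A$-module, hence fine and therefore $\Gamma$-acyclic, so
\[
	\derR \pl \bigl( E \tensor_{\CC} \shL_M \bigr) \simeq
		\Gamma \bigl( A, \shF^{\bullet} \tensor_{\CC} \shL_M \bigr).
\]

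Next I would apply Lemma \ref{lem:functor} to each term $\shF^n$, which yields a natural isomorphism of complexes of $R$-modules
\[
	\Gamma \bigl( A, \shF^{\bullet} \tensor_{\CC} \shL_M \bigr) \simeq
		\Gamma \bigl( A, \shF^{\bullet} \tensor_{\CC} \shL_R \bigr) \tensor_R M,
\]
functorial in $M$. The same lemma also asserts that each $\Gamma(A, \shF^n \tensor_{\CC} \shL_R)$ is flat over $R$; hence $\Gamma(A, \shF^{\bullet} \tensor_{\CC} \shL_R)$ is a bounded complex of flat $R$-modules representing $\derR \pl(E \tensor_{\CC} \shL_R)$, and so tensoring with $M$ over $R$ computes the derived tensor product $\derR \pl(E \tensor_{\CC} \shL_R) \Ltensor_R M$. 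Combining these two identifications gives the asserted isomorphism.

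Most of the work is already accomplished by Lemma \ref{lem:functor}, so the main obstacle is simply exhibiting a bounded resolution of $E$ by sheaves to which that lemma applies. The key subtlety is that for $M = R$ the local system $\shL_R$ has infinite $\CC$-rank, so we cannot argue that $\shF^n \tensor_{\CC} \shL_M$ is fine by pretending it is locally a finite direct sum of copies of $\shF^n$; the $C^{\infty}_A$-module structure on $\shF^n$ is what saves us, since multiplication by any $C^{\infty}$ partition of unity induces endomorphisms of $\shF^n \tensor_{\CC} \shL_M$ regardless of the size of $M$. Once this is settled, the rest is a mechanical assembly of the lemma with the standard principle that a bounded complex of flat modules computes the derived tensor product.
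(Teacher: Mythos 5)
Your argument is correct and takes essentially the same route as the paper: represent $E$ by a bounded complex of fine sheaves, apply Lemma~\ref{lem:functor} termwise, and use the flatness assertion of that lemma to identify the plain tensor product over $R$ with the derived one. The only difference is in how the fine resolution is produced --- the paper goes through the Riemann--Hilbert correspondence and the smooth de Rham complex of a regular holonomic $\Dmod_A$-module, whereas you tensor $E$ directly with the smooth de Rham resolution of $\CC_A$ --- but the paper explicitly states that any such choice works, so this is immaterial.
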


\begin{proof}
We begin by choosing a bounded complex $(\shF^{\bullet}, d)$ of fine sheaves
quasi-isomorphic to $E$. One way to do this is as follows. By the Riemann-Hilbert
correspondence, $E \simeq \DR_A(\Mmod)$ for some $\Mmod \in \Dbrh(\Dmod_A)$; if
we now let $\shA_A^k$ denote the sheaf of smooth $k$-forms on the complex manifold $A$,
then by the Poincar\'e lemma, 
\[
	\Bigl\lbrack \shA_A^0 \tensor \Mmod \to \shA_A^1 \tensor \Mmod \to \dotsb 
		\to \shA_A^{2g} \tensor \Mmod \Bigr\rbrack,
\]
placed in degrees $-g, \dotsc, g$, is a complex of fine sheaves quasi-isomorphic to $E$.
For any such choice, $\derR \pl \bigl( E \tensor_{\CC} \shL_M \bigr) \in \Dbcoh(R)$
is represented by the bounded complex of $R$-modules with terms
\[
	H^0 \bigl( A, \shF^{\bullet} \tensor_{\CC} \shL_M \bigr) \simeq
		H^0 \bigl( A, \shF^{\bullet} \tensor_{\CC} \shL_R \bigr) \tensor_R M,
\]
and so the assertion follows from Lemma~\ref{lem:functor}.
\end{proof}

Now let $\rho \in \Char(A)$ be an arbitrary character; recall that
$\mmrho$ is the maximal ideal of $R$ generated by the elements $e_{\lambda} -
\rho(\lambda)$, for $\lambda \in \Lambda$. Using the notation introduced above, we
therefore have the alternative description $\CCrho \simeq \shL_{R / \mmrho}$ for the
local system corresponding to $\rho$.

\begin{corollary} \label{cor:fibers}
For any character $\rho \in \Char(A)$, we have 
\[
	\derR \pl \bigl( E \tensor_{\CC} \CCrho \bigr) \simeq
	\derR \pl \bigl( E \tensor_{\CC} \shL_R \bigr) \Ltensor_R R/\mmrho
\]
as objects of $\Dbcoh(\CC)$.
\end{corollary}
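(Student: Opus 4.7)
The corollary is an immediate specialization of the preceding proposition, so my plan is very short.

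First I would identify the $R$-module $R/\mmrho$ concretely: as a $\CC$-vector space it is one-dimensional, and by construction of $\mmrho$ (generated by $e_{\lambda} - \rho(\lambda)$ for $\lambda \in \Lambda$), the generator $e_{\lambda}$ acts by multiplication by $\rho(\lambda)$. Thus $R/\mmrho$, viewed as a representation of $\Lambda = \pi_1(A, 0_A)$, is exactly the one-dimensional representation defined by the character $\rho$. Under the equivalence between representations of $\pi_1(A, 0_A)$ and local systems on $A$, this representation corresponds to $\CCrho$. Consequently one has the identification of local systems
\[
    \shL_{R/\mmrho} \simeq \CCrho,
\]
which is precisely the observation recorded in the paragraph preceding the corollary.

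Next I would simply apply Proposition~\ref{prop:iso-c} with $M = R/\mmrho$ (a finitely generated $R$-module since it is even cyclic). That proposition yields a functorial isomorphism
\[
    \derR \pl \bigl( E \tensor_{\CC} \shL_{R/\mmrho} \bigr) \simeq
        \derR \pl \bigl( E \tensor_{\CC} \shL_R \bigr) \Ltensor_R R/\mmrho
\]
in $\Dbcoh(\CC)$ (the derived tensor product landing in $\Dbcoh(\CC)$ since $R/\mmrho$ is a perfect complex of $R$-modules, or more directly because $\derR \pl \bigl( E \tensor_{\CC} \shL_R \bigr) \in \Dbcoh(R)$ admits a bounded resolution by finitely generated projective $R$-modules). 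Substituting $\shL_{R/\mmrho} \simeq \CCrho$ on the left gives the claimed formula.

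There is no real obstacle here; the whole point is that the corollary is the $M = R/\mmrho$ case of the proposition, and the only thing to verify is the recognition $\shL_{R/\mmrho} \simeq \CCrho$, which is immediate from the definitions of $\mmrho$ and of the functor $M \mapsto \shL_M$.
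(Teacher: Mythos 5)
Your proposal is correct and is exactly the argument the paper intends: the identification $\CCrho \simeq \shL_{R/\mmrho}$ is recorded in the paragraph immediately preceding the corollary, and the corollary is then the case $M = R/\mmrho$ of Proposition~\ref{prop:iso-c}. Nothing further is needed.
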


\begin{note}
We may thus consider $\derR \pl \bigl( E \tensor_{\CC} \shL_R \bigr) \in \Dbcoh(R)$
as being something like a ``Fourier-Mukai transform'' of the constructible complex $E
\in \Dbc(\CC_A)$. In this setting, however, the transform does not determine the 
original constructible complex: for example, if $E$ is any constructible sheaf whose
support is a finite union of points of $A$, then $\derR \pl \bigl( E \tensor_{\CC}
\shL_R \bigr)$ is a free $R$-module of finite rank.
\end{note}

The results above are sufficient to prove that the cohomology support loci of $E$ are
algebraic subsets of $\Char(A)$.

\begin{theorem} \label{thm:alg-c}
If $E \in \Dbc(\CC_A)$, then each cohomology support locus $S_m^k(E)$ is an
algebraic subset of $\Char(A)$.
\end{theorem}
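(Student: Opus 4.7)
The plan is to reduce the statement to the classical fact that, for a bounded complex of finitely generated free modules over a Noetherian ring, the dimension of the fiberwise cohomology at closed points is upper semicontinuous in the Zariski topology. All the work has already been done in Proposition~\ref{prop:iso-c} and Corollary~\ref{cor:fibers}; what remains is to choose a good model.

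First, I would observe that $R = \CC[\Lambda]$ is a Laurent polynomial ring in $2g$ variables, hence a regular Noetherian ring of finite global dimension. Consequently, every object of $\Dbcoh(R)$ is quasi-isomorphic to a bounded complex
\[
	C^{\bullet} = \Bigl\lbrack C^a \xrightarrow{d^a} C^{a+1} \to \dotsb \to C^{b-1} \xrightarrow{d^{b-1}} C^b \Bigr\rbrack
\]
of finitely generated free $R$-modules. Applying this to $\derR \pl \bigl( E \tensor_{\CC} \shL_R \bigr) \in \Dbcoh(R)$, I obtain such a free resolution $C^{\bullet}$.

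Next, by Corollary~\ref{cor:fibers}, for every $\rho \in \Char(A)$ one has
\[
	\HH^k \bigl( A, E \tensor_{\CC} \CCrho \bigr) \simeq
		H^k \bigl( C^{\bullet} \tensor_R R/\mmrho \bigr).
\]
Since the $C^i$ are free of finite rank, the tensor product with $R/\mmrho$ is simply a complex of finite-dimensional $\CC$-vector spaces whose differentials $d^i \tensor \id$ are the evaluations of the matrices representing $d^i$ at the character $\rho$. Therefore
\[
	\dim \HH^k \bigl( A, E \tensor_{\CC} \CCrho \bigr)
		= \rk C^k - \rk \bigl( d^{k-1} \tensor_R R/\mmrho \bigr) - \rk \bigl( d^k \tensor_R R/\mmrho \bigr).
\]
The condition $\dim \HH^k \geq m$ is thus equivalent to the inequality $\rk(d^{k-1} \tensor R/\mmrho) + \rk(d^k \tensor R/\mmrho) \leq \rk C^k - m$, which cuts out a Zariski-closed subset of $\Spec R = \Char(A)$, since the locus where the rank of a matrix over $R$ drops below a given bound is defined by the vanishing of the appropriate minors.

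The only potentially delicate step is justifying the existence of the bounded free model $C^{\bullet}$; this is immediate from the finite global dimension of $R$ once one knows that $\derR \pl(E \tensor_{\CC} \shL_R) \in \Dbcoh(R)$, which was already established. Putting everything together yields that $S_m^k(E)$ is an algebraic subset of $\Char(A)$.
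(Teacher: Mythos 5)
Your proposal is correct and follows essentially the same route as the paper: the paper also reduces to representing $\derR \pl(E \tensor_{\CC} \shL_R) \in \Dbcoh(R)$ by a bounded complex of locally free modules and invoking semicontinuity of fiberwise cohomology via vanishing of minors (this is the content of the phrase ``by the same argument as in the proof of Proposition~\ref{prop:alg-h}''), with Corollary~\ref{cor:fibers} supplying the identification of fibers. The only cosmetic difference is that you insist on \emph{free} rather than locally free or projective modules, which is harmless since the rank conditions are local in any case.
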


\begin{proof}
Recall that $\Char(A)$ is the complex manifold associated to the complex algebraic
variety $\Spec R$. Thus $\derR \pl \bigl( E \tensor_{\CC} \shL_R
\bigr) \in \Dbcoh(R)$ determines an object in the bounded derived category of
algebraic coherent sheaves on $\Char(A)$, whose fiber at any closed point $\rho$
computes the hypercohomology of the twist $E \tensor_{\CC} \CCrho$, according to
Corollary~\ref{cor:fibers}.  We conclude that
\[
	S_m^k(E) = \Menge{\rho \in \Char(A)}{\dim \HH^k \Bigl( \derR \pl 		
		\bigl( E \tensor_{\CC} \shL_R \bigr) \Ltensor_R R/\mmrho \Bigr) \geq m},
\]
and by the same argument as in the proof of Proposition~\ref{prop:alg-h}, this
description implies that $S_m^k(E)$ is an algebraic subset of $\Char(A)$.
\end{proof}

\begin{proposition} \label{prop:subfield}
Let $k$ be any subfield of $\CC$. If $E \in \Dbc(k_A)$ is a constructible complex of
sheaves of $k$-vector spaces, then $\derR \pl \bigl( E \tensor_k \shL_R \bigr)$ is
defined over $k$.
\end{proposition}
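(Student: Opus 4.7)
The plan is to repeat the entire setup of \subsecref{subsec:constructible} with $k$-coefficients in place of $\CC$. Set $R_k = k \lbrack \Lambda \rbrack$, so that $R = R_k \tensor_k \CC$ is free, and in particular flat, over $R_k$. Let $\shL_{R_k}$ denote the local system of $R_k$-modules on $A$ attached to the left-regular representation of $\Lambda$ on $R_k$. At the level of stalks and monodromies, $\shL_R$ agrees with the base change $\shL_{R_k} \tensor_k \CC$, the $R$-module structure on the right-hand side coming from that of $R_k$ on the first factor. The goal is then to produce an object $C_k \in \Dbcoh(R_k)$ together with a canonical isomorphism
\[
	\derR \pl \bigl( E \tensor_k \shL_R \bigr) \simeq C_k \Ltensor_{R_k} R
\]
in $\Dbcoh(R)$; this is precisely what it means for $\derR \pl(E \tensor_k \shL_R)$ to be defined over $k$.

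First I would verify that $C_k = \derR \pl \bigl( E \tensor_k \shL_{R_k} \bigr)$ lies in $\Dbcoh(R_k)$. The argument is formally the same as that of the first proposition of \subsecref{subsec:constructible}: $E \tensor_k \shL_{R_k}$ is a constructible complex of sheaves of $R_k$-modules, and since $A$ is compact and $R_k$ is a Noetherian ring (a Laurent polynomial ring in $2g$ variables over $k$), the cohomology sheaves of its derived pushforward are finitely generated over $R_k$ by \cite{Dimca}*{Corollary~4.1.6}.

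Next I would establish the base change isomorphism above. Choose a bounded resolution $(\shF^{\bullet}, d)$ of $E$ by soft sheaves of $k$-vector spaces via the Godement construction; this works over an arbitrary commutative ring of coefficients and so avoids the reliance on smooth differential forms that appears in the proof of Proposition~\ref{prop:iso-c}. Then $\shF^{\bullet} \tensor_k \shL_{R_k}$ is a bounded complex of soft sheaves of $R_k$-modules whose complex of global sections represents $C_k$. Tensoring further with the flat $R_k$-module $R$ yields $\shF^{\bullet} \tensor_k \shL_R$, which is a soft resolution of $E \tensor_k \shL_R$ by sheaves of $R$-modules and whose complex of global sections computes $\derR \pl \bigl( E \tensor_k \shL_R \bigr)$; flatness of $R$ over $R_k$ identifies this with $C_k \Ltensor_{R_k} R$.

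The main (and rather mild) obstacle is producing the soft resolution $\shF^{\bullet}$ of $E$ compatibly with tensoring by $\shL_{R_k}$, since we cannot directly use the smooth de Rham complex employed for $\CC$-coefficients. Once the Godement construction is substituted for it, every other step of \subsecref{subsec:constructible} is formal and $k$-linear, and descends to $R_k$ without change.
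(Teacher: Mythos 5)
Your proof is correct and follows essentially the same route as the paper, which simply observes that $\shL_R$ is the complexification of the local system attached to $\QQ\lbrack\Lambda\rbrack$ (hence to $k\lbrack\Lambda\rbrack$) and declares the descent of $\derR \pl(E \tensor_k \shL_R)$ to $\Dbcoh\bigl(k\lbrack\Lambda\rbrack\bigr)$ ``evident.'' You merely make the evident part explicit — replacing the smooth-form resolution of Proposition~\ref{prop:iso-c} by a Godement resolution so the base-change identification works over $k$ — which is a sound filling-in of detail rather than a different argument.
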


\begin{proof}
Let $\QQ \lbrack \Lambda \rbrack$ be the group ring with rational coefficients. Then
$\shL_R$ is the complexification of the associated local system of $\QQ$-vector
spaces, and so $\derR \pl \bigl( E \tensor_k \shL_R \bigr)$ is in
an evident manner the complexification of an object of $\Dbcoh \bigl( k \lbrack
\Lambda \rbrack \bigr)$.
\end{proof}

\subsection{Structure theorem}
\label{subsec:structure}

The goal of this section is to prove a fundamental structure theorem for cohomology
support loci of constructible and holonomic complexes. We refer to
\cite{HTT}*{Chapter~3} for details about holonomic $\Dmod$-modules and holonomic
complexes.

\begin{lemma} \label{lem:relationship}
Let $\Mmod \in \Dbh(\Dmod_A)$ be a holonomic complex on $A$. Then we have
\[
	\Phi \bigl( S_m^k(\Mmod) \bigr) = S_m^k \bigl( \DR_A(\Mmod) \bigr).
\]
for every $k,m \in \ZZ$.
\end{lemma}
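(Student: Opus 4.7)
The plan is to produce, for each $(L, \nabla) \in \Ash$ with $\rho = \Phi(L,\nabla) = \ker \nabla$, a canonical isomorphism of complexes of sheaves of $\CC$-vector spaces on $A$
\[
    \DR_A\bigl(\Mmod \tensor_{\OA} (L, \nabla)\bigr) \simeq \DR_A(\Mmod) \tensor_{\CC} \CCrho,
\]
from which the lemma follows immediately by taking hypercohomology: since $\Phi$ is a bijection, this shows $(L,\nabla) \in S_m^k(\Mmod)$ if and only if $\rho \in S_m^k\bigl(\DR_A(\Mmod)\bigr)$, which is exactly the claimed equality of subsets.

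To construct the isomorphism, I would work in the complex-analytic topology on $A$. The classical correspondence between holomorphic line bundles with integrable connection and rank-one local systems provides a canonical analytic identification
\[
    (L, \nabla) \;\simeq\; \bigl(\CCrho \tensor_{\CC} \shO_A,\; 1 \tensor d\bigr),
\]
since $\CCrho = \ker \nabla$ recovers $L$ by tensoring with $\shO_A$. Tensoring with $\Mmod$ over $\OA$ then gives $\Mmod \tensor_{\CC} \CCrho$ equipped with connection $\nabla_{\Mmod} \tensor 1$, because $\CCrho$ is flat. Applying the de Rham functor, which acts termwise through $\Omega_A^{\bullet} \tensor (\argbl)$ with differential built from $\nabla_{\Mmod} \tensor 1$, then yields $\DR_A(\Mmod) \tensor_{\CC} \CCrho$ on the nose, since the $\CCrho$ factor is annihilated by every differential.

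I do not expect any genuine obstacle: the passage between algebraic and analytic $\Dmod_A$-modules is immediate on the projective variety $A$ by GAGA, and the hypercohomology appearing in \eqref{eq:CSL-h} is computed by the analytic de Rham complex, exactly as in \eqref{eq:CSL-c}. The only care needed is to spell out the identification above at the level of complexes rather than merely up to quasi-isomorphism, but this is routine once one fixes the trivialization $L = \CCrho \tensor_{\CC} \shO_A$ coming from flat sections.
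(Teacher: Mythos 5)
Your argument is correct and is essentially the paper's own proof: both rest on the canonical identification $L = (\ker\nabla)\tensor_{\CC}\OX[\text{sic }\OA]$, which upgrades the natural map $\DR_A(\Mmod)\tensor_{\CC}\CCrho \to \DR_A(\Mmod\tensor_{\OA}(L,\nabla))$ to a termwise isomorphism of complexes, after which the statement follows by taking hypercohomology. The only difference is cosmetic (you build the isomorphism from the trivialization by flat sections, the paper starts from the inclusion $\ker\nabla\subseteq L$ and then checks it is an isomorphism).
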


\begin{proof}
Let $(L, \nabla)$ be a line bundle with integrable connection on $A$. The associated
local system $\ker \nabla$ is a subsheaf of $L$, and so we obtain a morphism of complexes
\begin{equation} \label{eq:morphism}
	\DR_A(\Mmod) \tensor_{\CC} (\ker \nabla) \to 
		\DR_A \bigl( \Mmod \tensor_{\OA} (L, \nabla) \bigr).
\end{equation}
Now $(\ker \nabla) \tensor_{\CC} \OA = L$, and therefore $\Omega_A^k \tensor_{\OA} \Mmod
\tensor_{\CC} (\ker \nabla) = \Omega_A^k \tensor_{\OA} \Mmod \tensor_{\OA} L$. This
shows that the two complexes in \eqref{eq:morphism} are isomorphic to each other, and
gives the desired relation between their hypercohomology groups.
\end{proof}

With the help of the Fourier-Mukai transform, it is easy to show that 
cohomology support loci are algebraic subsets of $\Ash$.

\begin{proposition} \label{prop:alg-h}
If $\Mmod \in \Dbcoh(\Dmod_A)$, then each cohomology support locus $S_m^k(\Mmod)$ is
an algebraic subset of $\Ash$.
\end{proposition}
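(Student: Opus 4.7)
The plan is to realize $S_m^k(\Mmod)$ as a cohomology jumping locus of the coherent complex $\FM_A(\Mmod) \in \Dbcoh(\OAsh)$, and then invoke the standard semicontinuity result that such loci are Zariski closed.

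First I would show that for every point $(L, \nabla) \in \Ash$, with inclusion morphism $i_{(L,\nabla)} \colon \{(L,\nabla)\} \hookrightarrow \Ash$, there is a natural isomorphism
\[
	\derL i_{(L,\nabla)}^{\ast} \FM_A(\Mmod) \simeq
		\derR \Gamma\bigl(A, \DR_A(\Mmod \tensor_{\OA} (L, \nabla))\bigr).
\]
One way to obtain this is base change applied directly to the defining formula \eqref{eqn:laumon} for the Fourier-Mukai transform along the cartesian square cutting out the fiber over $(L,\nabla)$; the relative de Rham complex restricts to $\DR_A(\Mmod \tensor (L, \nabla))$ because $(\Psh,\nablash)$ restricts to $(L, \nabla)$ on $A \times \{(L,\nabla)\}$. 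Alternatively, it suffices to check this for the origin $0 \in \Ash$, where $(\Psh,\nablash)$ restricts to $(\OA, d)$, and then transport by translation via Theorem~\ref{thm:Laumon}\ref{en:Laumon1}, using $\derL i_{(L,\nabla)}^{\ast} = \derL i_{0}^{\ast} \circ \derL t_{(L,\nabla)}^{\ast}$.

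Taking $k$-th cohomology of both sides gives
\[
	\HH^k\bigl(A, \DR_A(\Mmod \tensor_{\OA} (L, \nabla))\bigr)
		= \shH^k \bigl(\derL i_{(L,\nabla)}^{\ast} \FM_A(\Mmod)\bigr),
\]
and therefore
\[
	S_m^k(\Mmod) = \Menge{(L,\nabla) \in \Ash}
		{\dim \shH^k \bigl(\derL i_{(L,\nabla)}^{\ast} \FM_A(\Mmod)\bigr) \geq m}.
\]

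Finally I would apply the standard semicontinuity result: for any $\shF^{\bullet} \in \Dbcoh(\shO_X)$ on a quasi-projective algebraic variety $X$ and any $k, m \in \ZZ$, the set $\menge{x \in X}{\dim \shH^k(\derL i_x^{\ast} \shF^{\bullet}) \geq m}$ is Zariski closed in $X$. Locally on $X$, one represents $\shF^{\bullet}$ by a bounded complex of free $\shO_X$-modules of finite rank; the dimensions of the cohomology of the derived fiber are then determined by the ranks of the matrices of differentials at each point, so the jumping loci are defined by the vanishing of minors and are algebraic.

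The argument is essentially formal: once the identification of the derived fiber is in hand, algebraicity is immediate from semicontinuity. The only point requiring a small verification is the base-change identification, but this is standard since the morphism $p_2 \colon A \times \Ash \to \Ash$ is proper (as $A$ is projective) and the formation of the relative de Rham complex commutes with base change.
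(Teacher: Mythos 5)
Your proposal is correct and follows essentially the same route as the paper: identify the derived fiber of $\FM_A(\Mmod)$ at $(L,\nabla)$ with $\derR\Gamma\bigl(A, \DR_A(\Mmod \tensor_{\OA} (L,\nabla))\bigr)$ by base change, represent $\FM_A(\Mmod)$ by a bounded complex of locally free sheaves on the quasi-projective variety $\Ash$, and conclude that the jumping loci are cut out by minors of the differentials, hence algebraic. No gaps.
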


\begin{proof}
Since $\Ash$ is a quasi-projective algebraic variety, we may represent $\FM_A(\Mmod)$
by a bounded complex $(\shE^{\bullet}, d)$ of locally free sheaves on $\Ash$. Now let
$(L, \nabla)$ be a line bundle with integrable connection, and let $i_{(L, \nabla)}$
denote the inclusion
map. By the base change theorem,
\[
	\derR i_{(L, \nabla)}^{\ast} \FM_A(\Mmod) 
		\simeq \DR_A \bigl( \Mmod \tensor_{\OA} (L, \nabla) \bigr),
\]
and so we have
\[
	S_m^k(\Mmod) = \Menge{(L, \nabla) \in \Ash}%
		{\dim \HH^k \bigl( i_{(L, \nabla)}^{\ast} (\shE^{\bullet}, d) \bigr) \geq m}.
\]
This description shows that $S_m^k(\Mmod)$ is an algebraic subset of $\Ash$, as
claimed.
\end{proof}

We can now prove the structure theorem from the introduction.

\begin{proof}[Proof of Theorem~\ref{thm:h-linear}]
Let $\Mmod \in \Dbh(\Dmod_A)$ be a holonomic complex. Then $\DR_A(\Mmod)$ is
constructible, and we have
\[
	\Phi \bigl( S_m^k(\Mmod) \bigr) = S_m^k \bigl( \DR_A(\Mmod) \bigr)
\]
by Lemma~\ref{lem:relationship}. Proposition~\ref{prop:alg-h} shows that $S_m^k(\Mmod)$
is an algebraic subset of $\Ash$; Theorem~\ref{thm:alg-c} shows that $S_m^k \bigl(
\DR_A(\Mmod) \bigr)$ is an algebraic subset of $\Char(A)$. We conclude from Simpson's
Theorem~\ref{thm:simpson} that both must be finite unions of linear subvarieties of
$\Ash$ and $\Char(A)$, respectively. The assertion about objects of geometric origin
is proved in \subsecref{subsec:geometric} below.
\end{proof}

\begin{proof}[Proof of Corollary~\ref{cor:FM-linear}]
A familiar consequence of the base change theorem is that we have, for every $n \in
\ZZ$, an equality of sets
\begin{equation} \label{eq:base-change}
	\bigcup_{k \geq n} \Supp \shH^k \FM_A(\Mmod) = 
		\bigcup_{k \geq n} S^k(\Mmod).
\end{equation}
Both assertions therefore follow from Theorem~\ref{thm:h-linear}.
\end{proof}

\subsection{Objects of geometric origin}
\label{subsec:geometric}

In this section, we study cohomology support loci for semisimple regular holonomic
$\Dmod_A$-modules of geometric origin, as defined in \cite{BBD}*{6.2.4}. To begin
with, recall the following definition due to Saito \cite{Saito-MM}*{Definition~2.6}.

\begin{definition}
A mixed Hodge module is said to be \define{of geometric origin} if it is 
obtained by applying several of the standard cohomological functors $H^i \fl$,
$H^i f_!$, $H^i \fu$, $H^i f^!$, $\psi_g$, $\phi_{g, 1}$, $\mathbf{D}$, $\boxtimes$,
$\oplus$, $\otimes$, and $\shHom$ to the trivial Hodge structure $\QQ^H$ of weight
zero, and then taking subquotients in the category of mixed Hodge modules.
\end{definition}

One of the results of Saito's theory is that any semisimple perverse sheaf of
geometric origin, in the sense of \cite{BBD}*{6.2.4}, is a direct summand of a
perverse sheaf underlying a mixed Hodge module of geometric origin.
Consequently, any semisimple regular holonomic $\Dmod$-module of geometric origin is
a direct summand of a $\Dmod$-module underlying a mixed Hodge module of
geometric origin.

\begin{theorem} \label{thm:geom-origin}
Let $\Mmod$ be a semisimple regular holonomic $\Dmod_A$-module of geometric origin.
Then each cohomology support locus $S_m^k(\Mmod)$ is a finite union of arithmetic
linear subvarieties of $\Ash$.
\end{theorem}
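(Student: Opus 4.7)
The plan is to strengthen Theorem~\ref{thm:h-linear} in the geometric-origin case by invoking the arithmeticity criterion recalled in the note after the definition of arithmetic linear subvariety: when $A$ is defined over a number field, a point of $\Ash$ is torsion if and only if it is simultaneously $\QQb$-rational in the algebraic structure on $\Ash$ and, via $\Phi$, in the algebraic structure on $\Char(A)$. So the task reduces to showing that each $S_m^k(\Mmod)$ is a $\QQb$-algebraic subvariety of $\Ash$ whose $\Phi$-image is a $\QQb$-algebraic subvariety of $\Char(A)$.

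A preliminary spreading-out step puts us in that setting. By Saito's theory, $\Mmod$ is a direct summand of a $\Dmod_A$-module underlying a mixed Hodge module of geometric origin, hence is produced by applying the standard $\MHM$ functors to the trivial Hodge module $\QQ_X^H$ on some complex variety $X$; the whole construction spreads out to a number field inside $\QQb$. On the Betti side, $E = \DR_A(\Mmod)$ then descends to a perverse sheaf with $\QQb$-coefficients, so Proposition~\ref{prop:subfield} and the proof of Theorem~\ref{thm:alg-c} realize $S_m^k(E) = \Phi(S_m^k(\Mmod))$ as a $\QQb$-algebraic subvariety of $\Char(A)$. On the de Rham side, the $\QQb$-descent of $\Mmod$ furnishes a $\QQb$-algebraic descent of $\FM_A(\Mmod)$ to the $\QQb$-form of $\Ash$, and the proof of Proposition~\ref{prop:alg-h} then exhibits $S_m^k(\Mmod)$ as a $\QQb$-algebraic subvariety of $\Ash$.

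To conclude, fix an irreducible component $Z$ of $S_m^k(\Mmod)$. By Theorem~\ref{thm:h-linear}, $Z = (L_0, \nabla_0) \tensor \im(\fsh)$ for some surjection $f \colon A \to B$ with connected fibers, which we may take to be defined over $\QQb$. The subgroup $\im(\fsh) \subseteq \Ash$ is then $\QQb$-rational in both algebraic structures, and the class $\bar{z}$ of $(L_0, \nabla_0)$ in the quotient $Q = \Ash / \im(\fsh)$ is $\QQb$-rational in both algebraic structures on $Q$ inherited from $\Ash$ and $\Char(A)$. Applying Simpson's arithmeticity criterion \cite{Simpson}*{Proposition~3.4} to $Q$ -- which is identified up to a finite group with the $\natural$-moduli space of the identity component of $\ker f$ -- forces $\bar z$ to be torsion. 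Since $\im(\fsh) \cong \Bsh$ is a divisible abelian group, any torsion class in $Q$ lifts to a torsion point of $\Ash$, so we may take $(L_0, \nabla_0)$ to be torsion; this shows $Z$ is arithmetic.

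The main obstacle is the descent step: one must ensure that the perverse sheaf $E$ and the $\Dmod_A$-module $\Mmod$ descend compatibly to $\QQb$ in their respective algebraic settings, and that Simpson's Proposition~3.4 admits the quotient version used above. Each of these is routine in spirit but requires care, especially as ``geometric origin'' is a fairly rigid condition that must be tracked through each descent.
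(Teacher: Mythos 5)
Your overall strategy -- reduce arithmeticity to a bi-$\QQb$-rationality statement and quote Simpson -- is the right instinct, but you are quoting the wrong theorem of Simpson, and the gap this creates is not repairable by the spreading-out you propose. Simpson's Proposition~3.4 characterizes torsion points as the simultaneously $\QQb$-rational points of $\Char(A)$ and $\Ash$ \emph{only when $A$ is defined over a number field}. The abelian variety $A$ in Theorem~\ref{thm:geom-origin} is arbitrary: ``geometric origin'' constrains how $\Mmod$ is built from the standard functors, not the field of definition of $A$ (e.g.\ the constant Hodge module on any complex abelian variety is of geometric origin). No spreading-out step can put you ``in that setting,'' because $A$ is a fixed complex variety whose moduli may be transcendental; indeed, without a $\QQb$-form of $A$ there is not even a canonical $\QQb$-structure on $\Ash$ or on $\Char(A)$ against which to test rationality of $S_m^k(\Mmod)$ or of the class $\bar z$ in your quotient $Q$. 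So both the reduction in your first paragraph and the endgame in your third paragraph rest on a hypothesis that is not available.

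The statement of Simpson's that does apply to arbitrary $A$ is his Theorem~6.1 on \emph{absolute closed} subsets: $Z \subseteq \Ash$ is a finite union of arithmetic linear subvarieties provided that for \emph{every} field automorphism $\sigma \in \Aut(\CC/\QQ)$ the conjugate $\csig(Z) \subseteq \Asigsh$ has closed, $\QQb$-defined image in $\Char(\Asig)$. This is a purely Betti-side condition, but one that must be verified for all Galois conjugates simultaneously -- and that is exactly where the geometric-origin hypothesis enters: applying $\sigma$ to the finitely many varieties and morphisms used to build the mixed Hodge module $M$ produces a mixed Hodge module $\Msig$ on $\Asig$ with $\csig(Z) = S_m^k(\Mmodsig)$, and the underlying $\QQ$-perverse sheaf (together with Proposition~\ref{prop:subfield} and Corollary~\ref{cor:fibers}) gives the $\QQb$-rationality of its image in $\Char(\Asig)$; the semisimple case is then handled by passing to simple subquotients, which remain defined over $\QQb$ by an argument with intermediate extensions. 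Your proposal verifies (a version of) the rationality condition only for $\sigma = \id$, which is strictly weaker and does not suffice. The de Rham--side rationality you work to establish is, in the correct argument, not needed at all.
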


We introduce some notation that will be used during the proof. For any field
automorphism $\sigma \in \Aut(\CC/\QQ)$, we obtain from $A$ a new complex abelian
variety $\Asig$. Likewise, an algebraic line bundle $(L, \nabla)$ with integrable
connection on $A$ gives rise to $(\Lsig, \nablasig)$ on $\Asig$, and so we have a
natural map
\[
	\csig \colon \Ash \to \Asigsh.
\]
Now recall the following notion, due in a slightly different form to Simpson, who
modeled it on Deligne's definition of absolute Hodge classes.

\begin{definition} \label{def:absolute}
A closed subset $Z \subseteq \Ash$ is said to be \define{absolute closed} if, for
every field automorphism $\sigma \in \Aut(\CC/\QQ)$, the set
\[
	F \bigl( \csig(Z) \bigr) \in \Char(\Asig)
\]
is closed and defined over $\QQb$.
\end{definition}

The following theorem about absolute closed subsets is also due to Simpson.

\begin{theorem}[Simpson]
An absolute closed subset of $\Ash$ is a finite union of arithmetic linear
subvarieties.
\end{theorem}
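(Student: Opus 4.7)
My plan is to combine Theorem~\ref{thm:simpson} with a Galois-theoretic argument that upgrades ``linear'' to ``arithmetic'' translation. First, setting $\sigma = \id$ in Definition~\ref{def:absolute}, we learn that $\Phi(Z)$ is a closed algebraic subset of $\Char(A)$. Since $Z$ itself is closed in $\Ash$, Theorem~\ref{thm:simpson} yields a decomposition
\[
	Z = \bigcup_{i=1}^n (L_i, \nabla_i) \tensor \im(\fsh_i)
\]
into finitely many linear subvarieties associated with surjections $f_i \colon A \to B_i$ with connected fibers. It therefore remains only to show that each translation $(L_i, \nabla_i)$ may be chosen to be a torsion point.

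The Galois group $\Aut(\CC/\QQ)$ permutes the (finitely many) components of $Z$, so after grouping into Galois orbits I may assume that $A$, $B_i$, and $f_i$ are all defined over a number field $K$, and that the subabelian variety $\im(\fsh_i) \subseteq \Ash$ is Galois-stable. Passing to the quotient $\Ash / \im(\fsh_i)$, which is again the moduli space of line bundles with integrable connection on a complementary abelian subvariety, the problem reduces to the following point-like statement: if an absolute closed subset consists of a single point $\{(L, \nabla)\} \subseteq \Ash$, then $(L, \nabla)$ is torsion.

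For this reduced statement, absolute closedness supplies, for every $\sigma \in \Aut(\CC/K)$, that $F(\csig(L,\nabla)) = \ker \nablasig$ lies in $\Char(A^\sigma)(\QQb)$. By feeding $\sigma$ back and forth between the two algebraic structures on the common underlying complex manifold, one extracts that $(L,\nabla)$ lies in $\Ash(\QQb)$ \emph{and} that $\Phi(L, \nabla)$ lies in $\Char(A)(\QQb)$. Simpson's Proposition~3.4, recalled in the note following Definition~\emph{linear subvariety of $\Char(A)$}, then identifies the points that are simultaneously algebraic in both algebraic structures with the torsion subgroup of $\Ash$, completing the proof.

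The main obstacle is this last step: the two algebraic structures on the complex manifold $\Ash \cong \Char(A)$ are entirely different, and the fact that their $\QQb$-points meet exactly in the torsion subgroup is not formal—it is genuinely the arithmetic heart of the matter and the reason one must invoke Simpson's theory. The reduction to the point-like case via Theorem~\ref{thm:simpson} is essentially structural, but the Galois-equivariance of the reduction (tracking components, quotienting by a Galois-stable subgroup, and extracting double algebraicity from the one-sided hypothesis of absolute closedness) requires care, and is the place where I would expect the technical bulk of the argument to reside.
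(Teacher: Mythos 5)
The paper does not actually prove this theorem: it is quoted from Simpson, and the ``proof'' given consists of the observation that Definition~\ref{def:absolute} is a priori weaker than Simpson's original definition of absolute closed sets, together with the remark that the proof of \cite{Simpson}*{Theorem~6.1} nevertheless goes through under these weaker hypotheses. Your first step --- applying Theorem~\ref{thm:simpson} with $\sigma = \id$ to decompose $Z$ into finitely many linear subvarieties, so that only the torsion property of the translates remains --- is indeed the skeleton of Simpson's argument, and is fine.

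The second half of your proposal, however, has a genuine gap: you ``may assume that $A$, $B_i$, and $f_i$ are all defined over a number field $K$.'' You may not. The abelian variety $A$ is fixed and arbitrary; grouping the components of $Z$ into Galois orbits rearranges subsets of $\Ash$ but does nothing to the field of definition of $A$ itself. This is fatal to the final step, which rests entirely on Simpson's Proposition~3.4: that proposition is stated (and proved, by a Gelfond--Schneider type transcendence argument) only for abelian varieties defined over a number field, and for general $A$ the scheme $\Ash$ carries no $\QQb$-structure, so the assertion that ``$(L,\nabla)$ lies in $\Ash(\QQb)$'' which you propose to extract is not even meaningful. Note moreover that Definition~\ref{def:absolute} constrains only the Betti side --- it says that $\Phi(\csig(Z))$ is defined over $\QQb$ in $\Char(\Asig)$ for \emph{every} $\sigma$ --- and hands you no de Rham algebraicity at all. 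The actual content of \cite{Simpson}*{Theorem~6.1}, which is what must be established here, is precisely that a character all of whose conjugates (in the de Rham sense, via $\csig$ followed by $\Phi$) have $\QQb$-valued monodromy is torsion; this arithmetic heart is neither proved nor correctly reduced to a citable statement in your write-up. (The reduction to the point case by passing to $\Ash/\im(\fsh_i)$ is plausible but also unchecked: one must verify that the quotient map intertwines the conjugations $\csig$ and the maps $\Phi$, and it produces a finite absolute closed set rather than a single point.) If you want a complete argument rather than a citation, the missing piece is Simpson's Theorem~6.1 itself, not Proposition~3.4.
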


\begin{proof}
Simpson's definition \cite{Simpson}*{p.~376} of absolute closed sets actually
contains several additional conditions (related to the space of Higgs bundles); but
as he explains, a strengthening of \cite{Simpson}*{Theorem~3.1}, added in proof,
makes these conditions unnecessary. In fact, the proof of
\cite{Simpson}*{Theorem~6.1} goes through unchanged with only the
assumptions in Definition~\ref{def:absolute}.
\end{proof}

With the help of Simpson's result, the proof of Theorem~\ref{thm:geom-origin} is
straightforward. We first establish the following lemma.

\begin{lemma} \label{lem:MHM}
Let $M \in \MHM(A)$ be a mixed Hodge module, with underlying filtered
$\Dmod_A$-module $(\Mmod, F)$. Then the cohomology support loci of the
perverse sheaf $\DR_A(\Mmod)$ are algebraic subsets of $\Char(A)$ that are defined
over $\QQb$.
\end{lemma}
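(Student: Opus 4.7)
The plan is to combine the $\QQ$-structure carried by any mixed Hodge module with the ``defined over $k$'' conclusion of Proposition~\ref{prop:subfield}. By Saito's construction, any $M \in \MHM(A)$ has as its underlying object a perverse sheaf of $\QQ$-vector spaces $E_\QQ \in \Dbc(\QQ_A)$ whose complexification is $\DR_A(\Mmod)$. Applying Proposition~\ref{prop:subfield} with $k = \QQ$, I would obtain a canonical object $C^\bullet \in \Dbcoh(\QQ \lbrack \Lambda \rbrack)$ whose base change along the flat ring extension $\QQ \lbrack \Lambda \rbrack \to \CC \lbrack \Lambda \rbrack = R$ is the ``Fourier--Mukai'' complex $\derR \pl \bigl( \DR_A(\Mmod) \tensor_\CC \shL_R \bigr)$ studied in \subsecref{subsec:constructible}.

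Since $\QQ \lbrack \Lambda \rbrack$ is a Laurent polynomial ring, hence regular and of finite global dimension, the complex $C^\bullet$ admits a quasi-isomorphic representative by a bounded complex of finitely generated free $\QQ \lbrack \Lambda \rbrack$-modules, whose differentials are matrices with entries in $\QQ \lbrack \Lambda \rbrack$. By Corollary~\ref{cor:fibers}, together with the fact that the formation of cohomology of a bounded free complex commutes with arbitrary base change,
\[
	S_m^k \bigl( \DR_A(\Mmod) \bigr) =
	\Menge{\rho \in \Char(A)}{\dim_\CC \HH^k \bigl( C^\bullet
		\tensor_{\QQ \lbrack \Lambda \rbrack} \CC \lbrack \Lambda \rbrack / \mmrho \bigr) \geq m}.
\]
For a bounded complex of finitely generated free modules, the condition $\dim \HH^k \geq m$ at a point $\rho$ translates into simultaneous upper bounds on the ranks of the differentials $d_{k-1}$ and $d_k$ evaluated at $\rho$, which is cut out by the vanishing of finitely many minors of these matrices. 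Since those minors are elements of $\QQ \lbrack \Lambda \rbrack$, the locus is algebraic and defined over $\QQ \subseteq \QQb$.

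There is essentially no serious obstacle: the technical content of the lemma is packaged inside Proposition~\ref{prop:subfield} and Corollary~\ref{cor:fibers}, and the only point to verify is the elementary fact that the cohomology jump loci of a perfect complex over a Noetherian ring are cut out by determinantal ideals generated by entries of the differentials. The rationality input, namely that $\DR_A(\Mmod)$ descends to a perverse sheaf of $\QQ$-vector spaces for $M \in \MHM(A)$, is built into Saito's definition of a mixed Hodge module, so no separate argument is required.
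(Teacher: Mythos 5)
Your proposal is correct and follows essentially the same route as the paper: use the $\QQ$-perverse sheaf $\rat M$ underlying the mixed Hodge module, apply Proposition~\ref{prop:subfield} with $k=\QQ$ to descend $\derR \pl \bigl( \DR_A(\Mmod) \tensor_{\CC} \shL_R \bigr)$ to $\Dbcoh\bigl(\QQ\lbrack\Lambda\rbrack\bigr)$, and conclude via Corollary~\ref{cor:fibers}. The only difference is that you spell out the final step (the determinantal description of cohomology jump loci of a bounded free complex), which the paper leaves implicit as ``follows easily''.
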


\begin{proof}
By definition, a mixed Hodge module has an underlying perverse sheaf $\rat M$ with
coefficients in $\QQ$, and $\DR_A(\Mmod) \simeq (\rat M) \tensor_{\QQ} \CC$. By
Proposition~\ref{prop:subfield}, it follows that $\derR \pl \bigl( \DR_A(\Mmod)
\tensor_{\CC} \shL_R \bigr) \in \Dbcoh(R)$ is obtained by extension of scalars from
an object of $\Dbcoh \bigl( \QQ \lbrack \Lambda \rbrack \bigr)$.
The assertion about cohomology support loci now follows easily from
Corollary~\ref{cor:fibers}.
\end{proof}

\begin{note}
The same result is true for any holonomic $\Dmod_A$-module with
$\QQb$-structure; that is to say, for any holonomic $\Dmod_A$-module whose de Rham
complex is the complexification of a perverse sheaf with coefficients in $\QQb$.
This is what Mochizuki calls a ``pre-Betti structure'' in \cite{Mochizuki}.
\end{note}

\begin{lemma} \label{lem:subquotient}
Let $E \in \Dbc(\QQb_A)$ be a perverse sheaf with coefficients in $\QQb$. Any
simple subquotient of $E \tensor_{\QQb} \CC$ is the complexification of a simple
subquotient of $E$.
\end{lemma}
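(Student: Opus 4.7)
The plan is to reduce the lemma to the statement that extension of scalars from $\QQb$ to $\CC$ preserves simplicity of perverse sheaves, and then to exploit the fact that $\QQb$ is algebraically closed.

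First I would recall that the category $\Perv(\QQb_A)$ of perverse sheaves with $\QQb$-coefficients on $A$ is a Noetherian and Artinian abelian category, so $E$ admits a finite Jordan--H\"older filtration $0 = E_0 \subset E_1 \subset \dotsb \subset E_n = E$ whose successive quotients $S_i = E_i / E_{i-1}$ are simple. Since $\CC$ is flat over $\QQb$, the functor $\argbl \tensor_{\QQb} \CC$ is exact, so the induced filtration on $E \tensor_{\QQb} \CC$ has successive quotients $S_i \tensor_{\QQb} \CC$. If each $S_i \tensor_{\QQb} \CC$ is a simple perverse sheaf over $\CC$, then by the uniqueness part of Jordan--H\"older, every simple subquotient of $E \tensor_{\QQb} \CC$ is isomorphic to some $S_i \tensor_{\QQb} \CC$, which is exactly what we want.

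It therefore suffices to prove that if $S \in \Perv(\QQb_A)$ is simple, then $S \tensor_{\QQb} \CC$ is simple in $\Perv(\CC_A)$. For this I would write $S = \IC(Z, \shL)$ for an irreducible closed subvariety $Z \subseteq A$ and a simple local system $\shL$ of $\QQb$-vector spaces on a smooth Zariski open of $Z$. Since extension of scalars commutes with $\ju$, $\jl$, and truncation, it commutes with the intermediate extension, so $S \tensor_{\QQb} \CC \simeq \IC(Z, \shL \tensor_{\QQb} \CC)$. Simplicity of the intersection cohomology complex is equivalent to simplicity of the local system, so the problem reduces to showing that $\shL \tensor_{\QQb} \CC$ remains simple.

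The hard (but short) part is this last step, and this is exactly where the assumption that the coefficients lie in $\QQb$ rather than in $\QQ$ pays off. Translating into representations, $\shL$ corresponds to a finite-dimensional irreducible representation $\rho \colon \pi_1(U) \to \GL_r(\QQb)$; its endomorphism algebra $\End_{\pi_1(U)}(\shL)$ is a finite-dimensional division algebra over the algebraically closed field $\QQb$, hence equals $\QQb$ itself by Schur's lemma. Thus $\rho$ is \emph{absolutely} irreducible, i.e.\ $\rho \tensor_{\QQb} \CC$ is irreducible, so $\shL \tensor_{\QQb} \CC$ is simple. This completes the argument; I expect no serious obstacle beyond making sure that the IC description and the commutation of $\IC$ with flat base change of coefficients are invoked correctly.
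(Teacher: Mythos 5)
Your proposal is correct and follows essentially the same route as the paper: reduce to showing that a simple perverse sheaf over $\QQb$ stays simple after extension of scalars, then use the classification of simple perverse sheaves as intermediate extensions and the fact that an irreducible representation over the algebraically closed field $\QQb$ remains irreducible over $\CC$. You merely spell out in more detail the Jordan--H\"older reduction and the Schur's-lemma justification of absolute irreducibility, both of which the paper leaves implicit.
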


\begin{proof}
We only have to show that if $E \in \Dbc(\QQb_A)$ is a simple perverse sheaf, then $E
\tensor_{\QQb} \CC \in \Dbc(\CC_A)$ is also simple. By the classification of simple
perverse sheaves, there is an irreducible locally closed subvariety $U \subseteq A$,
and an irreducible representation $\rho \colon \pi_1(U) \to \GL_n(\QQb)$, such that
$E$ is the intermediate extension of the local system associated to $\rho$. Since
$\QQb$ is algebraically closed, $\rho$ remains irreducible over $\CC$,
proving that $E \tensor_{\QQb} \CC$ is still simple.
\end{proof}

\begin{proof}[Proof of Theorem~\ref{thm:geom-origin}]
We first show that this holds when $\Mmod$ underlies a mixed Hodge module $M$
obtained by iterating the standard cohomological functors (but without taking
subquotients). Fix two integers $k, m$, and set $Z = S_m^k(\Mmod)$. In light of
Lemma~\ref{lem:MHM}, it suffices to prove that each set $\csig(Z)$ is equal to
$S_m^k(\Mmodsig)$ for some polarizable Hodge module $\Msig \in \MHM(\Asig)$. But
since $M$ is of geometric origin, this is obviously the case; indeed, we can obtain
$\Msig$ by simply applying $\sigma$ to the finitely many algebraic varieties and
morphisms involved in the construction of $M$. 

Now suppose that $\Mmod$ is an arbitrary semisimple regular holonomic
$\Dmod_A$-module of geometric origin. Then $\Mmod$ is a direct sum of simple
subquotients of $\Dmod_A$-modules underlying mixed Hodge modules of geometric origin.
By the same argument as before, it suffices to show that $\Mmodsig$ is defined over
$\QQb$ for any $\sigma \in \Aut(\CC/\QQ)$. Now the perverse sheaf $\DR_A(\Mmodsig)$
is again a direct sum of simple subquotients of perverse sheaves underlying mixed Hodge
modules; by Lemma~\ref{lem:subquotient}, it is therefore the complexification of a
perverse sheaf with coefficients in $\QQb$. We then conclude the proof as above.
\end{proof}

\subsection{Perverse coherent sheaves}
\label{subsec:perverse}

Let $X$ be a smooth complex algebraic variety. In this section, we recall the
construction of perverse t-structures on the bounded derived category $\Dbcoh(\OX)$
of algebraic coherent sheaves, following \cite{Kashiwara}.  For a (possibly
non-closed) point $x$ of the scheme $X$, we denote the residue field at the point by
$\kappa(x)$, the inclusion morphism by $i_x \colon \Spec \kappa(x) \into X$,
and the codimension of the closed subvariety $\overline{ \{x\} }$ by $\codim(x) =
\dim \shO_{X,x}$.

\begin{definition} 
A \define{supporting function} on $X$ is a function $p \colon X \to \ZZ$ from the
underlying topological space of the scheme $X$ to the set of integers, with the property
that $p(y) \geq p(x)$ whenever $y \in \overline{ \{x\} }$. 
\end{definition}
Given a supporting function, Kashiwara defines two families of subcategories
\begin{align*}
	\pDtcoh{p}{\leq k}(\OX) &= 
		\menge{E \in \Dbcoh(\OX)}%
			{\text{$\derL i_x^{\ast} E \in \Dtcoh{\leq k+p(x)} %
			\bigl( \kappa(x) \bigr)$ for all $x \in X$}}, \\
	\pDtcoh{p}{\geq k}(\OX) &= 
		\menge{E \in \Dbcoh(\OX)}%
			{\text{$\derR i_x^! E \in \Dtcoh{\geq k+p(x)} %
			\bigl( \kappa(x) \bigr)$ for all $x \in X$}}.
\end{align*}
The following fundamental result is proved in \cite{Kashiwara}*{Theorem~5.9} and,
based on an idea of Deligne, in \cite{AB}*{Theorem 3.10}.

\begin{theorem}[Kashiwara] \label{thm:kashiwara}
The above subcategories define a bounded t-structure on $\Dbcoh(\OX)$ if, and only
if, the supporting function has the property that 
\[	
	p(y) - p(x) \leq \codim(y) - \codim(x)
\]
for every pair of (possibly non-closed) points $x,y \in X$ with $y \in \overline{
\{x\} }$.
\end{theorem}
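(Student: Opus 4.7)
The plan is to verify the three axioms of a $t$-structure, with the inequality arising from examining carefully chosen test objects in the necessity direction, and being used to make a glueing argument work in the sufficiency direction. Throughout, the key interplay is between the functors $\derL i_x^{\ast}$ (which governs the $\pDtcoh{p}{\leq k}$ side) and $\derR i_x^!$ (which governs $\pDtcoh{p}{\geq k}$), mediated by local duality along specializations $y \in \overline{\{x\}}$.

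For the only-if direction, I would construct test objects whose behavior under the two functors forces the codimension inequality. Fix $x, y$ with $y \in \overline{\{x\}}$ and let $Z = \overline{\{x\}}$. The natural test is a shift of the structure sheaf (or dualizing complex) of $Z$: the stalk $\derL i_x^{\ast} \shO_Z$ is $\kappa(x)$ in degree zero, so $\shO_Z[-p(x)]$ sits in the expected place at $x$. Computing $\derR i_y^! \shO_Z$ by local cohomology on the local ring $\shO_{X,y}$, one finds its lowest nonvanishing cohomology in degree $\codim(y) - \codim(x)$. The requirement that the shifted object lie in $\pDtcoh{p}{\geq 0}$ at the point $y$ then translates directly into $p(y) - p(x) \leq \codim(y) - \codim(x)$.

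For the if direction, closure of $\pDtcoh{p}{\leq k}(\OX)$ under $[-1]$ (and dually for $\pDtcoh{p}{\geq k}(\OX)$ under $[1]$) is immediate, since $\derL i_x^{\ast}$ and $\derR i_x^!$ are triangulated. The vanishing $\Hom(E, F) = 0$ for $E \in \pDtcoh{p}{\leq 0}$ and $F \in \pDtcoh{p}{\geq 1}$ follows from a local-to-global Cousin-type spectral sequence whose $E_2$ terms pair $\derL i_x^{\ast} E$ (in degrees $\leq p(x)$) against $\derR i_x^! F$ (in degrees $\geq 1 + p(x)$); these ranges are disjoint at every $x$, so every contribution vanishes. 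The existence of the truncation triangle for an arbitrary $E \in \Dbcoh(\OX)$ is then proved by Noetherian induction on the support: pick a generic point $\xi$ of a component of maximal dimension in $\Supp E$, use the recollement between $\overline{\{\xi\}}$ and its open complement to perform a partial truncation built from local cohomology at $\xi$, and iterate.

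The main obstacle is this last truncation construction. A priori, the truncations built pointwise via local cohomology at each generic point live only in a pro-coherent (or ind-coherent) enlargement of $\Dbcoh(\OX)$; the inequality $p(y) - p(x) \leq \codim(y) - \codim(x)$ is exactly what forces the spectral sequence controlling the assembly to degenerate within bounded coherent degrees, so that the glued object is genuinely in $\Dbcoh(\OX)$. This is also where Deligne's simplification (exploited in \cite{AB}) improves on the original argument of Kashiwara, by packaging the glueing via the coherator functor and thereby avoiding ad hoc convergence estimates. Boundedness of the resulting $t$-structure then follows from the finite range of $p$ on the finitely many generic points of components of $\Supp E$ combined with the finite cohomological amplitude of $\derL i_x^{\ast}$ and $\derR i_x^!$.
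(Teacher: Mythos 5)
A preliminary remark: the paper does not prove this theorem at all --- it is quoted from \cite{Kashiwara}*{Theorem~5.9} and \cite{AB}*{Theorem~3.10} --- so your sketch must be measured against those sources rather than against an argument in the text. Your ``if'' direction has the right architecture and matches the Deligne--Arinkin--Bezrukavnikov strategy: stability under shift is formal, orthogonality follows from a Cousin-type filtration by codimension of support, and the entire difficulty is concentrated in showing that the truncation --- which exists for free in the quasi-coherent (or ind-coherent) enlargement, built from local cohomology and assembled by Noetherian induction on the support --- is again a \emph{bounded coherent} complex. That coherence step is exactly where monotonicity of both $p$ and of $\hat p(x)=\codim(x)-p(x)$ enters; you correctly locate it as the crux, and leaving it unproved is acceptable in a sketch.

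The ``only if'' direction, however, has a genuine gap. You place $\shO_Z[-p(x)]$, with $Z=\overline{\{x\}}$, in $\pDtcoh{p}{\leq 0}(\OX)$ and then invoke ``the requirement that the shifted object lie in $\pDtcoh{p}{\geq 0}$ at the point $y$.'' No axiom of a t-structure imposes such a requirement: an object of the left-hand subcategory need not lie in the right-hand one, so no contradiction is produced. Worse, your own Cousin argument shows that orthogonality (and, trivially, shift-stability) holds for an \emph{arbitrary} function $p$; hence when $p(y)-p(x)>\codim(y)-\codim(x)$ the only axiom that can fail is the existence of the truncation triangle inside $\Dbcoh(\OX)$. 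The necessity proof must therefore exhibit a coherent complex whose truncation is forced out of $\Dbcoh(\OX)$. For instance, on a smooth curve with $p=0$ at the generic point and $p=2$ at closed points, the object $\shO_X[-1]$ admits no truncation: any candidate $\tau^{\leq 0}$ would have vanishing generic stalk, hence be a torsion complex, and the costalk condition on the cone would force a finitely generated module to surject onto the injective hull of a residue field. This is a different, and more delicate, argument than testing membership of a single object. A secondary slip: the lowest nonvanishing cohomology of $\derR i_y^!\,\shO_Z$ sits in degree $\depth\shO_{Z,y}$, not $\codim(y)-\codim(x)$ (which is the \emph{top} nonvanishing degree); the two coincide only when $Z$ is Cohen--Macaulay at $y$, e.g.\ after reducing to immediate specializations.
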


For example, $p=0$ corresponds to the standard t-structure on
$\Dbcoh(\OX)$. An equivalent way of putting the condition in
Theorem~\ref{thm:kashiwara} is that the dual function $\hat{p}(x) = \codim(x) - p(x)$
should again be a supporting function. If that is the case, one has the identities
\begin{align*}
	\pDtcoh{\hat{p}}{\leq k}(\OX) &=
		 \derR \shHom \Bigl( \pDtcoh{p}{\geq -k}(\OX), \OX \Bigr) \\
	\pDtcoh{\hat{p}}{\geq k}(\OX) &=
		 \derR \shHom \Bigl( \pDtcoh{p}{\leq -k}(\OX), \OX \Bigr),
\end{align*}
which means that the duality functor $\derR \shHom(\argbl, \OX)$ exchanges the
two perverse t-structures defined by $p$ and $\hat{p}$. 

\begin{definition}
The heart of the t-structure defined by $p$ is denoted
\[
	\pCoh{p}(\OX) = \pDtcoh{p}{\leq 0}(\OX) \cap \pDtcoh{p}{\geq 0}(\OX),
\]
and is called the abelian category of \define{$p$-perverse coherent sheaves}. 
\end{definition}

We are interested in a special cases of Kashiwara's result, namely that the set of
objects $E \in \Dbcoh(\OX)$ with $\codim \Supp \shH^i(E) \geq 2i$ for all $i \geq 0$ is
part of a t-structure on $\Dbcoh(\OX)$. To formalize this idea, define a
function 
\[
	m \colon X \to \ZZ, \quad 
		m(x) = \left\lfloor \tfrac{1}{2} \codim(x) \right\rfloor.
\]
It is easily verified that both $m$ and the dual function
\[
	\hat{m} \colon X \to \ZZ, \quad
		\hat{m}(x) = \left\lceil \tfrac{1}{2} \codim(x) \right\rceil
\]
are supporting functions. As a consequence of Theorem~\ref{thm:kashiwara}, $m$ defines a
bounded t-structure on $\Dbcoh(\OX)$; objects of the heart $\mCoh(\OX)$ will be
called \define{$m$-perverse coherent sheaves}.

\begin{note}
We use this letter because $m$ and $\hat{m}$ are as close as one can get to ``middle
perversity''. There is of course no actual middle perversity for coherent sheaves,
because the equality $p = \hat{p}$ cannot hold unless $X$ is a point.
\end{note}

The next lemma follows easily from \cite{Kashiwara}*{Lemma~5.5}.
\begin{lemma} \label{lem:m-structure}
The perverse t-structures defined by $m$ and $\hat{m}$ satisfy
\begin{align*}
	\mDtcoh{\leq k}(\OX) &= \menge{E \in \Dbcoh(X)}%
		{\text{$\codim \Supp \shH^i(E) \geq 2(i-k)$ for all $i \in \ZZ$}} \\
	\pDtcoh{\hat{m}}{\leq k}(\OX) &= \menge{E \in \Dbcoh(X)}%
		{\text{$\codim \Supp \shH^i(E) \geq 2(i-k)-1$ for all $i \in \ZZ$}}.
\end{align*}
By duality, this also describes the subcategories with $\geq k$.
\end{lemma}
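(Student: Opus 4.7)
The argument is a direct application of Kashiwara's Lemma~5.5 in \cite{Kashiwara}, which reformulates membership in $\pDtcoh{p}{\leq k}(\OX)$ in terms of the supports of cohomology sheaves: for any supporting function $p$, one has $E \in \pDtcoh{p}{\leq k}(\OX)$ if and only if $p(x) \geq i - k$ for every integer $i$ and every $x \in \Supp \shH^i(E)$. This trades the derived-pullback condition in the definition for a much more tractable combinatorial condition, and is precisely what is needed here.

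Once this reformulation is in hand, the two identities reduce to elementary arithmetic with floors and ceilings. For $p = m$, the condition that $\lfloor \codim(x)/2 \rfloor \geq i-k$ for every $x \in \Supp \shH^i(E)$ is equivalent to the condition that $\codim(x) \geq 2(i-k)$ for every such $x$, simply because $\lfloor n/2 \rfloor \geq j$ iff $n \geq 2j$ for integers $j$. Passing to the infimum over $\Supp \shH^i(E)$, which by definition computes $\codim \Supp \shH^i(E)$, produces the required inequality $\codim \Supp \shH^i(E) \geq 2(i-k)$. The case $p = \hat{m}$ is identical in structure, invoking instead the elementary equivalence $\lceil n/2 \rceil \geq j$ iff $n \geq 2j - 1$, which yields $\codim \Supp \shH^i(E) \geq 2(i-k) - 1$.

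The final assertion about the subcategories with $\geq k$ is automatic once the $\leq k$ statements are established. Indeed, $\hat{m}$ is the dual supporting function of $m$ in the sense recalled immediately before Theorem~\ref{thm:kashiwara}, and the duality functor $\derR \shHom(\argbl, \OX)$ exchanges the t-structures for $m$ and $\hat{m}$. Applying the already-proved $\leq -k$ descriptions to the dual complex, and translating supports under duality, yields the $\geq k$ descriptions.

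The only substantive input is Kashiwara's lemma itself, so the proof as written is essentially a citation plus bookkeeping. If a self-contained argument were desired, the one place requiring care would be the passage in Kashiwara's lemma from the vanishing of $\shH^j (\derL i_x^{\ast} E)$ for $j > k + p(x)$ at every point $x$ to a condition purely on $\Supp \shH^i(E)$; this uses the compatibility of derived pullback with localization at generic points of supports, together with a standard Nakayama-type argument to detect the smallest $i$ with $\shH^i(E) \neq 0$ locally. For the present lemma, however, no such work is needed.
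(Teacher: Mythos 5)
Your proposal is correct and takes essentially the same route as the paper, whose entire proof is the remark that the lemma ``follows easily from Kashiwara's Lemma~5.5''; you have simply made explicit the floor/ceiling arithmetic and the duality step that the paper leaves to the reader.
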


Consequently, an object $E \in \Dbcoh(\OX)$ is an $m$-perverse coherent sheaf
precisely when $\codim \Supp \shH^i(E) \geq 2i$ and $\codim \Supp R^i \shHom(E, \OX)
\geq 2i-1$ for every integer $i \geq 0$. This shows one more time that the category
of $m$-perverse coherent sheaves is not preserved by the duality functor $\derR
\shHom(\argbl, \OX)$.

\begin{lemma} \label{lem:m-geq}
If $E \in \mDtcoh{\geq 0}(\OX)$, then $E \in \Dtcoh{\geq 0}(\OX)$.
\end{lemma}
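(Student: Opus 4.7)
The plan is to deduce the stated inclusion from the (very easy) opposite comparison of the $\tau^{\leq \bullet}$ halves of the two t-structures, combined with the $\Hom$-vanishing orthogonality that is built into every t-structure; no codimension analysis or spectral sequence is needed.

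First I would verify the inclusion $\Dtcoh{\leq -1}(\OX) \subseteq \mDtcoh{\leq -1}(\OX)$. Given $E \in \Dtcoh{\leq -1}(\OX)$, we have $\shH^i(E) = 0$ for every $i \geq 0$, so the codimension bound $\codim \Supp \shH^i(E) \geq 2(i+1)$ supplied by Lemma~\ref{lem:m-structure} is vacuous in that range; for $i \leq -1$ the right-hand side $2(i+1)$ is non-positive while any codimension is at least $0$, so the bound is trivial there as well. Hence $E$ lies in $\mDtcoh{\leq -1}(\OX)$.

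Next I would invoke orthogonality. For any t-structure on a triangulated category, the subcategory in non-negative degrees is the right orthogonal of the subcategory in degrees $\leq -1$. Applied to the $m$-perverse t-structure, $E \in \mDtcoh{\geq 0}(\OX)$ means $\Hom(G, E) = 0$ for every $G \in \mDtcoh{\leq -1}(\OX)$; by the first step, this in particular forces $\Hom(F, E) = 0$ for every $F \in \Dtcoh{\leq -1}(\OX)$, and applied to the standard t-structure this is exactly the characterization of $E \in \Dtcoh{\geq 0}(\OX)$.

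Because Lemma~\ref{lem:m-structure} has already been established, the argument is purely formal and there is no real obstacle; the only thing to keep track of is that both t-structures live on the common ambient triangulated category $\Dbcoh(\OX)$, which is what allows the $\Hom$-vanishing provided by one t-structure to be converted into a statement about the other. If one preferred a concrete proof bypassing orthogonality, one could instead pick a hypothetical smallest $b \geq 1$ with $\shH^{-b}(E) \neq 0$, a generic point $\eta$ of $\Supp \shH^{-b}(E)$, and use the hyperext spectral sequence computing $\derR i_\eta^! E$ to show that the socle term at bidegree $(p,q) = (0,-b)$ survives (there are no incoming differentials since $p<0$ is disallowed, and no outgoing ones by the minimality of $b$), contradicting $\derR i_\eta^! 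E \in \Dtcoh{\geq m(\eta)}(\kappa(\eta))$ since $-b < 0 \leq m(\eta)$.
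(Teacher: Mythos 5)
Your argument is correct, but it is not the route the paper takes. The paper's proof is the one-line observation that, by Kashiwara's definition of the co-aisle, $E \in \mDtcoh{\geq 0}(\OX)$ means $\derR i_x^! E \in \Dtcoh{\geq m(x)}(\kappa(x))$ for every point $x$, and since $m(x) \geq 0$ this immediately gives $\derR i_x^! E \in \Dtcoh{\geq 0}(\kappa(x))$, i.e.\ $E \in \pDtcoh{0}{\geq 0}(\OX) = \Dtcoh{\geq 0}(\OX)$. You instead compare the aisles: the inclusion $\Dtcoh{\leq -1}(\OX) \subseteq \mDtcoh{\leq -1}(\OX)$ (which you verify from Lemma~\ref{lem:m-structure}, though it also follows directly from $m \geq 0$ and the $\derL i_x^{\ast}$ description) and then invoke the standard fact that the co-aisle of a t-structure is the right orthogonal of the shifted aisle, so containment of aisles reverses for co-aisles. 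Both arguments are formal one-liners; yours has the merit of illustrating the general principle that $D_1^{\leq 0} \subseteq D_2^{\leq 0}$ forces $D_2^{\geq 0} \subseteq D_1^{\geq 0}$ for any two t-structures on the same category, while the paper's is more direct and avoids any appeal to orthogonality. Your alternative sketch via the spectral sequence for $\derR i_\eta^! E$ is closer in spirit to the paper's definition-based argument, but is unnecessary machinery here.
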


\begin{proof}
This is obvious from the fact that $m \geq 0$. 
\end{proof}

When it happens that both $E$ and $\derR \shHom(E, \OX)$ are $m$-perverse coherent
sheaves, $E$ has surprisingly good properties.

\begin{proposition} \label{prop:surprise}
If $E \in \mDtcoh{\leq 0}(\OX)$ satisfies $\derR \shHom(E, \OX) \in \mDtcoh{\leq
0}(\OX)$, then it has the following properties:
\begin{renumerate}
\item Both $E$ and $\derR \shHom(E, \OX)$ belong to $\mCoh(\OX)$.
\item Let $r \geq 0$ be the least integer with $\shH^r(E) \neq 0$; then $\codim
\Supp \shH^r(E) = 2r$.
\end{renumerate}
\end{proposition}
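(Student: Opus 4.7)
Let $F = \derR \shHom(E, \OX)$. The plan is to derive (i) from the duality and support-comparison features of the $m$- and $\hat m$-perverse t-structures, and then to prove (ii) by analysing the local-to-global Ext spectral sequence at a generic point of a top-codimension component of $\Supp \shH^r(E)$.

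For (i): the duality identities immediately before Proposition~\ref{prop:surprise} say that $\derR \shHom(\argbl, \OX)$ exchanges the $m$- and $\hat m$-perverse t-structures. From the hypothesis $F \in \mDtcoh{\leq 0}$ and biduality (valid in $\Dbcoh(\OX)$ on smooth $X$) one gets $E \simeq \derR \shHom(F, \OX) \in \pDtcoh{\hat m}{\geq 0}$. Pointwise $\hat m(x) \geq m(x)$, so the description of $\pDtcoh{p}{\geq 0}$ via $\derR i_x^!$ forces the inclusion $\pDtcoh{\hat m}{\geq 0} \subseteq \mDtcoh{\geq 0}$, and combined with $E \in \mDtcoh{\leq 0}$ this gives $E \in \mCoh(\OX)$; the argument for $F$ is symmetric. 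As a bonus, (i) together with Lemma~\ref{lem:m-geq} forces $\shH^i(E) = 0$ for $i < 0$, so the integer $r$ in (ii) is nonnegative.

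For (ii): write $c = \codim \Supp \shH^r(E)$. The inclusion $E \in \mDtcoh{\leq 0}$ gives $c \geq 2r$, and it remains to prove $c \leq 2r$. Consider the local-to-global Ext spectral sequence
\[
E_2^{p,q} = \shExt^p\bigl(\shH^{-q}(E), \OX\bigr) \Longrightarrow \shH^{p+q}(F),
\]
with differentials $d_k \colon E_k^{p,q} \to E_k^{p+k,\, q-k+1}$. Fix a generic point $\eta$ of a codimension-$c$ component of $\Supp \shH^r(E)$. Then $\shH^r(E)_\eta$ has finite length over the regular local ring $\shO_{X,\eta}$ of Krull dimension $c$, and a direct local-algebra computation shows that $\shExt^c(\shH^r(E), \OX)$ is nonzero at $\eta$. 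By minimality of $r$, every differential entering $E_k^{c,-r}$ originates in a term with $q > -r$, hence is zero; every differential leaving $E_k^{c,-r}$ lands in $\shExt^{c+k}(\shH^{r+k-1}(E), \OX)$, which by the standard estimate $\codim \Supp \shExt^p \geq p$ is supported in codimension $\geq c+k > c$ and so vanishes at $\eta$. Consequently $E_\infty^{c,-r}$ is nonzero at $\eta$, which forces $\shH^{c-r}(F)$ to be nonzero there, and hence $\codim \Supp \shH^{c-r}(F) \leq c$. Since $c - r \geq r \geq 0$, the hypothesis $F \in \mDtcoh{\leq 0}$ now yields $2(c-r) \leq c$, i.e., $c \leq 2r$, as desired.

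The main obstacle is the spectral sequence step in (ii): showing that $E_2^{c,-r}$ survives to $E_\infty$ and is genuinely nonzero at the generic point $\eta$ of a codimension-$c$ component of $\Supp \shH^r(E)$. This rests on two ingredients working together, namely the nonvanishing of $\shExt^c$ of a finite-length module on a regular local ring of dimension $c$, and the simultaneous vanishing at $\eta$ of every incoming and outgoing differential, which uses respectively the minimality of $r$ and the codimension estimate for $\shExt^p$. Everything else in the argument is formal manipulation with Kashiwara's perverse t-structures and biduality on the smooth variety $X$.
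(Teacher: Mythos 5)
Your argument is correct, and part (i) is essentially the paper's (the paper simply invokes Lemma~\ref{lem:m-structure}; your route through the duality identities and the pointwise inequality $\hat{m}\geq m$ is the same mechanism spelled out). For part (ii), however, you take a genuinely different route. The paper argues by contraposition: assuming $\codim \Supp \shH^r(E) > 2r$, it combines this with $\codim \Supp \shH^i(E)\geq 2i$ to get $\codim \Supp \shH^i(E) \geq i+r+1$ for all $i$, invokes Kashiwara's Proposition~4.3 to conclude $\derR\shHom(E,\OX)\in \Dtcoh{\geq r+1}(\OX)$, and then runs the identical step once more on the dual complex to force $E\in \Dtcoh{\geq r+1}(\OX)$, contradicting $\shH^r(E)\neq 0$. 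You instead work directly at the generic point $\eta$ of a codimension-$c$ component of $\Supp\shH^r(E)$ and use the spectral sequence $\shExt^p(\shH^{-q}(E),\OX)\Rightarrow \shH^{p+q}(\derR\shHom(E,\OX))$: the nonvanishing of $\shExt^c$ of a nonzero finite-length module over a regular local ring of dimension $c$, the minimality of $r$ (killing incoming differentials), and the estimate $\codim\Supp\shExt^p\geq p$ (killing outgoing differentials at $\eta$) together show that $\shH^{c-r}$ of the dual is supported at $\eta$, whence $2(c-r)\leq c$. Your argument is in effect an unpacking of the duality statement that the paper black-boxes into the citation of Kashiwara: it is longer and requires local duality for finite-length modules, but it is self-contained and pins down precisely which cohomology sheaf of the dual complex detects the minimal-codimension component, whereas the paper's version is shorter and purely formal once Proposition~4.3 is granted. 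Both proofs are complete.
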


\begin{proof}
The first assertion follows directly from Lemma~\ref{lem:m-structure}. To prove the
second assertion, note that we have $\codim \Supp \shH^r(E) \geq 2r$. It therefore
suffices to show that if $\shH^i(E) = 0$ for $i < r$, and $\codim \Supp \shH^r(E) >
2r$, then $\shH^r(E) = 0$. Under these assumptions, we have
\[
	\codim \Supp \shH^i(E) \geq \max(2i,2r+1) \geq i+r+1, 
\]
and therefore $\derR \shHom(E, \OX) \in \Dtcoh{\geq r+1}(\OX)$ by
\cite{Kashiwara}*{Proposition~4.3}. The same argument, applied to $\derR \shHom(E,
\OX)$, now shows that $E \in \Dtcoh{\geq r+1}(\OX)$.
\end{proof}

\subsection{Codimension bounds}
\label{subsec:codimension}

In this section, we show that the standard t-structure on $\Dbh(\Dmod_A)$
corresponds, under the Fourier-Mukai transform $\FM_A$, to the $m$-perverse t-structure.

\begin{theorem} \label{thm:t-structure}
Let $\Mmod \in \Dbh(\Dmod_A)$ be a holonomic complex on $A$. Then one has
\begin{align*}
	\Mmod \in \Dth{\leq k}(\Dmod_A) \quad &\Longleftrightarrow \quad 
		\FM_A(\Mmod) \in \mDtcoh{\leq k}(\OAsh), \\
	\Mmod \in \Dth{\geq k}(\Dmod_A) \quad &\Longleftrightarrow \quad 
		\FM_A(\Mmod) \in \mDtcoh{\geq k}(\OAsh).
\end{align*}
\end{theorem}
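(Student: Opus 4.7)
Since $\FM_A$ is an equivalence of triangulated categories (Laumon/Rothstein) and both t-structures are bounded, it suffices to establish the forward inclusions $\FM_A\bigl(\Dth{\leq k}(\Dmod_A)\bigr) \subseteq \mDtcoh{\leq k}(\OAsh)$ and $\FM_A\bigl(\Dth{\geq k}(\Dmod_A)\bigr) \subseteq \mDtcoh{\geq k}(\OAsh)$: the converses follow by taking cohomology in the long exact sequence associated to the standard truncation triangle of $\FM_A^{-1}(E)$. Both t-structures respect cohomological shifts under $\FM_A$, so I reduce to $k=0$; d\'evissage along the standard truncations further reduces to the case where $\Mmod$ is a single holonomic $\Dmod_A$-module, and I must show $\FM_A(\Mmod) \in \mCoh(\OAsh)$. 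By Lemma~\ref{lem:m-structure}, this amounts to two codimension inequalities: $\codim \Supp \shH^i \FM_A(\Mmod) \geq 2i$ and $\codim \Supp \shH^i \derR\shHom\bigl(\FM_A(\Mmod), \OAsh\bigr) \geq 2i-1$ for every $i$. Theorem~\ref{thm:Laumon}\ref{en:Laumon4} identifies the latter complex, up to translation, with $\FM_A(\DA \Mmod)$, where $\DA \Mmod$ is again single holonomic. Theorem~\ref{thm:h-linear} guarantees that the relevant support loci are finite unions of linear subvarieties of even codimension $2(\dim A - \dim B)$, so on the level of components the inequality $\geq 2i-1$ is automatically equivalent to $\geq 2i$. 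Using the base-change identity~\eqref{eq:base-change}, both conditions reduce to one: for every single holonomic $\Dmod_A$-module $\Mmod$ and every $k \geq 1$,
\[
    \codim S^k(\Mmod) \geq 2k.
\]

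I would prove this estimate by induction on $g = \dim A$, jointly for all single holonomic modules; the case $g=0$ is trivial. For the inductive step, Theorem~\ref{thm:h-linear} expresses each component of $S^k(\Mmod)$ as $Z = (L_0, \nabla_0) \tensor \im \fsh$ for a surjection $f \colon A \to B$ with connected fibers, with $\codim Z = 2r$ and $r = g - \dim B$; the goal is $r \geq k$. Replacing $\Mmod$ by $\Mmod \tensor (L_0, \nabla_0)^{-1}$, Theorem~\ref{thm:Laumon}\ref{en:Laumon2} together with base change translates the containment $Z \subseteq S^k(\Mmod)$ into $S^k(\fp \Mmod) = \Bsh$. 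When $r \geq 1$, so that $\dim B < g$, the inductive hypothesis applies to each cohomology sheaf $\shH^q \fp \Mmod$ (a single holonomic $\Dmod_B$-module), yielding generic vanishing $\HH^p\bigl(B, \DR_B(\shH^q \fp \Mmod \tensor (L', \nabla'))\bigr) = 0$ for $p \neq 0$ on a Zariski-dense open subset of $\Bsh$. The Leray-type spectral sequence
\[
    E_2^{p,q} = \HH^p\bigl(B, \DR_B(\shH^q \fp \Mmod \tensor (L', \nabla'))\bigr) \Rightarrow
    \HH^{p+q}\bigl(B, \DR_B(\fp \Mmod \tensor (L', \nabla'))\bigr)
\]
then degenerates generically to its $p=0$ row, forcing $\shH^k \fp \Mmod \neq 0$; since $\fp \Mmod \in \Dth{[-r, r]}(\Dmod_B)$, this yields $k \leq r$ as required.

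The main obstacle is the borderline case $r = 0$, where $Z = \Ash$ itself: the pushforward $\fp$ degenerates to the identity and the inductive hypothesis provides no information. To rule out $S^k(\Mmod) = \Ash$ for some $k \geq 1$, I would argue as follows. Once the codimension bound has been established for all proper linear subvarieties (which the inductive step above handles), both $\FM_A(\Mmod)$ and, via Theorem~\ref{thm:Laumon}\ref{en:Laumon4}, its Grothendieck dual $\FM_A(\DA \Mmod)$ have every cohomology sheaf of positive codimension satisfying the bound $\codim \geq 2i$; if the lowest nonvanishing cohomology degree $r_0$ of $\FM_A(\Mmod)$ were $\geq 1$ and supported on all of $\Ash$, Proposition~\ref{prop:surprise} would force $\codim \Supp \shH^{r_0} \FM_A(\Mmod) = 2 r_0 \geq 2$, a direct contradiction. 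Making this dichotomy close self-consistently inside the joint induction, and in particular verifying that Proposition~\ref{prop:surprise} can indeed be applied before the top-dimensional case is excluded, is the technical heart of the argument.
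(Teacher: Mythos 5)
Your overall architecture --- induction on $\dim A$, the structure theorem (Theorem~\ref{thm:h-linear}) for the loci $S^k(\Mmod)$, reduction via \eqref{eq:base-change} and Laumon's compatibilities to the direct image $\fp\bigl(\Mmod\tensor_{\OA}(L,\nabla)\bigr)$ on the lower-dimensional $B$ --- is the same as the paper's proof of Lemma~\ref{lem:t-structure1}, and your $r\geq 1$ step is sound (the paper phrases it through the $m$-perverse condition on $\FM_B(\fp(\cdots))$ rather than a hypercohomology spectral sequence, but this is equivalent bookkeeping). The gap is exactly where you flag it, and it does not ``close self-consistently'': you must exclude $S^k(\Mmod)=\Ash$ for some $k\geq 1$, i.e.\ show that $\shH^i\FM_A(\Mmod)$ has proper support for $i>0$ when $\Mmod$ is a single holonomic module, and Proposition~\ref{prop:surprise} cannot deliver this. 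Its hypothesis is $\FM_A(\Mmod)\in\mDtcoh{\leq 0}(\OAsh)$, i.e.\ $\codim\Supp\shH^i\geq 2i$ for \emph{all} $i$; in the scenario you are trying to rule out, some $\shH^{k'}$ with $k'\geq 1$ is supported on all of $\Ash$, so that hypothesis fails and no contradiction is produced. The top-dimensional case is an input to Proposition~\ref{prop:surprise}, not a consequence of it; the argument is circular.

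This missing case is the genuine geometric content of the theorem and cannot be recovered by formal manipulation of t-structures and duality (Verdier/Laumon duality only exchanges $S^k$ with $S^{-k}$ of $\DA\Mmod$, so nothing formally forbids $\shH^{k}$ and $\shH^{-k}$ from both being everywhere nonzero). The paper supplies the input as Proposition~\ref{prop:KW}: choose a good filtration $F_\bullet\Mmod$, form the Rees module $R_F\Mmod$ and its transform $\FMt_A(R_F\Mmod)$ over $E(A)$, and use $\CC^{\ast}$-equivariance to degenerate from $\lambda^{-1}(1)=\Ash$ to $\lambda^{-1}(0)$, where the transform becomes the classical Fourier--Mukai transform of the characteristic cycle $\shC(\Mmod,F)$. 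Holonomicity enters precisely there: $\Supp\shC(\Mmod,F)$ has pure dimension $g$, so $p_{23}$ is generically finite on it and the higher direct images have proper support. Without this step (or an equivalent substitute, such as the decomposition-theorem argument of Kr\"amer--Weissauer in the regular case), your induction cannot begin to exclude the codimension-zero components at any stage $g\geq 1$.
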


The first step of the proof consists in the following ``generic vanishing theorem''
for holonomic $\Dmod_A$-modules. In the regular case, this result
is due to Kr\"amer and Weissauer \cite{KW}*{Theorem~2}, whose proof relies on the
(difficult) recent solution of Kashiwara's conjecture for semisimple perverse sheaves. By
contrast, our proof is completely elementary.

\begin{proposition} \label{prop:KW}
Let $\Mmod$ be a holonomic $\Dmod_A$-module. Then for every $i > 0$, the
support of the coherent sheaf $\shH^i \FM_A(\Mmod)$ is a proper subset of $\Ash$.
\end{proposition}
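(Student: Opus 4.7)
The plan is to use the Rees-module Fourier--Mukai transform $\FMt_A$ of Definition~\ref{def:FM-R} to interpolate between $\FM_A(\Mmod)$ on $\Ash = \lambda^{-1}(1)$ and the Fourier--Mukai transform of the characteristic cycle on $\lambda^{-1}(0) = \Ah \times V$, where $V := H^0(A, \Omega_A^1)$. Concretely, I fix a good filtration $F_{\bullet} \Mmod$ by $\OA$-coherent subsheaves, form the Rees module $R_F \Mmod$, and set $\shF := \FMt_A(R_F \Mmod)$. By Proposition~\ref{prop:compatible}, $\shF$ is $\CC^{\ast}$-equivariant on $E(A)$, its derived restriction to $\Ash$ is $\FM_A(\Mmod)$, and its derived restriction to $\lambda^{-1}(0)$ is (up to a twist by the trivializable line bundle $\Omega_A^g$) the relative Fourier--Mukai transform in the $A$-direction of $\shC(\Mmod, F)$, a coherent sheaf on $T^{\ast} A = A \times V$ supported on the characteristic variety of $\Mmod$, which has dimension $g$ by holonomicity.

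I argue by contradiction: suppose $\Supp \shH^i \FM_A(\Mmod) = \Ash$ for some fixed $i > 0$. Because $\CC^{\ast}$ acts freely on $\lambda^{-1}(\CC^{\ast})$ with smooth quotient map $q \colon \lambda^{-1}(\CC^{\ast}) \to \Ash$, the $\CC^{\ast}$-equivariance of $\shF$ identifies $\shF|_{\lambda^{-1}(\CC^{\ast})}$ with $q^{\ast} \FM_A(\Mmod)$; by $t$-exactness of $q^{\ast}$, $\Supp \shH^i \shF \supseteq \lambda^{-1}(\CC^{\ast})$, so by closedness $\Supp \shH^i \shF = E(A)$. Nakayama's lemma, applied at each point of $\lambda^{-1}(0)$, shows that the coherent quotient $\shH^i \shF / \lambda \cdot \shH^i \shF$ has support all of $\lambda^{-1}(0)$, and the distinguished triangle $\shF \xrightarrow{\lambda} \shF \to \shF \Ltensor \shO_{\lambda^{-1}(0)}$ produces an injection $\shH^i \shF / \lambda \shH^i \shF \hookrightarrow \shH^i \bigl( \shF \Ltensor \shO_{\lambda^{-1}(0)} \bigr)$. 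Therefore $\shH^i$ of the derived restriction of $\shF$ to $\lambda^{-1}(0)$ also has full support on $\Ah \times V$.

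The contradiction will come from showing that this last cohomology sheaf has \emph{proper} support whenever $i > 0$. Consider the projection $p_V \colon \Char(\Mmod) \to V$. If $p_V$ is not dominant, then $\shC(\Mmod, F)$ is supported in $A \times V_0$ for some proper closed $V_0 \subsetneq V$, and the relative Fourier--Mukai transform inherits support contained in $\Ah \times V_0$. Otherwise $p_V$ is generically finite, since $\dim \Char(\Mmod) = \dim V = g$, and generic flatness provides a dense open $U \subseteq V$ over which $\shC(\Mmod, F)|_{A \times U}$ is $U$-flat. For any $v \in U$, the fiber $\shC(\Mmod, F)|_{A \times \{v\}}$ is a finite-length coherent sheaf on $A$, whose classical Fourier--Mukai transform is a locally free $\shO_{\Ah}$-module concentrated in degree $0$; by base change, the derived restriction of $\shF$ to $\lambda^{-1}(0)$ is a locally free sheaf in degree $0$ above $U$. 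Hence its $\shH^i$ vanishes on the Zariski-dense open $\Ah \times U$ for every $i \neq 0$, giving the desired proper support.

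The main technical obstacle is justifying the base-change step in the generically-finite case: one must verify that the ordinary restriction of $\shC(\Mmod, F)$ to a fiber $A \times \{v\}$ with $v \in U$ agrees with the derived one, and that the formation of the relative Fourier--Mukai transform commutes with this specialization. Both are routine consequences of $U$-flatness of $\shC(\Mmod, F)|_{A \times U}$ and of standard base change for the classical Fourier--Mukai transform on abelian varieties.
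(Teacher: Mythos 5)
Your proof is correct and follows essentially the same route as the paper's: interpolate via $\FMt_A(R_F\Mmod)$ on $E(A)$, use $\CC^{\ast}$-equivariance to transfer the question to $\lambda^{-1}(0)$, and exploit holonomicity ($\dim \shC(\Mmod,F) = g$) to make the projection to $\Ah \times H^0(A,\Omega_A^1)$ generically finite on the relevant support, killing the higher direct images generically. The only difference is presentational: you spell out, via Nakayama and the triangle for multiplication by $\lambda$, the specialization step that the paper compresses into ``it is therefore sufficient to prove the assertion on $\lambda^{-1}(0)$.''
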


\begin{proof}
Let $F_{\bullet} \Mmod$ be a good filtration by $\OA$-coherent subsheaves; this
exists by \cite{HTT}*{Theorem~2.3.1}. As in \subsecref{subsec:compatible}, we consider the
associated coherent $\Rmod_A$-module $R_F \Mmod$ defined by the Rees construction,
and its Fourier-Mukai transform
\[
	\FMt_A(R_F \Mmod) \in \Db \bigl( \shO_{E(A)} \bigr).
\]
By Proposition~\ref{prop:compatible}, $\FMt_A(R_F \Mmod)$ is equivariant for the
$\CC^{\ast}$-action on $E(A)$, and its restriction to $\lambda^{-1}(1) = \Ash$ is
isomorphic to $\FM_A(\Mmod)$. It is therefore sufficient to prove that the
restriction of $\FMt_A(R_F \Mmod)$ to $\lambda^{-1}(0) = A \times H^0(A, \Omega_A^1)$
has the asserted property. By Proposition~\ref{prop:compatible}, this restriction is
isomorphic to 
\begin{equation} \label{eq:formula}
	\derR (p_{23})_{\ast} \Bigl( p_{12}^{\ast} P \tensor 
		p_{13}^{\ast} \shC(\Mmod, F) \tensor p_1^{\ast} \Omega_A^g \Bigr),
\end{equation}
where the notation is as in the following diagram:
\begin{diagram*}{2.5em}{2.5em}
\matrix[math] (m) {
	A \times H^0(A, \Omega_A^1) & A \times \Ah \times H^0(A, \Omega_A^1)
		& \Ah \times H^0(A, \Omega_A^1) \\
	& A \times \Ah \\
}; 
\path[to] (m-1-2) edge node[above] {$p_{13}$} (m-1-1)
	(m-1-2) edge node[auto] {$p_{12}$} (m-2-2)
	(m-1-2) edge node[auto] {$p_{23}$} (m-1-3);
\end{diagram*}
But $\Mmod$ is holonomic, and so each irreducible component of the support of
$\shC(\Mmod, F)$ has dimension $g$. Thus the restriction of $p_{23}$
to the support of $p_{13}^{\ast} \shC(\Mmod, F)$ is generically finite over $\Ah
\times H^0(A, \Omega_A^1)$, which implies that the support of the higher direct image
sheaves in \eqref{eq:formula} is a proper subset of $\Ah \times H^0(A, \Omega_A^1)$.
\end{proof}

Together with the structure theory for cohomology support loci and basic properties
of the Fourier-Mukai transform, this result now allows us to prove the first
equivalence asserted in Theorem~\ref{thm:t-structure}.

\begin{lemma} \label{lem:t-structure1}
For any $\Mmod \in \Dth{\leq k}(\Dmod_A)$, one has $\FM_A(\Mmod) \in \mDtcoh{\leq k}(\OAsh)$.
\end{lemma}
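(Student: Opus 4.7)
The plan is to reduce, by a spectral-sequence argument, to the case of a single holonomic $\Dmod_A$-module, and then to induct on $\dim A$ using Theorem~\ref{thm:h-linear} on cohomology support loci together with Theorem~\ref{thm:Laumon}\ref{en:Laumon2} (Laumon's compatibility of $\FM_A$ with direct image).

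For the reduction, the standard truncation filtration on $\Mmod \in \Dth{\leq k}(\Dmod_A)$ yields a convergent spectral sequence $E_2^{p,q} = \shH^p \FM_A(\shH^q \Mmod) \Rightarrow \shH^{p+q} \FM_A(\Mmod)$ with $\shH^q \Mmod = 0$ for $q > k$. Granting the single-module case, each $E_2^{p,q}$ is supported in codimension $\geq 2p$, and only terms with $p \geq i-k$ contribute to $\shH^i \FM_A(\Mmod)$; Lemma~\ref{lem:m-structure} then delivers the conclusion. It therefore remains to prove: for a holonomic $\Dmod_A$-module $\Mmod$, $\codim \Supp \shH^i \FM_A(\Mmod) \geq 2i$ for all $i \geq 1$. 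By \eqref{eq:base-change} one has $\Supp \shH^i \FM_A(\Mmod) \subseteq \bigcup_{j \geq i} S^j(\Mmod)$, and Theorem~\ref{thm:h-linear} writes each $S^j(\Mmod)$ as a finite union of linear subvarieties. Thus it suffices to show that every linear subvariety $Z = (L,\nabla) \otimes \im(\fsh) \subseteq S^j(\Mmod)$, associated to $f \colon A \to B$ of relative dimension $r$, satisfies $r \geq j$, since $\codim Z = 2r$. Using Theorem~\ref{thm:Laumon}\ref{en:Laumon1}, after replacing $\Mmod$ by $\Mmod \otimes (L,\nabla)^{-1}$ I may assume $(L,\nabla)$ is trivial.

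I induct on $\dim A$. If $B$ is a single point, then $r = \dim A$ and the bound $j \leq r$ follows from the fact that $\DR_A \Mmod$ is concentrated in degrees $[-\dim A, 0]$, so $S^j(\Mmod) = \emptyset$ for $j > \dim A$. If $B = A$, i.e.\ $f$ is an isomorphism, then $\im(\fsh) = \Ash$ forces $S^j(\Mmod) = \Ash$ for some $j \geq 1$, contradicting Proposition~\ref{prop:KW} via \eqref{eq:base-change}. In the remaining case $0 < \dim B < \dim A$, Theorem~\ref{thm:Laumon}\ref{en:Laumon2} supplies $\derL(\fsh)^{\ast} \FM_A(\Mmod) \simeq \FM_B(\fp \Mmod)$, and base change translates $\im(\fsh) \subseteq S^j(\Mmod)$ into the statement that $\shH^j \FM_B(\fp \Mmod)$ has full support on $\Bsh$. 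The induction hypothesis applied to each holonomic $\Dmod_B$-module $\shH^q \fp \Mmod$ gives that the terms $E_2^{p,q} = \shH^p \FM_B(\shH^q \fp \Mmod)$ of the hypercohomology spectral sequence are supported in codimension $\geq 2p$, hence vanish at the generic point of $\Bsh$ for $p \geq 1$; the generic contribution to $\shH^j \FM_B(\fp \Mmod)$ must therefore come from $E_2^{0,j}$, forcing $\shH^j \fp \Mmod \neq 0$. The cohomological amplitude $[-r, r]$ of the direct image functor $\fp$ for smooth projective morphisms of relative dimension $r$ then yields $j \leq r$, completing the induction.

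The main technical obstacle is the passage from the set-theoretic inclusion $\im(\fsh) \subseteq S^j(\Mmod)$ to the sheaf-theoretic statement that $\shH^j \FM_B(\fp \Mmod)$ has full support on $\Bsh$; this requires the base-change identity extracted from Theorem~\ref{thm:Laumon}\ref{en:Laumon2} and a careful generic-point analysis of the hypercohomology spectral sequence. Once those are in place, the cohomological amplitude of $\fp$ and the structural reductions are routine.
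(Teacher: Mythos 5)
Your argument is essentially the paper's: induction on $\dim A$, the structure theorem (Theorem~\ref{thm:h-linear}) together with Proposition~\ref{prop:KW} to identify the relevant loci as linear subvarieties of positive codimension, Laumon's identity $\derL(\fsh)^{\ast}\circ\FM_A = \FM_B\circ\fp$, and the amplitude bound for $\fp$. The paper skips your preliminary reduction to single modules by running the induction directly on complexes, and instead of your generic-point analysis it restricts $\FM_A(\Mmod)$ to an irreducible component $Z$ of $\Supp \shH^{\ell}\FM_A(\Mmod)$ (arranged via \eqref{eq:base-change} to also be a component of $S^{\ell}(\Mmod)$) and reads off $\ell \leq r$ from the inductive bound $\mDtcoh{\leq r}(\shO_Z)$; these differences are cosmetic.

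Two steps need small repairs. First, in the inductive step you assert that the generic contribution to $\shH^j\FM_B(\fp\Mmod)$ must come from $E_2^{0,j}$. Your induction hypothesis only kills the terms $E_2^{p,q}$ with $p\geq 1$ at the generic point of $\Bsh$; the vanishing of $\shH^p\FM_B(\Nmod)$ for $p<0$ is \emph{not} available at this stage (in the paper it is deduced only later, from the $\geq$ half of Theorem~\ref{thm:t-structure} via duality and Lemma~\ref{lem:m-geq}). This is harmless: if instead some $E_2^{p,j-p}$ with $p\leq 0$ is generically nonzero, then $\shH^{j-p}\fp\Mmod\neq 0$ with $j-p\geq j$, and the amplitude bound still gives $j\leq j-p\leq r$. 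Second, in the case where $B$ is a point, the vanishing $S^j(\Mmod)=\emptyset$ for $j>\dim A$ does not follow from the degree concentration of $\DR_A\Mmod$ in $[-\dim A,0]$ alone (that only bounds $j$ by $2\dim A$; the constant sheaf in degree $0$ already shows this). You need either the perversity of $\DR_A\bigl(\Mmod\tensor_{\OA}(L,\nabla)\bigr)$ and the resulting amplitude of its pushforward to a point, or, more simply, the same coherent amplitude argument you use in the general case applied to $\fp\Mmod$ for $f\colon A\to\pt$.
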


\begin{proof}
The proof is by induction on $\dim A$, the statement being obviously true when $A$ is
a point. Since $\FM_A$ is triangulated, it suffices to prove the statement for $k=0$.
According to Lemma~\ref{lem:m-structure}, what we then need to show is the 
following: for any holonomic complex $\Mmod \in \Dth{\leq 0}(\Dmod_A)$ concentrated in
nonpositive degrees, the Fourier-Mukai transform $\FM_A(\Mmod)$ satisfies, for every
$\ell \geq 1$, the inequality
\[
	\codim \Supp \shH^{\ell} \FM_A(\Mmod) \geq 2 \ell.
\]
Let $Z$ be any irreducible component of the support of $\shH^{\ell} \FM_A(\Mmod)$,
for some $\ell \geq 1$. By \eqref{eq:base-change} and descending induction on $\ell$,
we may assume that $Z$ is also an irreducible component of $S^{\ell}(\Mmod)$;
according to Theorem~\ref{thm:h-linear}, $Z$ is therefore a linear
subvariety of $\Ash$, and hence of the form $Z = t_{(L, \nabla)}(\im \fsh)$ for a
surjective morphism $f \colon A \to B$ and a suitable point $(L, \nabla) \in \Ash$.
Furthermore, Proposition~\ref{prop:KW} shows that $\codim Z > 0$, and therefore $\dim B
< \dim A$. Setting $r = \dim A - \dim B > 0$, we thus have $\codim Z = 2r$.

Using the properties of the Fourier-Mukai transform listed in
Theorem~\ref{thm:Laumon}, we find that the pullback of $\FM_A(\Mmod)$ to the
subvariety $Z$ is isomorphic to
\[
	\derL (\fsh)^{\ast} \derL t_{(L, \nabla)}^{\ast} \FM_A(\Mmod) \\
		\simeq \FM_B \Bigl( \fp \bigl( \Mmod \tensor_{\OA} (L, \nabla) \bigr) \Bigr)
		\in \Dbcoh(\OBsh).
\]
From the definition of the direct image functor $\fp$, it is clear that $\fp \bigl(
\Mmod \tensor_{\OA} (L, \nabla) \bigr)$ belongs to the subcategory $\Dth{\leq
r}(\Dmod_B)$. The inductive assumption now allows us to
conclude that the restriction of $\FM_A(\Mmod)$ to $Z$ lies in the subcategory $\mDtcoh{\leq
r}(\shO_Z)$. But $Z$ is an irreducible component of $\Supp \shH^{\ell}
\FM_A(\Mmod)$; it follows that $\ell \leq r$, and consequently $\codim Z \geq 2\ell$,
as asserted.
\end{proof}

\begin{lemma} \label{lem:t-structure2}
Let $\Mmod \in \Dbh(\Dmod_A)$ be a holonomic complex. 
If its Fourier-Mukai transform satisfies $\FM_A(\Mmod) \in \Dtcoh{\leq k}(\OAsh)$,
then $\Mmod \in \Dth{\leq k}(\Dmod_A)$.
\end{lemma}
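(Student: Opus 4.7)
The plan is to combine Lemma~\ref{lem:t-structure1} with duality and the standard truncation triangle in order to pinch the Fourier-Mukai transform of the ``positive part'' of $\Mmod$ from both sides.

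First, I would establish the dual of Lemma~\ref{lem:t-structure1}: for any $\Nmod \in \Dth{\geq k+1}(\Dmod_A)$, we have $\FM_A(\Nmod) \in \mDtcoh{\geq k+1}(\OAsh)$. Indeed, the $\Dmod$-module duality $\DA$ exchanges $\Dth{\leq j}$ with $\Dth{\geq -j}$, so $\DA(\Nmod) \in \Dth{\leq -(k+1)}(\Dmod_A)$; Lemma~\ref{lem:t-structure1} then gives $\FM_A(\DA(\Nmod)) \in \mDtcoh{\leq -(k+1)}(\OAsh)$. By Theorem~\ref{thm:Laumon}\ref{en:Laumon4} this complex is isomorphic to $\langle -1_{\Ash}\rangle^{\ast} \derR\shHom(\FM_A(\Nmod), \OAsh)$, and since $\langle -1_{\Ash}\rangle^{\ast}$ is an automorphism of $\Ash$ it preserves the $m$-perverse t-structure, so $\derR\shHom(\FM_A(\Nmod), \OAsh) \in \mDtcoh{\leq -(k+1)}(\OAsh)$. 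Applying $\derR\shHom(\argbl, \OAsh)$ once more --- an involution on $\Dbcoh(\OAsh)$ because $\Ash$ is smooth --- and invoking the exchange $\derR\shHom(\mDtcoh{\leq -j}, \OAsh) = \pDtcoh{\hat{m}}{\geq j}$ from \subsecref{subsec:perverse} yields $\FM_A(\Nmod) \in \pDtcoh{\hat{m}}{\geq k+1}(\OAsh)$, which is contained in $\mDtcoh{\geq k+1}(\OAsh)$ since $\hat{m} \geq m$ pointwise.

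With both halves in place, the proof finishes by a truncation argument. The inclusion $\Dtcoh{\leq k}(\OAsh) \subseteq \mDtcoh{\leq k}(\OAsh)$ is immediate from the definitions, so the hypothesis of the lemma gives $\FM_A(\Mmod) \in \mDtcoh{\leq k}(\OAsh)$. Form the standard truncation triangle
\[
	\tau^{\leq k}\Mmod \to \Mmod \to \Nmod \xrightarrow{+1}
\]
in $\Dbh(\Dmod_A)$, where $\Nmod = \tau^{>k}\Mmod \in \Dth{\geq k+1}(\Dmod_A)$. Lemma~\ref{lem:t-structure1} puts $\FM_A(\tau^{\leq k}\Mmod)$ in $\mDtcoh{\leq k}(\OAsh)$, so by the long exact sequence of $m$-perverse cohomology the same holds for $\FM_A(\Nmod)$. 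Combined with the dual bound of the previous paragraph, $\FM_A(\Nmod)$ lies in $\mDtcoh{\leq k}(\OAsh) \cap \mDtcoh{\geq k+1}(\OAsh) = 0$; since $\FM_A$ is Laumon's equivalence this forces $\Nmod = 0$, hence $\Mmod \in \Dth{\leq k}(\Dmod_A)$.

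The main obstacle is the careful bookkeeping involved in deriving the dual statement, where one must interlace $\Dmod$-module duality (via Theorem~\ref{thm:Laumon}\ref{en:Laumon4}) with Grothendieck duality on $\Ash$ and correctly interchange the mutually dual perversities $m$ and $\hat{m}$ introduced in \subsecref{subsec:perverse}. Once this dualization is executed the pinching argument is purely formal; as a byproduct, the proof in fact uses only the weaker hypothesis $\FM_A(\Mmod) \in \mDtcoh{\leq k}(\OAsh)$, which is precisely the nontrivial implication of Theorem~\ref{thm:t-structure}.
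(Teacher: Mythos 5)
Your proof is correct, but it takes a genuinely different route from the one in the paper. The paper argues by inverting the transform: it reconstructs $\Mmod$ from $\FM_A(\Mmod)$ via Laumon's inversion formula, pushes everything down along the affine projection $\pi \colon \Ash \to \Ah$, and then bounds the cohomological amplitude of $\derR (p_1)_{\ast}$ using the fact (from \eqref{eq:base-change} and Theorem~\ref{thm:h-linear}) that each $\Supp \shH^{\ell} \FM_A(\Mmod)$ lies in a union of \emph{linear} subvarieties of codimension at least $2\ell$, which lose only half their codimension under $\pi$. You avoid the inversion formula entirely: you dualize Lemma~\ref{lem:t-structure1} through Theorem~\ref{thm:Laumon}~\ref{en:Laumon4} and the $m$/$\hat{m}$ exchange of \subsecref{subsec:perverse} to obtain the complementary bound $\FM_A \bigl( \Dth{\geq k+1}(\Dmod_A) \bigr) \subseteq \mDtcoh{\geq k+1}(\OAsh)$, and then pinch $\FM_A(\tau^{>k}\Mmod)$ to zero using that $\FM_A$ is an equivalence. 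The individual steps all check out: the inclusion $\pDtcoh{\hat{m}}{\geq k+1}(\OAsh) \subseteq \mDtcoh{\geq k+1}(\OAsh)$ because $\hat{m} \geq m$, the stability of $\mDtcoh{\leq k}$ under extension in the rotated triangle, the preservation of the $m$-perversity by the automorphism $\langle -1_{\Ash} \rangle^{\ast}$, and biduality on the smooth variety $\Ash$; and there is no circularity, since Lemma~\ref{lem:t-structure1} is proved independently by induction on $\dim A$. What your route buys: it is purely formal once Lemma~\ref{lem:t-structure1} and the duality compatibility are available, and, as you note, it proves the statement under the weaker hypothesis $\FM_A(\Mmod) \in \mDtcoh{\leq k}(\OAsh)$ --- which is exactly what the backward implication of Theorem~\ref{thm:t-structure} requires, and which is also the hypothesis the paper's own argument actually uses (its codimension bounds $\codim \Supp \shH^{\ell} \geq 2\ell$ are precisely the $m$-perverse condition). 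What the paper's route buys: it stays at the level of explicit coherent sheaves on $\Ah$, needs neither the $\hat{m}$-perversity nor Grothendieck duality on $\Ash$, and shows concretely how the degree bound on $\Mmod$ emerges from the geometry of linear subvarieties under the projection $\pi$.
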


\begin{proof}
It suffices to prove this for $k=0$. By \cite{Laumon}*{Th\'eor\`eme~3.2.1}, we 
can recover $\Mmod$ (up to canonical isomorphism) from its Fourier-Mukai transform as
\[
	\Mmod = \langle -1_A \rangle^{\ast} \derR (p_1)_{\ast} \Bigl(
		\Psh \tensor_{\shO_{A \times \Ash}} p_2^{\ast} \FM_A(\Mmod) \Bigr) \decal{g},
\]
where $p_1 \colon A \times \Ash \to A$ and $p_2 \colon A \times \Ash \to \Ash$ are
the two projections. If we forget about the $\Dmod_A$-module structure and only
consider $\Mmod$ as a complex of quasi-coherent sheaves of $\OA$-modules, we can use
the fact that $\pi \colon \Ash \to A$ is affine to obtain
\[
	\Mmod = \langle -1_A \rangle^{\ast} \derR (p_1)_{\ast} \Bigl(
		 P \tensor_{\shO_{A \times \Ah}} p_2^{\ast} \, \pil \FM_A(\Mmod) \Bigr) \decal{g},
\]
where now $p_1 \colon A \times \Ah \to A$ and $p_2 \colon A \times \Ah \to \Ah$.
By virtue of \eqref{eq:base-change} and Theorem~\ref{thm:h-linear}, each irreducible
component of $\Supp \shH^{\ell} \FM_A(\Mmod)$ is contained in a linear subvariety of
codimension at least $2\ell$; consequently, each irreducible component of $\Supp \pil
\shH^{\ell} \FM_A(\Mmod)$ still has codimension at least $\ell$. From this, it is
easy to see that $\shH^i \Mmod = 0$ for $i > 0$, and hence that $\Mmod \in \Dth{\leq
0}(\Dmod_A)$.
\end{proof}

\begin{proof}[Proof of Theorem~\ref{thm:t-structure}]
The first equivalence is proved in Lemma~\ref{lem:t-structure1} and
Lemma \ref{lem:t-structure2} above. The second equivalence follows from this by duality,
using the compatibility of the Fourier-Mukai transform with the duality functors for
$\Dmod_A$-modules and $\shO_{\Ash}$-modules (see Theorem~\ref{thm:Laumon}).
\end{proof}

\subsection{Proofs for constructible complexes}
\label{subsec:proofs}

For the convenience of the reader, we collect in this section the proofs for the
results announced in \subsecref{subsec:results-c}. We begin with the structure of
the cohomology support loci
\[
	S_m^k(E) = \Menge{\rho \in \Char(A)}{\dim \HH^k \bigl( A, E \tensor_{\CC}
		\CCrho \bigr) \geq m}
\]
of a constructible complex $E \in \Dbc(\CC_A)$.

\begin{proof}[Proof of Theorem~\ref{thm:c-linear}]
To prove \ref{en:c-linear-a}, we use the Riemann-Hilbert correspondence to find
a regular holonomic complex $\Mmod \in \Dbrh(\Dmod_A)$ such that $\DR_A(\Mmod) \simeq
E$. Since $S_m^k(E) = \Phi \bigl( S_m^k(\Mmod) \bigr)$ by
Lemma~\ref{lem:relationship}, the assertion follows from Theorem~\ref{thm:h-linear}.
The statement in \ref{en:c-linear-b} may be deduced from Theorem~\ref{thm:geom-origin} by a
similar argument.
\end{proof}

Next comes the description of the perverse t-structure on $\Dbc(\CC_A)$ in terms of the
codimension of the loci $S^k(E) = S_1^k(E)$. 

\begin{proof}[Proof of Theorem~\ref{thm:c-t}]
Let $\Mmod \in \Dbrh(\Dmod_A)$ be a regular holonomic complex such that $\DR_A(\Mmod)
\simeq E$. Since $\Phi \bigl( S^k(\Mmod) \bigr) = S^k(E)$, the assertion in
\ref{en:c-t-1} is a consequence of Theorem~\ref{thm:t-structure}. To deduce 
\ref{en:c-t-2}, let $\DA \colon \Dbc(\CC_A) \to \Dbc(\CC_A)$ be the
Verdier duality functor. We then have
\[
	S_m^k(E) = \langle -1_{\Char(A)} \rangle \, S_m^{-k}(\DA E)
\]
by Verdier duality. Since $E \in \piDtc{\geq 0}(\CC_A)$ iff $\DA E \in \piDtc{\leq
0}(\CC_A)$, the assertion now follows from \ref{en:c-t-1}. Finally,
\ref{en:c-t-3} is clear from the definition of perverse sheaves as the
heart of the perverse t-structure on $\Dbc(\CC_A)$.
\end{proof}

Lastly, we give the proof of the structure theorem for simple perverse sheaves with
Euler characteristic equal to zero.

\begin{proof}[Proof of Theorem~\ref{thm:c-simple}]
This again follows from the Riemann-Hilbert correspondence and the analogous 
result for simple holonomic $\Dmod_A$-modules in Corollary~\ref{cor:h-simple}.
\end{proof}

\section{The structure of Fourier-Mukai transforms}

\subsection{Simple objects}

In this section, we prove a structure theorem for the Fourier-Mukai transform of
a simple holonomic $\Dmod_A$-module. 

\begin{theorem} \label{thm:simple-support}
Let $\Mmod$ be a simple holonomic $\Dmod_A$-module, and let $r \geq 0$ be the least
integer such that $\shH^r \bigl( \FM_A(\Mmod) \bigr) \neq 0$. Then there is an abelian
variety $B$ of dimension $\dim B = \dim A - r$, a surjective morphism $f \colon A \to
B$ with connected fibers, and a simple holonomic $\Dmod_B$-module $\Nmod$, such that
\[	
	\Mmod \simeq \fu \Nmod \tensor_{\OA} (L, \nabla)
\]
for a suitable point $(L, \nabla) \in \Ash$. Moreover, we have $\Supp \shH^0 \bigl(
\FM_B(\Nmod) \bigr) = \Bsh$.
\end{theorem}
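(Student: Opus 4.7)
The plan is to translate everything to the $m$-perverse side via the Fourier-Mukai transform and invoke the structure theory developed above. By Theorem~\ref{thm:t-structure}, $\FM_A$ restricts to an equivalence between the category of holonomic $\Dmod_A$-modules and the heart $\mCoh(\OAsh)$, so it sends simple objects to simple objects; hence $\FM_A(\Mmod)$ is a simple $m$-perverse coherent sheaf on $\Ash$. Since $\DA\Mmod$ is again a holonomic $\Dmod_A$-module, Theorem~\ref{thm:Laumon}\ref{en:Laumon4} shows that the $\OAsh$-dual of $\FM_A(\Mmod)$ also lies in $\mCoh(\OAsh)$, so Proposition~\ref{prop:surprise} applies: the least integer $r$ with $\shH^r \FM_A(\Mmod) \neq 0$ satisfies $\codim \Supp \shH^r \FM_A(\Mmod) = 2r$.

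Next I would pin down the support. By Corollary~\ref{cor:FM-linear}, $\Supp \FM_A(\Mmod)$ is a finite union of linear subvarieties of $\Ash$, but the classification of simple objects in perverse coherent hearts (Kashiwara, Arinkin-Bezrukavnikov) implies that a simple object has irreducible support, so $\Supp \FM_A(\Mmod)$ is a single linear subvariety $Z = (L, \nabla) \cdot \im(\fsh)$. Comparing with the codimension computed above, $Z$ must be an irreducible component of $\Supp \shH^r \FM_A(\Mmod)$ of codimension $2r$, which forces $\dim B = \dim A - r$. Using Theorem~\ref{thm:Laumon}\ref{en:Laumon1}, I would replace $\Mmod$ by $\Mmod \otimes (L, \nabla)$ (which remains simple) so that the translated support becomes $\im(\fsh)$, and reintroduce $(L, \nabla)$ by tensoring back at the end.

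The remaining, and hardest, step is descent through the closed immersion $\fsh \colon \Bsh \into \Ash$. The plan is to show that a simple object of $\mCoh(\OAsh)$ with support equal to $\im(\fsh)$ is necessarily of the form $\fsh_{\ast} F [-r]$ for a simple $m$-perverse coherent sheaf $F$ on $\Bsh$; this is the main obstacle, and relies on the classification of simples together with the perversity compatibility $m_{\Ash}(\fsh(x)) = r + m_{\Bsh}(x)$, which matches the degree shift by $[-r]$. Granted this, Theorem~\ref{thm:t-structure} identifies $F = \FM_B(\Nmod)$ for a simple holonomic $\Dmod_B$-module $\Nmod$, and then Theorem~\ref{thm:Laumon}\ref{en:Laumon3} together with the relation $\fu = \fsi[-r]$ gives
\[
	\FM_A(\fu \Nmod) = \FM_A(\fsi \Nmod)[-r] = \fsh_{\ast} \FM_B(\Nmod)[-r] = \FM_A(\Mmod),
\]
so $\Mmod \simeq \fu \Nmod$ by the Fourier-Mukai equivalence, and undoing the translation yields $\Mmod \simeq \fu \Nmod \otimes (L, \nabla)$. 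For the final assertion, $\shH^0 F = \shH^0 \FM_B(\Nmod)$ is nonzero, since $\fsh_{\ast} \shH^0 F = \shH^r \FM_A(\Mmod) \neq 0$ by the definition of $r$; hence the first non-vanishing cohomology of $\FM_B(\Nmod)$ sits in degree zero, and a second application of Proposition~\ref{prop:surprise} to $\Nmod$ forces $\codim \Supp \shH^0 \FM_B(\Nmod) = 0$, i.e., the support equals all of $\Bsh$.
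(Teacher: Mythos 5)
There is a genuine gap, and it sits exactly where you flag ``the main obstacle'': the descent of $\FM_A(\Mmod)$ through the closed immersion $\fsh \colon \Bsh \into \Ash$. The tools you propose to use for it are not available. First, Theorem~\ref{thm:t-structure} does \emph{not} say that $\FM_A$ is an equivalence between holonomic $\Dmod_A$-modules and the full heart $\mCoh(\OAsh)$ --- it only says the holonomic modules land \emph{inside} $\mCoh(\OAsh)$; identifying the essential image is precisely the open question the paper defers to Part~III. Consequently you cannot conclude that $\FM_A(\Mmod)$ is a simple object of $\mCoh(\OAsh)$: a subobject of $\FM_A(\Mmod)$ in that heart corresponds under $\FM_A^{-1}$ to some object of $\Dbcoh(\Dmod_A)$ that need not be holonomic, so simplicity of $\Mmod$ gives nothing. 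Second, even granting simplicity, the Arinkin--Bezrukavnikov/Kashiwara classification of irreducible perverse coherent sheaves (IC-extension from irreducible subvarieties, hence irreducible support) requires a \emph{strictly} monotone and comonotone perversity, and $m(x) = \lfloor \tfrac{1}{2}\codim(x) \rfloor$ is not strictly monotone (it is constant across consecutive codimensions). So neither the irreducibility of $\Supp \FM_A(\Mmod)$ nor the statement ``a simple object supported on $\im \fsh$ is $\fsh_{\ast} F[-r]$'' is justified; note also that a coherent complex set-theoretically supported on $\im\fsh$ need not be scheme-theoretically pushed forward from $\Bsh$, so this last claim is delicate even as a target.

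The paper circumvents all of this by staying on the $\Dmod$-module side. After arranging (as you do) that $\im \fsh$ is an irreducible component of $\Supp \shH^r(E)$ of codimension $2r$, it considers the adjunction morphism $\alpha \colon \Mmod \to \fsi \fp \Mmod$ and the truncation triangle for $\fp\Mmod \in \Dth{\leq r}(\Dmod_B)$, and proves that the induced map $\Mmod \to \fu \shH^r(\fp \Mmod)$ is nonzero: if it were zero, $\alpha$ would factor through $\fsi \tau_{\leq r-1}(\fp\Mmod)$, and applying $\FM_A$ would factor the adjunction $E \to \derR \fsh_{\ast} \derL (\fsh)^{\ast} E$ in degree $r$ through a sheaf supported in codimension $\geq 2$ inside $\im\fsh$, contradicting a Nakayama-type statement at the generic point (Lemma~\ref{lem:support}). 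One then picks a simple quotient-constituent $\Nmod$ of $\shH^r(\fp\Mmod)$ admitting a nonzero map $\Mmod \to \fu\Nmod$, and concludes by simplicity of both sides, using that $\fu\Nmod$ is simple (Lemma~\ref{lem:inverse-simple}). Your opening step (Proposition~\ref{prop:surprise} giving $\codim \Supp \shH^r(E) = 2r$, hence $\dim B = \dim A - r$) and your argument for the final assertion are correct and agree with the paper; it is only the middle of your proof that needs to be replaced by an argument of this kind.
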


This result clearly implies Theorem~\ref{thm:h-simple} from the introduction. Here is
the proof of the corollary about simple holonomic $\Dmod_A$-modules with Euler
characteristic zero.

\begin{proof}[Proof of Corollary~\ref{cor:h-simple}]
Let $(L, \nabla) \in \Ash$ be a generic point. Since
\[
	0 = \chi(A, \Mmod) = \chi \bigl( A, \Mmod \tensor_{\OA} (L, \nabla) \bigr)
		= \dim \HH^0 \Bigl( A, \DR_A \bigl( \Mmod \tensor_{\OA} (L, \nabla) \bigr) \Bigr),
\]
we find that the support of $\shH^0 \FM_A(\Mmod)$ is a proper subset of $\Ash$. Since
both $\FM_A(\Mmod)$ and the dual complex belong to $\mDtcoh{\leq 0}(\OAsh)$ by
Theorem~\ref{thm:t-structure}, we conclude from Proposition~\ref{prop:surprise} that
$\shH^0 \FM_A(\Mmod) = 0$. Now it only remains to apply
Theorem~\ref{thm:simple-support}.
\end{proof}

For the proof of Theorem~\ref{thm:simple-support}, we need two small lemmas. The
first describes the inverse image of a simple holonomic $\Dmod$-module.

\begin{lemma} \label{lem:inverse-simple}
Let $f \colon A \to B$ be a surjective morphism of abelian varieties, with connected
fibers. If $\Nmod$ is a simple holonomic $\Dmod_B$-module, then $\fu \Nmod$ is a
simple holonomic $\Dmod_A$-module.
\end{lemma}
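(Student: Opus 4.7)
The plan is to show that any nonzero $\Dmod_A$-submodule of $\fu \Nmod$ has the form $\fu \Nmod'$ for some $\Dmod_B$-submodule $\Nmod' \subseteq \Nmod$; simplicity of $\Nmod$ will then force $\Nmod' \in \{0, \Nmod\}$, and hence $\fu \Nmod$ is simple. Holonomicity of $\fu \Nmod$ is automatic from smoothness of $f$ by \cite{HTT}*{Theorem~3.2.3}. Since the kernel $K = \ker f$ is a connected abelian subvariety of $A$, the morphism $f \colon A \to B$ is a holomorphic principal $K$-bundle; by the implicit function theorem it admits a holomorphic section over any sufficiently small analytic open subset $U \subseteq B$, yielding an isomorphism $f^{-1}(U) \cong U \times K$.

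In such a local product model, one has $\Dmod_A \restr{f^{-1}(U)} \cong \Dmod_U \boxtimes \Dmod_K$ and $\fu \Nmod \restr{f^{-1}(U)} \cong \Nmod \restr{U} \boxtimes \shO_K$. For a $\Dmod_A$-submodule $\Mmod' \subseteq \fu \Nmod$, I would define
\[
\Nmod' = \menge{n \in \Nmod \restr{U}}{n \tensor 1 \in \Mmod'} \subseteq \Nmod \restr{U},
\]
which is preserved by $\Dmod_U$ because the $\Dmod_U$- and $\Dmod_K$-actions on $\Nmod \restr{U} \boxtimes \shO_K$ commute. Stability of $\Mmod'$ under multiplication by $\shO_K \subseteq \shO_A$ immediately gives $\Nmod' \boxtimes \shO_K \subseteq \Mmod'$, and I want to upgrade this to an equality.

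The reverse inclusion will follow from a density-type assertion. Since $K$ is smooth and connected, $\shO_K$ is a simple $\Dmod_K$-module with $\operatorname{End}_{\Dmod_K}(\shO_K) = \CC$, so $\shO_K^{\oplus n}$ is isotypic of type $\shO_K$ and its $\Dmod_K$-submodules are precisely the subspaces $W \tensor_{\CC} \shO_K$ for $\CC$-subspaces $W \subseteq \CC^n$. When $g_1, \dotsc, g_n \in \shO_K$ are linearly independent over $\CC$, the condition $(g_1, \dotsc, g_n) \in W \tensor \shO_K$ forces $W = \CC^n$, and so the $\Dmod_K$-submodule of $\shO_K^{\oplus n}$ generated by $(g_1, \dotsc, g_n)$ equals $\shO_K^{\oplus n}$; in particular there exist operators $P_1, \dotsc, P_n \in \Dmod_K$ with $P_j g_i = \delta_{ij}$. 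Writing any $m \in \Mmod'$ as $\sum_i n_i \tensor g_i$ with linearly independent $g_i$ and applying $P_j$ yields $n_j \tensor 1 \in \Mmod'$, hence $n_j \in \Nmod'$ and $m \in \Nmod' \boxtimes \shO_K$. By uniqueness the local $\Nmod'$ glue to a global $\Dmod_B$-submodule of $\Nmod$ with $\fu \Nmod' = \Mmod'$, completing the proof.

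The main technical difficulty will be the density-type fact above, whose proof I would package through the isotypic decomposition of $\shO_K^{\oplus n}$; once reduced to that classification of submodules it is elementary, but it is the only part of the argument that is not essentially formal. A cleaner alternative, restricted to the regular holonomic setting, would be to reduce via the Riemann-Hilbert correspondence to the perverse sheaf analogue, using that $f^{-1}(V) \to V$ is a fibration with connected fiber $K$ for any smooth locally closed $V \subseteq B$, so $\pi_1(f^{-1}(V)) \to \pi_1(V)$ is surjective and pullback preserves irreducibility of representations of the fundamental group.
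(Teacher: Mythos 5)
Your strategy---trivialize $f$ analytically over small opens $U \subseteq B$, identify $\fu\Nmod$ with $\Nmod\restr{U} \boxtimes \shO_K$, and show that every $\Dmod$-submodule of such an external product is of the form $\Nmod' \boxtimes \shO_K$---is genuinely different from the paper's. The paper instead invokes the classification of simple holonomic modules as minimal extensions \cite{HTT}*{Theorem~3.4.2}: it writes $\Nmod$ as the minimal extension of an irreducible integrable connection on a locally closed $U \subseteq B$, uses surjectivity of $\pi_1(f^{-1}(U)) \to \pi_1(U)$ to see that the pulled-back connection is still irreducible, and then verifies the minimal-extension property of $\fu\Nmod$ by an adjunction argument for $\fp$ and $\fsi$ (with duality handling quotients). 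Since that classification holds for arbitrary, possibly irregular, holonomic modules, the paper's proof covers the general case; your ``cleaner alternative'' via Riemann--Hilbert does not, and the irregular case is genuinely needed here because Theorem~\ref{thm:simple-support} concerns all simple holonomic $\Dmod_A$-modules.

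The gap in your main argument is the step ``write any $m \in \Mmod'$ as $\sum_i n_i \tensor g_i$ with linearly independent $g_i$.'' A local section of $\Nmod\restr{U} \boxtimes \shO_K$, i.e.\ of $\shO_{U \times K} \tensor_{p_1^{-1}\shO_U} p_1^{-1}\bigl(\Nmod\restr{U}\bigr)$, is in general \emph{not} a finite sum of pure tensors: already for $\Nmod = \shO_B$ a holomorphic function on an open subset of $U \times K$ need not be a finite sum $\sum_i n_i(u)\,g_i(y)$. So the reduction to the classification of $\Dmod_K$-submodules of $\shO_K^{\oplus n}$ (which is itself correct) does not apply to arbitrary sections of $\Mmod'$, and the operators $P_j$ with $P_j g_i = \delta_{ij}$ are only available for honest finite pure-tensor combinations. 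The statement you actually need---every coherent $\Dmod_{U \times K}$-submodule of $M \boxtimes \shO_K$ equals $M' \boxtimes \shO_K$ for $K$ connected---is true, but it is precisely the hard point and requires a real argument (for instance, showing that such a submodule is generated by its sections that are flat in the $K$-direction, or exploiting properness of $K$ and the adjunction between inverse and direct image along the projection together with $H^0_{\mathrm{dR}}(K) = \CC$); the sketch offered does not establish it. A minor additional point: $\Mmod'$ is an algebraic submodule while the trivialization $f^{-1}(U) \cong U \times K$ exists only analytically, so one should pass through GAGA on the projective varieties $A$ and $B$ when transporting the conclusion back to the algebraic category.
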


\begin{proof}
Since $f$ is smooth, $\fu \Nmod = \OA \tensor_{f^{-1} \shO_B} f^{-1} \Nmod$ is a
holonomic $\Dmod_A$-module. By the classification of simple holonomic
$\Dmod$-modules \cite{HTT}*{Theorem~3.4.2}, there is a locally closed subvariety $U
\subseteq B$, and an irreducible representation $\rho \colon \pi_1(U) \to \GL(V)$,
such that $\Nmod$ is the minimal extension of the integrable connection on $U$
associated to $\rho$.  Now $f$ has connected fibers, and so the map on fundamental
groups
\[
	\fl \colon \pi_1 \bigl( f^{-1}(U) \bigr) \to \pi_1(U)
\]
is surjective. Clearly, the pullback $\fu \Nmod$ is equal, over $f^{-1}(U)$, to the
integrable connection associated to the representation $\rho \circ \fl \colon \pi_1
\bigl( f^{-1}(U) \bigr) \to \pi_1(U) \to \GL(V)$. This representation is still 
irreducible because $\fl$ is surjective; to conclude the proof, we shall argue that
$\fu \Nmod$ is the minimal extension.

By \cite{HTT}*{Theorem~3.4.2}, it suffices to show that $\fu \Nmod$ has no
submodules or quotient modules that are supported outside of $f^{-1}(U)$. Suppose
that $\Mmod \into \fu \Nmod$ is such a submodule. We have $\fsi \Nmod = \fu \Nmod
\decal{r}$, where $r = \dim A - \dim B$; by adjunction, the morphism $\Mmod \into \fu
\Nmod$ corresponds to a morphism $\fp \Mmod \decal{r} \to \Nmod$, which factors
uniquely as
\[
	\fp \Mmod \decal{r} \to \shH^r \fp \Mmod \to \Nmod.
\]
Since $\shH^r \fp \Mmod$ is supported outside of $U$, this morphism must be zero;
consequently, $\Mmod = 0$. A similar result for quotient modules can be derived by
applying the duality functor, using \cite{HTT}*{Theorem~2.7.1}. This shows that $\fu
\Nmod$ is the minimal extension of a simple integrable connection, and hence simple.
\end{proof}

The second lemma deals with restriction to an irreducible component of the support of
a complex.

\begin{lemma} \label{lem:support}
Let $X$ be a scheme, and let $E \in \Dbcoh(\OX)$. Suppose that $Z$ is an irreducible
component of the support of $\shH^r(E)$, but not of any $\shH^i(E)$ with $i > r$. Let
$i \colon Z \into X$ be the inclusion. Then the morphism
\[
	\shH^r(E) \to \shH^r \bigl( \derR \il \derL \iu E \bigr)
\]
induced by adjunction is nonzero at the generic point of $Z$.
\end{lemma}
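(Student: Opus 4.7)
The plan is to localize at the generic point $\eta$ of $Z$, identify the adjunction map with a natural quotient of modules, and finish via Nakayama's lemma. I would set $R = \shO_{X, \eta}$, with maximal ideal $\mm$ and residue field $\kappa = \kappa(\eta)$, and $F = E_\eta \in \Dbcoh(R)$. Near $\eta$ the closed subscheme $Z$ is cut out by $\mm$, so the closed immersion $i$ localizes to $\Spec \kappa \hookrightarrow \Spec R$; hence $\derL \iu E$ at the stalk $\eta$ is represented by $F \Ltensor_R \kappa$, and $\derR \il$ is exact and takes this complex back to the same complex of $R$-modules via $R \twoheadrightarrow \kappa$. Thus the adjunction map at the stalk $\eta$ is simply the natural map $F \to F \Ltensor_R \kappa$ induced by the quotient $R \twoheadrightarrow \kappa$.

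Now I would read off two facts from the hypothesis. Write $M^i = \shH^i(F) = \shH^i(E)_\eta$. Since $\eta$ is the generic point of the irreducible component $Z$ of $\Supp \shH^r(E)$, any point of $\Spec R$ lying in $\Supp M^r$ either equals $\eta$ or would give a closed irreducible subset of $\Supp \shH^r(E)$ strictly containing $Z$, contradicting maximality; therefore $M^r$ is a nonzero finitely generated $R$-module of finite length, supported at the closed point. The hypothesis that $Z$ is not contained in the support of $\shH^i(E)$ for $i > r$ (the reading needed for the lemma, and the one that applies in the intended use, where $Z$ is taken of maximal dimension among components of $\Supp \shH^r(E)$) forces $M^i = 0$ for $i > r$, so $F$ has cohomology only in degrees $\leq r$.

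Finally, I would compute $\shH^r(F \Ltensor_R \kappa)$ using the standard convergent spectral sequence
\[
E_2^{p, q} = \Tor_{-p}^R \bigl( M^q, \kappa \bigr) \Longrightarrow \shH^{p + q}(F \Ltensor_R \kappa), \qquad p \leq 0.
\]
For the target degree $n = r$, the only pair $(p, q)$ with $p + q = r$, $p \leq 0$, $q \leq r$, and $M^q \neq 0$ is $(0, r)$; that entry admits no incoming and no outgoing differentials, so $\shH^r(F \Ltensor_R \kappa) \simeq M^r \tensor_R \kappa$ and the adjunction map on $\shH^r$ is identified with the canonical surjection $M^r \twoheadrightarrow M^r / \mm M^r$. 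By Nakayama's lemma this surjection lands in a nonzero module, which gives the desired nonvanishing at $\eta$. The crux is the collapse of the spectral sequence forced by the hypothesis: once $F$ is known to have no cohomology above degree $r$ at the generic stalk, the edge map in degree $r$ becomes an isomorphism and the conclusion reduces to Nakayama; the remaining steps are just routine derived-functor bookkeeping at the stalk.
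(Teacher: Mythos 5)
Your proof is correct and follows essentially the same route as the paper's: localize at the generic point of $Z$, identify the adjunction map in degree $r$ with the quotient $M \to M/\mm M$, and conclude by Nakayama. The only cosmetic difference is that you compute $\shH^r\bigl(F \Ltensor_R \kappa\bigr)$ via the Tor spectral sequence, whereas the paper represents the localized complex by finitely generated free modules vanishing in degrees above $r$ and reads off the same identification directly; your explicit observation that the hypothesis must be interpreted as forcing $\shH^i(E)_\eta = 0$ for $i > r$ (which indeed holds in the lemma's application) is a point the paper leaves implicit.
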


\begin{proof}
After localizing at the generic point of $Z$, we may assume that $X = \Spec R$ for a
local ring $(R, \mm)$, and that $E \in \Dbcoh(R)$ is represented by a complex
\begin{diagram*}{1.5em}{1.5em}
\matrix[math] (m) { \dotsb & E^{r-2} & E^{r-1} & E^r \\ };
\path[to] (m-1-1) edge (m-1-2);
\path[to] (m-1-2) edge node[above] {$d$} (m-1-3);
\path[to] (m-1-3) edge node[above] {$d$} (m-1-4);
\end{diagram*}
of finitely generated free $R$-modules. Set $M = \shH^r(E) = E^r / d E^{r-1}$, which
is a finitely generated $R$-module with $M \neq 0$. Then $\shH^r \bigl( \derR \il \derL
\iu E \bigr) \simeq M / \mm M$, and the morphism $M \to M / \mm M$ is
nonzero by Nakayama's lemma.
\end{proof}

We can now prove our structure theorem for simple holonomic $\Dmod_A$-modules.

\begin{proof}[Proof of Theorem~\ref{thm:simple-support}] 
Let $E = \FM_A(\Mmod) \in \Dbcoh(\OAsh)$. Theorem~\ref{thm:t-structure} shows that $E
\in \mCoh(\OAsh)$; by duality, it follows that $\derR \shHom(E, \OAsh) \in
\mCoh(\OAsh)$, too. According to Proposition~\ref{prop:surprise}, the codimension of
the support of $\shH^r(E)$ is therefore equal to $2r$; moreover, each irreducible
component of $\Supp \shH^r(E)$ of codimension $2r$ is also an irreducible component
of $S^r(\Mmod)$ by \eqref{eq:base-change}. After tensoring $\Mmod$ by a suitable line
bundle with integrable connection, we may assume that one irreducible component of
the support of $\shH^r(E)$ is equal to $\im \fsh$, for a surjective morphism of
abelian varieties $f \colon A \to B$ with connected fibers and $\dim B = \dim A - r$.

To produce the required simple $\Dmod_B$-module, consider the direct image $\fp
\Mmod$, which belongs to $\Dth{\leq r}(\Dmod_B)$. We have a distinguished triangle
\[
	\tau_{\leq r-1}(\fp \Mmod) \to \fp \Mmod \to \shH^r(\fp \Mmod) \decal{-r} \to 
		\dotsb
\]
in $\Dbh(\Dmod_B)$, and hence also a distinguished triangle
\begin{equation} \label{eq:triangle}
	\fsi \tau_{\leq r-1}(\fp \Mmod) \to \fsi \fp \Mmod \to 
		\fsi \shH^r(\fp \Mmod) \decal{-r} \to \dotsb
\end{equation}
in $\Dbh(\Dmod_A)$. Since $f$ is smooth, $\fsi \shH^r(\fp \Mmod) \decal{-r} = \fu
\shH^r(\fp \Mmod)$ is a single holonomic $\Dmod_A$-module. Let $\alpha \colon \Mmod
\to \fsi \fp \Mmod$ be the adjunction morphism. 

Now we observe that the induced morphism $\Mmod \to \fu \shH^r(\fp \Mmod)$ must be 
nonzero. Indeed, suppose to the contrary that the morphism was zero.  Then $\alpha$
factors as
\[
	\Mmod \to \fsi \tau_{\leq r-1}(\fp \Mmod) \to \fsi \fp \Mmod.
\]
If we apply the Fourier-Mukai transform to this factorization, and use the properties
in Theorem~\ref{thm:Laumon}, we obtain
\[
	E \to \derR \fsh_{\ast} \FM_B \bigl( \tau_{\leq r-1}(\fp \Mmod) \bigr) \to 
		\derR \fsh_{\ast} \derL(\fsh)^{\ast} E,
\]
which is a factorization of the adjunction morphism for the closed embedding $\fsh$.
In particular, we then have
\[
	\shH^r(E) \to \shH^r \Bigl( \derR \fsh_{\ast} \FM_B \bigl( \tau_{\leq r-1}(\fp
		\Mmod) \bigr) \Bigr) 
		\to \shH^r \bigl( \derR \fsh_{\ast} \derL(\fsh)^{\ast} E \bigr);
\]
but because the coherent sheaf in the middle is supported in a subset of $\im \fsh$ of
codimension at least two, this contradicts Lemma~\ref{lem:support}. Therefore, $\Mmod
\to \fu \shH^r(\fp \Mmod)$ is indeed nonzero. 

Being a holonomic $\Dmod_B$-module, $\shH^r(\fp \Mmod)$ admits a finite filtration
with simple quotients; consequently, we can find a simple holonomic $\Dmod_B$-module
$\Nmod$ and a nonzero morphism $\Mmod \to \fu \Nmod$. Since $\Mmod$ is simple, and
$\fu \Nmod$ is also simple by Lemma~\ref{lem:inverse-simple}, the morphism must be an
isomorphism, and so $\Mmod \simeq \fu \Nmod$.

To prove the final assertion, note that $\fu \Nmod = \fsi \Nmod \decal{-r}$; on 
account of Theorem~\ref{thm:Laumon}, we therefore have
\[
	\FM_A(\Mmod) \simeq \FM_A(\fu \Nmod) \simeq 
		\derR \fsh_{\ast} \FM_B(\Nmod) \decal{-r} .
\]
Since $\im \fsh$ is an irreducible component of the support of $\shH^r \bigl(
\FM_A(\Mmod) \bigr)$, it follows that $\Supp \shH^0 \bigl( \FM_B(\Nmod) \bigr) =
\Bsh$, as claimed.
\end{proof}

\subsection{Chern characters}

The purpose of this section is to compute the algebraic Chern character of
$\FM_A(\Mmod)$, for $\Mmod$ a holonomic $\Dmod_A$-module. 

For a smooth algebraic variety $X$, we denote by $\CH(X)$ the algebraic Chow ring of
$X$. To begin with, observe that since $\pi \colon \Ash \to \Ah$ is an affine bundle
in the Zariski topology, the pullback map $\pi^{\ast} \colon \CH(\Ah) \to \CH(\Ash)$
is an isomorphism.

\begin{proposition} \label{prop:Chern}
Let $\Mmod$ be a holonomic $\Dmod_A$-module. Then the algebraic Chern character of
the Fourier-Mukai transform $\FM_A(\Mmod)$ lies in the subring of $\CH(\Ah)$
generated by $\CH_1^1(\Ah) = \Pic^0(\Ah)$.
\end{proposition}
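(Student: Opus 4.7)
The plan is to exploit the Rees-algebra Fourier-Mukai transform from Section~B to interpolate between $\FM_A(\Mmod)$ on $\Ash$ and the classical sheaf-theoretic Fourier-Mukai transform on $\Ah$. Since $\pi \colon E(A) \to \Ah$ is a vector bundle, $\pi^{\ast}$ is an isomorphism on rational Chow rings, so for any good filtration $F$ on $\Mmod$ (which exists by \cite{HTT}*{Theorem~2.3.1}) the class $\ch \bigl( \FMt_A(R_F \Mmod) \bigr)$ has the form $\pi^{\ast} c$ for a unique $c \in \CH(\Ah)_{\QQ}$. Each of the fibers $\lambda^{-1}(1) = \Ash$ and $\lambda^{-1}(0) = \Ah \times H^0(A, \Omega_A^1)$ is an affine-space bundle over $\Ah$, so the same $c$ is recovered by restriction to either one; Proposition~\ref{prop:compatible}\ref{en:FM-R-ii} identifies $\ch(\FM_A(\Mmod))$ with $\pi^{\ast} c$, reducing the claim to showing that $c$ lies in the subring of $\CH(\Ah)$ generated by $\Pic^0(\Ah)$.

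To compute $c$, apply Proposition~\ref{prop:compatible}\ref{en:FM-R-iii} and flat base change along the zero section $\Ah \hookrightarrow \Ah \times H^0(A, \Omega_A^1)$. A direct calculation identifies
\[
c = \ch \bigl( \derR \Phi_P(\shG) \bigr),
\]
where $\Phi_P$ is the classical Fourier-Mukai transform for $(A, \Ah)$ and $\shG = \Omega_A^g \otimes \derL i_0^{\ast} \shC(\Mmod, F) \in \Dbcoh(\OA)$, with $i_0 \colon A \hookrightarrow T^{\ast} A$ the zero section. Since $\td(T_A) = 1$, Grothendieck-Riemann-Roch gives
\[
c = q_{\ast} \bigl( \exp(c_1 P) \cdot p^{\ast} \ch(\shG) \bigr),
\]
where $p, q$ are the projections from $A \times \Ah$.

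The next step invokes Beauville's analysis of the Fourier-Mukai transform on the Chow ring of an abelian variety, which shows that $\Phi_P$ maps the top-codimension piece $\CH^g(A)_{\QQ}$ into the subring of $\CH(\Ah)$ generated by $\Pic^0(\Ah)$. (For a skyscraper $\shO_a$ this is manifest: $\Phi_P(\shO_a)$ is the degree-zero line bundle on $\Ah$ corresponding to $a$, whose Chern character $\exp(c_1)$ lies in the subring; the general case follows from linearity and Beauville's compatibility of $\Phi_P$ with the weight grading.) Thus it suffices to show that $\ch(\shG) \in \CH^g(A)_{\QQ}$. This is where holonomicity enters: the characteristic variety $\Supp \shC(\Mmod, F)$ is Lagrangian of dimension $g$ in the $2g$-dimensional $T^{\ast} A$, and an excess-intersection computation for $i_0$ — equivalently, tensoring $\shC(\Mmod, F)$ with the Koszul resolution of $\OA$ over $\shO_{T^{\ast} A}$ — should force the Chern character of $\derL i_0^{\ast} \shC(\Mmod, F)$ to sit in the top-codimension piece.

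The main obstacle will be this last codimension statement. A naive restriction of a coherent sheaf of dimension $g$ in $T^{\ast} A$ to the zero section can produce classes in every codimension, so the Lagrangian condition must be exploited to ensure that the top Chern class of the excess normal bundle absorbs the dimension mismatch cleanly. The cleanest test case is $\Mmod = \OA$ with its canonical connection, where $\shG$ is the derived self-intersection of the zero section and $\ch(\shG) = \sum_{k=0}^g (-1)^k \binom{g}{k} = 0$ trivially lies in $\CH^g$; the general case should follow by decomposing a holonomic characteristic cycle into its conormal components and applying the same Koszul argument stratum by stratum.
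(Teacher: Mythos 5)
Your overall strategy coincides with the paper's: pass to the Rees module, use the isomorphisms $\CH(\Ah) \simeq \CH\bigl(E(A)\bigr) \simeq \CH(\Ash)$, and compute the class on the fiber $\lambda^{-1}(0) = \Ah \times H^0(A,\Omega_A^1)$ via Proposition~\ref{prop:compatible}. The divergence comes at the last step, exactly where you place your ``main obstacle.'' The paper does \emph{not} restrict to the zero section $\omega = 0$; it restricts to $\Ah \times \{\omega\}$ for a \emph{general} $\omega \in H^0(A,\Omega_A^1)$. Since $\Supp \shC(\Mmod,F)$ has pure dimension $g$ by holonomicity, its intersection with the $g$-dimensional slice $A \times \{\omega\}$ is zero-dimensional (or empty) for general $\omega$, so the restriction is an honest coherent sheaf with zero-dimensional support, and its classical Fourier--Mukai transform has Chern character in the subring generated by $\Pic^0(\Ah)$ by exactly the skyscraper computation you give. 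Because the Chern character of the restriction of $\FMt_A(R_F\Mmod)$ to $\lambda^{-1}(0)$ is pulled back from $\Ah$, any one slice suffices, and choosing it generically makes the derived restriction, the excess-intersection computation, and the Lagrangian geometry all unnecessary.

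As written, your argument is incomplete at the step you flag: you assert that holonomicity ``should force'' $\ch\bigl(\derL i_0^{\ast} \shC(\Mmod,F)\bigr)$ into $\CH^g(A)_{\QQ}$, and the mechanism you propose (decomposing the characteristic cycle into conormal components and running a Koszul computation stratum by stratum) is not a proof --- $\shC(\Mmod,F)$ is not a direct sum of structure sheaves of conormal varieties, and the conic/Lagrangian structure is in fact irrelevant. The statement is nonetheless true and has a clean proof using only the dimension of the support: since $\Supp\shC(\Mmod,F)$ has codimension $g$ in $T^{\ast}A$, the class $\ch\bigl(\shC(\Mmod,F)\bigr)$ lies in $\bigoplus_{j \geq g}\CH^j(T^{\ast}A)_{\QQ}$ by the standard compatibility of the Chern character with the filtration by codimension of support, and $i_0^{\ast}$ preserves the grading by codimension, so $\ch\bigl(\derL i_0^{\ast}\shC(\Mmod,F)\bigr) = i_0^{\ast}\ch\bigl(\shC(\Mmod,F)\bigr) \in \CH^{\geq g}(A)_{\QQ} = \CH^g(A)_{\QQ}$. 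With that lemma supplied your route closes as well (the reduction to $\lambda^{-1}(0)$, the base change, the GRR step, and the linearity argument over skyscrapers are all fine), but it is strictly more work than the paper's choice of a generic $\omega$.
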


\begin{proof}
Since $\pi \colon E(A) \to \Ah$ is an algebraic vector bundle containing $\Ash =
\lambda^{-1}(1)$, pullback of cycles induces isomorphisms
\[
	\CH(\Ah) \simeq \CH \bigl( E(A) \bigr) \simeq \CH(\Ash).
\]
As in the proof of Proposition~\ref{prop:KW}, choose a good filtration $F_{\bullet}
\Mmod$ and consider the Fourier-Mukai transform $\FMt_A(R_F \Mmod)$ of the associated
Rees module. Its restriction to $\Ash$ is isomorphic to $\FM_A(\Mmod)$, and so it
suffices to show that the Chern character of $\FMt_A(R_F \Mmod)$ is contained in the
subring generated by $\Pic^0(\Ah)$. Since $\lambda^{-1}(0) = \Ah \times H^0(A,
\Omega_A^1)$, we only need to prove this after restricting to $\Ah \times \{\omega\} \subseteq
\lambda^{-1}(0)$, for any choice of $\omega \in H^0(A, \Omega_A^1)$.

By Proposition~\ref{prop:compatible}, the restriction of $\FMt_A(R_F \Mmod)$ to
$\lambda^{-1}(0)$ is isomorphic to 
\begin{equation} \label{eq:FM-temp}
	\derR (p_{23})_{\ast} \Bigl( p_{12}^{\ast} P \tensor p_1^{\ast} \Omega_A^g
			\tensor p_{13}^{\ast} \shC(\Mmod, F) \Bigr).
\end{equation}
Since $\Mmod$ is holonomic, the support of $\shC(\Mmod, F)$ is of pure dimension $g$.
Now choose $\omega \in H^0(A, \Omega_A^1)$ general enough that the restriction of
$\shC(\Mmod, F)$ to $A \times \{\omega\}$ is a coherent sheaf with zero-dimensional
support. The restriction of \eqref{eq:FM-temp} to $\Ah \times \{\omega\}$ is then
the Fourier-Mukai transform of a coherent sheaf on $A$ with zero-dimensional support;
its algebraic Chern character must therefore be contained in the subring of
$\CH(\Ah)$ generated by $A = \Pic^0(\Ah)$.
\end{proof}

\begin{corollary}
Let $\Mmod$ be a holonomic $\Dmod_A$-module. Then all the Chern classes of
$\FM_A(\Mmod)$ are zero in the singular cohomology ring of $\Ash$.
\end{corollary}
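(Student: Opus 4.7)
The plan is to deduce the corollary from Proposition~\ref{prop:Chern} together with the classical fact that divisor classes algebraically equivalent to zero are homologically trivial. Proposition~\ref{prop:Chern} asserts that the algebraic Chern character $\ch(\FM_A(\Mmod))$ lies in the subring $S \subseteq \CH(\Ah)_{\QQ}$ generated by $\CH_1^1(\Ah) = \Pic^0(\Ah)$. Since $\pi \colon \Ash \to \Ah$ is an affine bundle (in particular, a homotopy equivalence), pullback induces isomorphisms both $\pi^{\ast} \colon \CH(\Ah) \to \CH(\Ash)$ and $\pi^{\ast} \colon H^{\ast}(\Ah, \ZZ) \to H^{\ast}(\Ash, \ZZ)$, and these are compatible with the cycle class map. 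Thus it is enough to analyze the image of $S$ in $H^{\ast}(\Ah, \ZZ)$.

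Next I would observe that the cycle class map $\mathrm{cl} \colon \CH^1(\Ah) \to H^2(\Ah, \ZZ)$ annihilates $\Pic^0(\Ah)$: algebraic equivalence implies homological equivalence, so every element of $\Pic^0(\Ah)$ is represented by the zero class in singular cohomology. Because $\mathrm{cl}$ is a ring homomorphism, the image of the subring $S$ in $H^{\ast}(\Ah, \QQ)$ is the subring generated by zero, i.e.\ the degree-zero piece $\QQ \cdot \lbrack \Ah \rbrack$. Consequently, the image of $\ch(\FM_A(\Mmod))$ in $H^{\ast}(\Ash, \QQ)$ is concentrated in degree zero and is equal to the rank, so $\ch_i(\FM_A(\Mmod)) = 0$ in $H^{2i}(\Ash, \QQ)$ for every $i \geq 1$.

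Finally, the universal polynomial relations expressing the Chern classes in terms of the components of the Chern character (Newton's identities) force $c_i(\FM_A(\Mmod)) = 0$ in $H^{2i}(\Ash, \QQ)$ for all $i \geq 1$. Since $\Ash$ is homotopy equivalent to the complex torus $\Ah$, its integral cohomology $H^{\ast}(\Ash, \ZZ) \cong H^{\ast}(\Ah, \ZZ)$ is torsion-free, and hence vanishing of the $c_i$ with rational coefficients upgrades automatically to vanishing with integer coefficients. There is no real obstacle: the content of the argument is simply that Proposition~\ref{prop:Chern} locates $\ch(\FM_A(\Mmod))$ inside the kernel of the cycle class map in all positive degrees.
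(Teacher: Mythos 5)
Your argument is correct and is exactly the deduction the paper intends (the corollary is stated without proof as an immediate consequence of Proposition~\ref{prop:Chern}): the cycle class map kills $\Pic^0(\Ah)$, so the image of the Chern character in $H^{\ast}(\Ash,\QQ)$ is concentrated in degree zero, and Newton's identities plus the torsion-freeness of $H^{\ast}(\Ash,\ZZ)$ give the vanishing of all $c_i$, $i\geq 1$. No gaps.
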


\section*{References}

\begin{biblist}
\bib{Arapura}{article}{
	author={Arapura, Donu},
	title={Higgs line bundles, Green-Lazarsfeld sets, and maps of K\"ahler manifolds
		to curves},
	journal={Bull. Amer. Math. Soc. (N.S.)},
	volume={26},
	date={1992},	
	number={2},
	pages={310--314},
}
\bib{AB}{article}{
	author={Arinkin, Dima},
	author={Bezrukavnikov, Roman},
	title={Perverse coherent sheaves},
	journal={Mosc. Math. J.},
	volume={10},
	date={2010}, 
	number={1},
	pages={3--29},
}
\bib{BBD}{article}{
   author={Be{\u\i}linson, A. A.},
   author={Bernstein, J.},
   author={Deligne, P.},
   title={Faisceaux pervers},
   language={French},
   conference={
      title={Analysis and topology on singular spaces, I},
      address={Luminy},
      date={1981},
   },
   book={
      series={Ast\'erisque},
      volume={100},
      publisher={Soc. Math. France},
      place={Paris},
   },
   date={1982},
   pages={5--171},
}
\bib{Bonsdorff}{article}{
   author={Bonsdorff, Juhani},
   title={Autodual connection in the Fourier transform of a Higgs bundle},
   journal={Asian J. Math.},
   volume={14},
   date={2010},
   number={2},
   pages={153--173},
}
\bib{Dimca}{book}{
   author={Dimca, Alexandru},
   title={Sheaves in topology},
   series={Universitext},
   publisher={Springer-Verlag},
   place={Berlin},
   date={2004},
   pages={xvi+236},
}
\bib{FK}{article}{
   author={Franecki, J.},
   author={Kapranov, M.},
   title={The Gauss map and a noncompact Riemann-Roch formula for
   constructible sheaves on semiabelian varieties},
   journal={Duke Math. J.},
   volume={104},
   date={2000},
   number={1},
   pages={171--180},
}
\bib{GL1}{article}{
   author={Green, Mark},
   author={Lazarsfeld, Robert},
   title={Deformation theory, generic vanishing theorems, and some
   conjectures of Enriques, Catanese and Beauville},
   journal={Invent. Math.},
   volume={90},
   date={1987},
   number={2},
   pages={389--407},
}
\bib{GL2}{article}{
   author={Green, Mark},
   author={Lazarsfeld, Robert},
   title={Higher obstructions to deforming cohomology groups of line bundles},
   journal={J. Amer. Math. Soc.},
   volume={1},
   date={1991},
   number={4},
   pages={87--103},
}
\bib{HTT}{book}{
   author={Hotta, Ryoshi},
   author={Takeuchi, Kiyoshi},
   author={Tanisaki, Toshiyuki},
   title={$D$-modules, perverse sheaves, and representation theory},
   series={Progress in Mathematics},
   volume={236},
   publisher={Birkh\"auser Boston Inc.},
   place={Boston, MA},
   date={2008},
   pages={xii+407},
}
\bib{Kashiwara}{article}{
   author={Kashiwara, Masaki},
	title={t-structures on the derived categories of holonomic $\scr D$-modules and
		coherent $\scr O$-modules},
   journal={Moscow Math. J.},
   volume={4},
   date={2004},
   number={4},
   pages={847--868},
}
\bib{KW}{article}{
	author={Kr\"amer, Thomas},
	author={Weissauer, Rainer},
	title={Vanishing theorems for constructible sheaves on abelian varieties},
	date={2011},
	eprint={arXiv:1111.4947v3},
}
\bib{Laumon}{article}{
   author={Laumon, G\'erard},
   title={Transformation de Fourier g\'en\'eralis\'ee},
   eprint={arXiv:alg-geom/9603004},
	 date={1996}
}
\bib{MM}{book}{
	author={Mazur, Barry},
	author={Messing, William},
	title={Universal Extensions and One Dimensional Crystalline Cohomology},
	series={Lecture Notes in Math.},
   volume={370},
   publisher={Springer-Verlag},
   place={Berlin},
   date={1974},
}
\bib{Mochizuki}{article}{
	author={Mochizuki, Takuro},
	title={Holonomic D-module with Betti structure},
	eprint={arXiv:1001.2336v3},
	date={2010},
}
\bib{PR}{article}{
   author={Polishchuk, A.},
   author={Rothstein, M.},
   title={Fourier transform for $D$-algebras. I},
   journal={Duke Math. J.},
   volume={109},
   date={2001},
   number={1},
   pages={123--146},
}
\bib{PS}{article}{
	author={Popa, Mihnea},
	author={Schnell, Christian},
	title={Generic vanishing theory via mixed Hodge modules},
	eprint={arXiv:1112.3058},
	date={2011},
}
\bib{Rothstein}{article}{
   author={Rothstein, Mitchell},
   title={Sheaves with connection on abelian varieties},
   journal={Duke Math. J.},
   volume={84},
   date={1996},
   number={3},
   pages={565--598}
}
\bib{Saito-MM}{article}{
   author={Saito, Morihiko},
   title={Hodge conjecture and mixed motives. I},
   conference={
      title={Complex geometry and Lie theory},
      address={Sundance, UT},
      date={1989},
   },
   book={
      series={Proc. Sympos. Pure Math.},
      volume={53},
      publisher={Amer. Math. Soc.},
      place={Providence, RI},
   },
   date={1991},
   pages={283--303},
}
\bib{Simpson}{article}{
   author={Simpson, Carlos},
   title={Subspaces of moduli spaces of rank one local systems},
   journal={Ann. Sci. ENS},
   volume={26},
   date={1993},
   pages={361--401},
}
\bib{Simpson-mod1}{article}{
   author={Simpson, Carlos T.},
   title={Moduli of representations of the fundamental group of a smooth
   projective variety. I},
   journal={Inst. Hautes \'Etudes Sci. Publ. Math.},
   number={79},
   date={1994},
   pages={47--129},
}
\bib{Watts}{article}{
	author={Watts, Charles E.},
	title={Intrinsic characterizations of some additive functors}, 
	journal={Proc. Amer. Math. Soc.},
	volume={11},
	date={1960},
	pages={5--8},
}
\bib{Weissauer}{article}{
	author={Weissauer, Rainer},
	title={Degenerate Perverse Sheaves on Abelian Varieties},
	date={2012},
	eprint={arXiv:1204.2247},
}
\end{biblist}

\end{document}